\newtheorem{proposition}{Proposition}[section]
\newcommand{\R}{\mathbb{R}}
\newcommand{\N}{\mathcal{N}}
\newcommand{\x}{\mathbf{x}}
\newcommand{\Z}{\mathbb{Z}}
\newcommand{\B}{\mathcal{B}}
\newcommand{\pa}{\partial}
\newcommand{\ve}{\varepsilon}
\newcommand{\rmd}{{\rm d}}
\newcommand{\eee}{equation}
\newcommand{\be}{\begin{\eee}}
\newcommand{\ee}{\end{\eee}}
\numberwithin{equation}{section}
\newtheorem{theorem}{Theorem}
\numberwithin{theorem}{section}
\newtheorem{question}{Question}
\numberwithin{prop}{section}
\numberwithin{cor}{section}
\newtheorem{lemma}{Lemma}
\numberwithin{lemma}{section}
\theoremstyle{remark}
\newtheorem*{remark}{Remark}
\theoremstyle{definition}
\title{\vspace*{-25mm} On the fragility of laminar flow}
\author{Theodore D. Drivas}
\address{Department of Mathematics, Stony Brook University, Stony Brook, NY, 11790}
\email{tdrivas@math.stonybrook.edu}
\author{Daniel Ginsberg}
\address{Department of Mathematics, Brooklyn College (CUNY), Brooklyn, NY 11210, USA}
\email{daniel.ginsberg@brooklyn.cuny.edu}
\author{Marc Nualart}
\address{Instituto de Ciencias Matem\'{a}ticas, Consejo Superior de Investigaciones Cient\'{i}ficas, 28049 Madrid, Spain}
\email{marc.nualart@icmat.es}
\begin{document}
\maketitle
\vspace{-5mm}
\begin{abstract}
Inviscid laminar flow is a stationary solution of the incompressible Euler equations whose streamlines foliate the fluid domain.  Their structure on symmetric domains is rigid:
all laminar flows occupying straight periodic channels are shear and on regular annuli they are circular \cite{drivas2024geometric}.  Laminarity can persist to slight deformations of these domains provided the base flow is Arnold stable and \textit{non-stagnant} (non-vanishing velocity) \cite{CDG21}. On the other hand, flows with \textit{trivial net momentum} (and thus stagnate) break laminarity by developing islands (regions of contractible streamlines) on all non-flat periodic channels with up/down reflection symmetry \cite{drivas2024islands}.
Here, we show that stable steady states occupying \textit{generic} channels or annuli and stagnate must have islands. Additionally, when the domain is close to symmetric, we characterize the size of the islands, showing that they scale as the square root of the boundary's deviation from flat.  Taken together, these results show that dynamically stable laminar flows are structurally unstable whenever they stagnate.
\end{abstract}

\section{Introduction}
In this paper, we are interested in the existence of islands (regions of contractible streamlines) for steady fluid flow  confined in periodic domains
\begin{equation}
D_{G,H} := \lbrace (x,y)  : \, x\in \mathbb{T}, \, G(x) \leq y \leq H(x) \rbrace.
\end{equation}
Here, $G,H\in C^{2,\alpha}(\mathbb{T},\R)$ for $\alpha\in(0,1)$ are any two boundary defining functions, with $G(x) < H(x)$, for all $x\in \mathbb{T}$.
The steady incompressible Euler equations (perfect fluids) read
\begin{equation}\label{eq:euler}
\begin{cases}
&\nabla^\perp \psi\cdot \nabla \omega=0 \qquad \text{in} \qquad  D_{G,H} \\
&\Delta \psi =\omega \qquad \qquad \ \ \text{in} \qquad  D_{G,H}\\
&\partial_\tau \psi  = 0 \qquad \qquad\ \ \text{on} \qquad  \partial D_{G,H}
\end{cases}
\end{equation}
and $\partial_\tau$ denotes the derivative tangent to $\partial D_{G, H}$.
There is a one parameter freedom in the above formulation which is fixed by prescribing $\int_{\partial  D_{G,H}^{\mathsf top}} \partial_n \psi \rmd \ell = \gamma$ where  $\gamma \in \mathbb{R}$ is a prescribed circulation, which is preserved on dynamical solutions in view of the Kelvin theorem. The circulation condition determines the cohomological character of the solution, e.g. the projection of $\psi$ onto harmonic functions on the  domain $D_{G,H}$.    See \cite{drivas2023singularity}. A special but important class of solutions to  \eqref{eq:euler} are those with a specified functional relationship between vorticity and streamfunction.  Specifically, for any $F\in C^1$, and $c_G,c_H\in \R$, solutions $\psi$ to  the following semi-linear elliptic problem
\begin{equation}\label{eq:steadyEulerDve}
\begin{cases}
\Delta\psi = F(\psi) & \text{ in } D_{G,H}, \\
\psi(x,H(x))=c_H & \text{ for } x\in\mathbb{T}, \\
\psi(x,G(x))=c_G & \text{ for } x\in\mathbb{T},
\end{cases}
\end{equation}
give rise to stationary configurations of the Euler equations in $D_{G,H}$, for some circulation $\gamma$. 
We are interested in the structure of these steady states.  In some cases, their streamlines $\{\psi= {\rm const}\}$ laminate the domain (see \cite{CDG21} and also \cite{hamel2017shear, hamel2019liouville, HM23,drivas2024geometric}).  In others, islands of contractible streamlines must exist (see \cite{drivas2024islands} and also \cite{coti2023stationary, nualart2023zonal}).
The purpose of this paper is to show that stable stationary solutions $\psi$ of the Euler equations confined to generic domains $D_{G,H}$ possess islands. 

\begin{theorem}\label{thm:genericityisland}
Fix $\alpha\in (0,1)$ and $F\in C^4(\mathbb{R};\mathbb{R})$ satisfying  $F'>-\lambda_1$, where $\lambda_1$ is the first eigenvalue of the Dirichlet Laplacian in $D_{G,H}$. There exists an open dense set $\B\subset (C^{2,\alpha}(\mathbb{T}))^2$ so that for each $(G,H)\in \B$,  any solution $\psi$ to \eqref{eq:steadyEulerDve}   on $D_{G,H}$  having stagnation points possess islands. 
\end{theorem}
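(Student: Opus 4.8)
The plan is to separate a soft topological mechanism — valid for any solution whose critical set is nondegenerate — from a transversality argument establishing that such nondegeneracy is a generic property of the channel. First I would pass to a fixed domain: the diffeomorphism $(x,y)\mapsto(x,(y-G(x))/(H(x)-G(x)))$ carries $D_{G,H}$ onto $\mathcal{C}:=\mathbb{T}\times[0,1]$ and turns \eqref{eq:steadyEulerDve} into a semilinear problem $\L_{G,H}\psi=F(\psi)$ with a uniformly elliptic operator $\L_{G,H}$ depending smoothly on $(G,H)$; a diffeomorphism preserves critical points, stagnation, and islands, so it suffices to work on $\mathcal{C}$. Since $F'>-\lambda_1$ makes $\L_{G,H}-F'(\psi)$ coercive on $H^1_0(\mathcal{C})$, hence invertible, \emph{every} solution is a nondegenerate zero of $\psi\mapsto\L_{G,H}\psi-F(\psi)$ — the fact that powers the perturbation theory below. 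The core observation is elementary: a nondegenerate interior local extremum of $\psi$ produces an island, since by the Morse lemma the nearby level sets are nested Jordan curves bounding disks. Moreover, if all interior critical points are nondegenerate and $\nabla\psi\ne0$ on $\partial\mathcal{C}$ — so that $\nabla\psi$ is a nonvanishing normal field on each boundary circle — then a handle decomposition (equivalently, Poincar\'e--Hopf) on $\mathcal{C}$ gives $\#\{\text{extrema}\}=\#\{\text{saddles}\}$, so a single critical point already forces a local extremum. (When $c_G=c_H$, or when $\psi$ takes values outside $[c_G,c_H]$, an interior extremum appears even more directly.)

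Next, consider the case in which $\nabla\psi$ vanishes at some $p\in\partial\mathcal{C}$; here I would add to the generic set the requirement that $\partial_n\psi$ have only simple zeros along $\partial D_{G,H}$. At such a $p$ — say on the bottom wall, where $\psi\equiv c_G$ — Taylor expansion in flattened coordinates gives $\psi-c_G=a\,xy+\tfrac12 F(c_G)y^2+O(|(x,y)|^3)$ with $a\ne0$, so $\{\psi=c_G\}$ carries, besides the wall, a separatrix arc entering the interior transversally at $p$. Since $c_G<c_H$ this arc cannot reach the top wall; following it — genericity also makes $\{\psi=c_G\}$ a finite graph — it either closes up or returns to the bottom wall, in either case bounding, together with arcs of the walls, a subdomain $R$ with $\psi\equiv c_G$ on $\partial R$. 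A nonzero solution is not constant on any open set (unique continuation for $\Delta u=Vu$, $V\in L^\infty$), so $\psi|_R$ attains a value $\ne c_G$ at an interior point of $R$; that point is an interior local extremum of $\psi$, nondegenerate by genericity, hence gives an island.

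The principal task is thus to construct $\B$: channels for which every solution of \eqref{eq:steadyEulerDve} has nondegenerate interior critical points and only simple boundary zeros of $\partial_n\psi$. By the nondegeneracy noted above, the set $\M$ of tuples $(G,H,c_G,c_H,\psi)$ solving \eqref{eq:steadyEulerDve} is a Banach manifold with Fredholm projection to $(G,H)\in(C^{2,\alpha}(\mathbb{T}))^2$, and one wants $0$ to be a regular value of the evaluation maps $(G,H,c_G,c_H,\psi,z)\mapsto(\nabla\psi(z),\det\operatorname{Hess}\psi(z))$ on $\M\times(\mathbb{T}\times(0,1))$ and $(G,H,c_G,c_H,\psi,s)\mapsto(\partial_n\psi(s),\partial_s\partial_n\psi(s))$ on $\M\times\mathbb{T}$. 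Granting these, the parametric transversality theorem (Sard--Smale) makes the bad channels a meager set, which one upgrades to an open dense complement by noting that bad configurations are closed, since nondegeneracy gives local compactness of solution branches. The hard part is exactly the regular-value statements: one must show that deformations of the \emph{walls} alone — no perturbation of $F$ being permitted — can push the $2$-jet of $\psi$ at an arbitrary interior point into general position, and make $\partial_n\psi$ transverse to $0$ along the walls. A wall deformation $(g,h)$ generates a variation $\dot\psi$ solving $(\L_{G,H}-F'(\psi))\dot\psi=S_{g,h}$, with source and boundary terms coming from the displacement of $\partial\mathcal{C}$; invertibility of the operator reduces matters to showing that, as $(g,h)$ runs over deformations localized near the point in question, the jets of $\dot\psi$ there fill the relevant finite-dimensional cokernel — which I expect to settle by an explicit local construction together with a Runge/unique-continuation density argument. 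Carrying out this transversality analysis uniformly over the (noncompact) solution set is where the real work lies.
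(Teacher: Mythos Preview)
Your strategy is genuinely different from the paper's. You aim to force a nondegenerate local extremum via Morse theory and then show that Morse solutions are Sard--Smale generic in $(G,H)$. The paper instead attacks the complement of $\B$ directly: if $\psi_0$ has a stagnation point but no islands, it is laminar, and the paper exploits the \emph{rigidity} of that situation. There is then a unique singular streamline $\Gamma_0$, which is a graph with $F(c_0)\neq 0$ (Lemmas~\ref{lemma:nonzerofluidvel}--\ref{lemma:nonzeroFc0}); an explicit expansion $\psi_\ve=\psi_0+\ve\varphi+O(\ve^2)$ is built by a contraction argument (Theorem~\ref{thm:defPsive}); and if laminarity persists along $\ve_n\to 0$ then $\varphi$ must be \emph{constant on $\Gamma_0$} (Lemma~\ref{lemma:laminarcontradiction}). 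A two-line unique-continuation argument (Lemma~\ref{lemma:BGHdense}) then produces boundary data $(g,h)$ for which $\varphi|_{\Gamma_0}$ is nonconstant, giving density; openness is that contractible streamlines persist under perturbation (Proposition~\ref{islandsopen}). So the paper never needs Morse nondegeneracy of critical points --- it only needs to break constancy of one scalar function along one curve, which is far softer than pushing a full $2$-jet into general position.

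The step you flag as ``where the real work lies'' is a genuine gap, and it is exactly what the paper's route avoids. First, a structural issue: you place $c_G,c_H$ in $\M$ and project only to $(G,H)$. If $c_G,c_H$ truly vary, the fiber over $(G,H)$ is four-dimensional and the evaluation map lands in $\R^3$, so Sard--Smale gives you a one-dimensional set of $(c_G,c_H,z)$ with degenerate critical points --- not an empty one. The paper treats $c_G,c_H$ as fixed, and with that reading your dimension count is salvageable; but you should say so. Second, the transversality itself: the linearization $\dot\psi$ solves $(\Delta-F'(\psi_0))\dot\psi=0$ with Dirichlet data $-h\,\partial_y\psi_0$ and $-g\,\partial_y\psi_0$, so $\dot\psi$ is far from a free function and its $2$-jet at $z_0$ obeys the trace constraint $\Delta\dot\psi(z_0)=F'(\psi_0(z_0))\dot\psi(z_0)$. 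You need that, subject to this constraint, the map $\dot\psi\mapsto(\nabla\dot\psi(z_0),\,\tr(\mathrm{adj}(H)\,\mathrm{Hess}\,\dot\psi(z_0)))$ is onto $\R^3$, and that these jets are actually reachable from boundary data (the Runge step). This is plausible when $H=\mathrm{Hess}\,\psi(z_0)\neq 0$, but breaks down when $H=0$ --- i.e., when $F$ vanishes at the critical value --- and in any case is not established. By contrast, the paper's ``break constancy of $\varphi$ on $\Gamma_0$'' requires only that $\varphi$ not be identically zero below $\Gamma_0$ for some boundary data, which unique continuation dispatches immediately. Your topological portion (Poincar\'e--Hopf giving $\#\{\text{extrema}\}=\#\{\text{saddles}\}$ on the cylinder, and the boundary-separatrix argument) is essentially sound, but it only becomes a proof once the transversality is in hand.
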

Thus, either the flow is laminar because the flow is non-stagnant \cite{CDG21}, or -- if stagnation sets exist -- then generically they must be centers of islands separated by hyperbolic stagnation points. We remark that the complement of $\mathcal{B}$ is non-empty. Indeed, for any analytic $F$ and analytic non-flat non-contractible curve $\Gamma_0$ in $\mathbb{T}\times\R$, one can produce, by means of Cauchy-Kovalevskaya, laminar solutions $\psi$ to \eqref{eq:steadyEulerDve} in $D_{\Gamma_G,\Gamma_H}$ such that $\nabla\psi=0$ on $\Gamma_0$ and $\Gamma_G$ and $\Gamma_H$ are non-flat level sets of $\psi_0$. Such a domain may be very thin but nevertheless provides examples of channels which support laminar flows with essentially any geometry. See also \cite[Theorem 4.12]{elgindi2024classification} for such a construction for special $F$ of finite regularity.

An important class of steady states for which Theorem \ref{thm:genericityisland} applies are Arnold stable steady states (see \cite{arnold2009topological, arnold1966geometrie}) and highlights the drastic consequences of small boundary perturbations in stable fluid equilibria. Indeed, while any Arnold stable steady state in $\mathbb{T}\times[-1,1]$ is a shear flow with closed non-contractible fluid trajectories, we show that arbitrarily small disturbances of the boundary induce a break in the streamline topology of Arnold stable equilibria with the appearance of islands.  Note  also that the existence of a stagnation point is guaranteed by Poincare's last geometric theorem as soon as the boundary velocities point in different directions.

The ideas presented in this paper allow us also to conclude the appearance of islands for steady Euler solutions  in generic annular domains posed in $\mathbb{T}\times \R$ or also in $\mathbb{R}^2$, no longer assuming that the boundaries are graphs in the periodic variable. Namely,  steady Euler solutions that stagnate on generic annular domains  must have islands. This extension is described briefly  at the end of \S \ref{sec:islandgenericdomain}.

In some perturbative settings, we can understand more about the structure of the set $\B$.  For instance, we consider small perturbations of the straight channel 
\be
G=-1 + \ve g, \quad H=1+ \ve h.
\ee
If furthermore $c_H=c_G$, then the set $\B$ locally contains
\begin{equation}\label{eq:defB}
\B'=\lbrace (-1+ \ve g,1+ \ve h) \ : (g, h)\in (C^{2,\alpha}(\mathbb{T}))^2, \quad  h'(x)+g'(x)\not\equiv 0 \rbrace,
\end{equation}
which is clearly open and dense in $(C^{2,\alpha}(\mathbb{T}))^2$ nearby $(G,H)=(-1,1)$.

Once the existence of islands is ensured, a natural problem is to quantify their size in terms of the size of the boundary perturbation, which is of order $\ve$. Since islands appear nearby local critical points of $\psi_\ve$, they can be found to be arbitrarily small. Instead, we are interested in describing the maximal size they can reach in terms of $\ve$. This is the purpose of our next result, which describes the maximal size of islands appearing in perturbations of the periodic channel $D_{-1,1}=\mathbb{T}\times(-1,1)$.
\begin{theorem}\label{thm:genericsizeisland}
Under the assumptions of Theorem \ref{thm:genericityisland}, there exists an open and dense set $\B_0\subset \B'$ so that for all $(g,h)\in \B_0$ there exists $\ve_*>0$ such that the solution $\psi_\ve$  to \eqref{eq:steadyEulerDve} in $D_{-1+\ve g,1+\ve h}$ possess islands whose maximal height is at least of order $\sqrt{\ve}$, for all $0 < \ve < \ve_*$.
\end{theorem}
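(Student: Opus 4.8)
The plan is to compare $\psi_\ve$ with the shear it bifurcates from, extract a pendulum-type (``cat's-eye'') normal form for $\psi_\ve$ near the stagnation line of that shear, and read the island height off the level-set geometry of the normal form. First I would flatten the domain: composing \eqref{eq:steadyEulerDve} on $D_{-1+\ve g,\,1+\ve h}$ with the $\ve$-close-to-the-identity diffeomorphism sending the graphs $y=\pm1\pm\ve(\cdot)$ to $y=\pm1$ turns the problem into $\Delta_\ve\psi_\ve=F(\psi_\ve)$ on the fixed channel $\mathbb T\times(-1,1)$, with $\Delta_\ve=\Delta+\ve\,\A_\ve$ a smooth family of elliptic operators and Dirichlet datum the common value $c_H=c_G$. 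Since $F'>-\lambda_1$ makes $\Delta-F'(\psi_0)$ invertible on $H^1_0$, the implicit function theorem gives, for small $\ve$, a unique branch $\psi_\ve=\psi_0+\ve\psi_1+\ve^2R_\ve$ with $\|R_\ve\|_{C^2}$ bounded uniformly in $\ve$; here $\psi_0=\psi_0(y)$ is the shear ($\psi_0''=F(\psi_0)$, $\psi_0(\pm1)=c_H$) and $\psi_1$ solves an explicit linear elliptic problem whose Dirichlet data are $\psi_1(x,1)=-\psi_0'(1)h(x)$ and $\psi_1(x,-1)=-\psi_0'(-1)g(x)$ (from Taylor-expanding the graph conditions). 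Because $c_H=c_G$, $\psi_0$ has an interior critical point $y_0$; I would assume throughout that $a:=\tfrac12\psi_0''(y_0)\neq0$, $\psi_0'(\pm1)\neq0$ and the shear is nontrivial --- the generic situation for the given $F$, and necessary for the $\sqrt\ve$ scaling (a degenerate critical point would produce a different exponent).

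Set $\eta=y-y_0$ and $\phi(x):=\psi_1(x,y_0)$. Taylor-expanding in $\eta$ and using the uniform bound on $R_\ve$ gives, on the strip $|\eta|\le C\sqrt\ve$,
\[
\psi_\ve(x,y)-\psi_0(y_0)=a\,\eta^2+\ve\,\phi(x)+O(\ve^{3/2}),
\]
the $\eta^3$, $\ve\eta$ and $\ve^2$ terms all being $O(\ve^{3/2})$ on this scale. The critical points of $\psi_\ve$ in the strip lie at $\eta=O(\ve)$ above those of $\phi$; where $\phi''$ and $a$ share a sign the Hessian is definite (an elliptic stagnation point, a center), where they differ it is indefinite (a hyperbolic stagnation point). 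Thus, provided $\phi$ is nonconstant, $\psi_\ve$ has a center $p_\ve$ near $(x_c,y_0)$ with $\phi(x_c)$ a minimum or maximum according to $\sgn a$, and the closed level curves around $p_\ve$ form an island, in accordance with Theorem~\ref{thm:genericityisland}.

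To size it, take $a>0$, so $x_c$ minimizes $\phi$ and the neighbouring hyperbolic points sit at maxima of $\phi$, at level $\psi_0(y_0)+\ve\max\phi+O(\ve^{3/2})$. For small fixed $\delta>0$ the connected component about $p_\ve$ of the sublevel set $\{\psi_\ve<\psi_0(y_0)+\ve(\max\phi-\delta)\}$ is an island: its streamlines are closed and contractible, and for $\ve$ small it stays inside the strip and off $\partial D_{-1+\ve g,\,1+\ve h}$ because $y_0$ is interior. By the normal form its vertical extent at $x=x_c$ is $2\sqrt{\ve\,(\max\phi-\min\phi-\delta)/a}\,(1+O(\sqrt\ve))$; letting $\delta\downarrow0$ this is $\ge c\sqrt\ve$ with $c>0$ depending only on $F$ and $\max\phi-\min\phi$, which is the claimed lower bound on the maximal island height. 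What remains is to construct $\B_0$: beyond the conditions of $\B'$ and the nondegeneracies above, one needs $\max\phi>\min\phi$, i.e. $\psi_1(\cdot,y_0)\not\equiv\text{const}$. The map $(g,h)\mapsto\psi_1(\cdot,y_0)$ is bounded and linear, so its ``nonconstant'' locus is open and either empty or dense; I would verify it is nonempty by tracking a single high Fourier mode, where the boundary data $-\psi_0'(\pm1)\neq0$ force an exponentially small but nonzero value at the interior point $y_0$, weighed against the flattening contribution. Intersecting this open dense set with $\B'$ defines $\B_0$.

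The main obstacle is promoting the last two steps from a formal expansion to an honest statement: one must pin down the uniform-in-$\ve$ remainder estimates, upgrade the cat's-eye normal form to a genuine description of the level sets of $\psi_\ve$ near $(x_c,y_0)$ via a parameter-dependent Morse lemma --- ensuring the island component neither leaks out of the validity strip nor reaches the boundary --- and establish that the degeneracy loci excluded in defining $\B_0$ (in particular the nonconstancy of $\psi_1(\cdot,y_0)$) are nowhere dense in $\B'$; this last point is delicate because the domain-flattening contributes a source term of the same order in $\ve$ as the boundary data, so the non-triviality of $(g,h)\mapsto\psi_1(\cdot,y_0)$ is not immediate. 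The case $a<0$ is a minor variant in which centers and saddles swap roles but the height estimate is unchanged.
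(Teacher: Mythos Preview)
Your strategy is correct and close in spirit to the paper's, but the two arguments diverge at the level-set analysis and at the definition of $\B_0$. The paper does \emph{not} use a cat's-eye normal form on a strip; instead it localizes at a global maximum $(x_\ve,y_\ve)$ of $\psi_\ve$ near $y_0$, Taylor-expands to second order there, and shows directly that the level sets $\{\psi_\ve-\psi_0(y_0)=(1-\delta)(\psi_\ve(x_\ve,y_\ve)-\psi_0(y_0))\}$ are trapped between ellipses $(y-y_\ve)^2+\ve(x-x_\ve)^2\sim\delta\ve$. For this the paper needs the Hessian of $\psi_\ve$ at $(x_\ve,y_\ve)$ to be definite, which forces the stronger condition defining its $\B_0$: every global maximum $x_0$ of $\varphi(\cdot,y_0)$ is nondegenerate, $\partial_x^2\varphi(x_0,y_0)<0$, and $\max\varphi(\cdot,y_0)\neq0$. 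Open-and-denseness of this set is then established by an explicit cosine perturbation.

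Your normal-form route, by contrast, needs only $\phi=\psi_1(\cdot,y_0)$ nonconstant, and here you are making your own life harder than necessary. Your worry that ``the domain-flattening contributes a source term of the same order in $\ve$'' is a red herring at $y=y_0$: the flattening contribution to $\psi_1$ is $\psi_0'(y)\zeta(x,y)$ for some $\zeta$ coming from the diffeomorphism, and this vanishes at $y_0$ since $\psi_0'(y_0)=0$. Hence your $\phi$ coincides exactly with the paper's $\varphi(\cdot,y_0)$, and by Lemma~\ref{lemma:pxvarphi0} its nonconstancy is \emph{equivalent} to $(g,h)\in\B'$. So your approach actually allows $\B_0=\B'$, which is strictly larger than the paper's $\B_0$ and makes the density step vacuous. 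What your sketch leaves genuinely open is the topological control: that the connected component of the sublevel set about $p_\ve$ is simply connected (contractible) and stays inside the validity strip under the $O(\ve^{3/2})$ perturbation. This is doable---pick $c\in(\min\phi,\max\phi)$ a regular value of $\phi$, so the model level set $\{a\eta^2+\ve\phi(x)=\ve c\}$ is transversal and its lens-shaped components survive perturbation---but it needs to be said, and it is the honest substitute for the paper's local-Morse computation.
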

The set $\B_0$ contains boundary perturbations for which solutions to some elliptic equation have non-degenerate local maxima, and its precise description is postponed until Section \ref{sec:sizeislands}.

    \begin{figure}[h!]\label{bstruct}
      \includegraphics[height=.3 \linewidth]{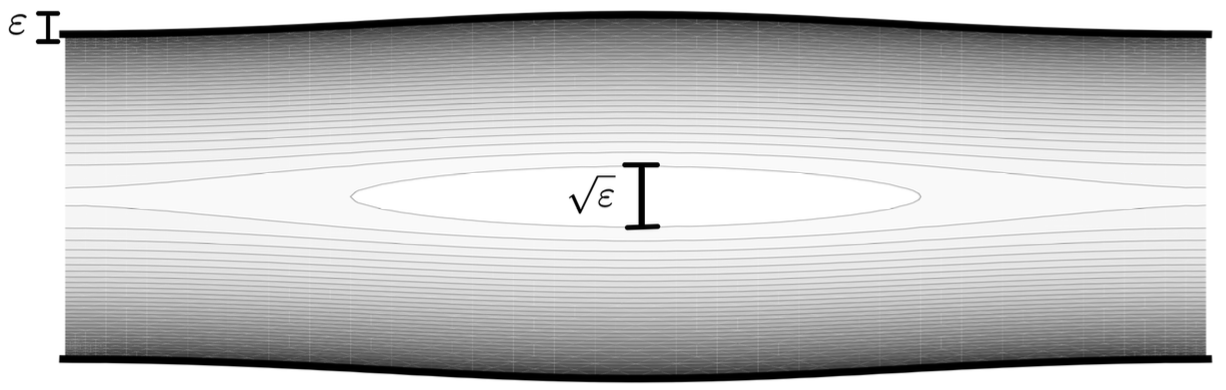}
    \end{figure}

We now explain the heuristic reason for the generic existence of islands. 
Consider any steady Euler solution $u=\nabla^\perp\psi$ in $D_{G,H} $ which has an interior critical point. If $\psi$ does not have islands, then it is laminar and by Proposition \ref{prop:islandscritical}, its set of critical points is connected and belongs to some closed non-contractible level set $\Gamma_0$. In particular, $\nabla\psi=0$ and $\psi=c$ on $\Gamma_0$. If $\omega = F(\psi)$ where $F$ is analytic, then $\psi$ is uniquely determined by $\Gamma_0$ by the Cauchy-Kovalevskaya theorem. See \cite{elgindi2024classification} where such a construction is carried out to obtain laminar steady Euler solutions emanating from a singular streamline. Since the level sets of $\psi$ are assumed to foliate the domain, they must do so monotonically on each side of $\Gamma_0$ and at some point they must meet the top and bottom boundaries. If the boundaries do not coincide with  level sets of $\psi$, then $\psi$ has islands.

Since $\psi$ is uniquely determined by the curve $\Gamma_0$, $c\in \R$ and $F$, one would conjecture that there is a one-to-one correspondence between the singular streamline $\Gamma_0$ and the boundary defining functions $(G,H)$ for which the level sets $\lbrace\psi = c_G \rbrace$ and $\lbrace\psi = c_H \rbrace$ coincides with the domain boundaries, so that the image of the map $\Gamma_0\mapsto (\lbrace\psi = c_G \rbrace,\lbrace\psi = c_H \rbrace)$ is nowhere dense. Indeed, there is one function-worth degree of freedom in $\Gamma_0$, but two defining functions $H$ and $G$ of the boundaries. In this regard,  the image of the map has empty intersection with the set of boundary perturbations where one of them is constant, see Proposition \ref{prop:ctnbottombdry}. 
Likewise, Lemma \ref{lemma:pxvarphi0} shows that for all $(g,h)\in \mathcal{B}'$, the pair $(-1-\ve g,1+\ve h)$ is not in the image of the map $\Gamma_0\mapsto (\lbrace\psi = c_G \rbrace,\lbrace\psi = c_H \rbrace)$, for $\ve>0$ small enough. 

By continuity, the image of the map is closed, and hence it is nowhere dense if it has empty interior. Then, for any analytic $(G,H)$ and $F$ for which the solution $\psi$ to \eqref{eq:steadyEulerDve} does not have islands, it is enough to show the existence of some $(g,h)$ for which the solution $\psi_\ve$ of \eqref{eq:steadyEulerDve} in $D_{G+\ve g, H+\ve h}$ develops islands, for $\ve>0$ sufficiently small. This is precisely the main idea behind Theorem \ref{thm:genericityisland} and consists of finding the solution $\psi_\ve$ as the sum of $\psi$, a first correction of size $\ve$ and a second correction of size $\ve^2$. The first order correction solves a linear equation in $D_{G,H}$ with boundary conditions linearly prescribed by the pair $(g,h)$, see \eqref{eq:HGvarphi}, and must be constant along $\Gamma_0$ if $\psi_\ve$ does not have islands. Thus, finding an instance of $(g,h)$ for which the first order correction is non-constant along $\Gamma_0$ ensures the existence of islands for $\psi_\ve$ and proves that the image of the map is nowhere dense.

The outline of the paper is as follows. Section \ref{sec:islandgenericdomain} provides some general criterion for the existence of islands, and applies them to prove Theorem \ref{thm:genericityisland}.  Section \ref{sec:pert} gives a proof in the case of small perturbations of the flat channel, where more information about the generic set of boundaries can be provided.  Section \ref{sec:sizeislands} establishes a lower bound on the maximal height of the islands, proving Theorem \ref{thm:genericsizeisland}.  Appendix \ref{appendislands1} gives some criteria for existence of islands in some definite cases.  Appendix \ref{estappend} contains some technical estimates for estimating a remainder on an asymptotic expansion used to prove Theorem \ref{thm:genericityisland}.

\section{Islands for generic domains}\label{sec:islandgenericdomain}
In this section we show Theorem \ref{thm:genericityisland}. The first part of the section is devoted to proving the generic appearance of islands in $D_\ve := D_{G+\ve g, H+\ve h}$ if they are absent in $D_0:=D_{G,H}$. Throughout the section, unless stated otherwise, we assume $\psi_0$ is a laminar non-constant solution to \eqref{eq:steadyEulerDve}. Let $\Gamma_0$ be the unique (otherwise Proposition \ref{prop:islandscritical} applies) streamline of $\psi_0$ on which $\nabla\psi_0=0$. Our first result shows that $\partial_y\psi_0$ (and thus the fluid velocity) is not identically zero on the boundaries.

\begin{lemma}\label{lemma:nonzerofluidvel}
If $\psi_0$ is laminar, then $\partial_y\psi_0\not\equiv 0$ on each connected component of $\partial D_{G,H}$.
\end{lemma}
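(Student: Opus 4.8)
The plan is to argue by contradiction: suppose $\partial_y\psi_0\equiv 0$ on some connected component $\Gamma$ of $\partial D_{G,H}$, say the top boundary $\{y=H(x)\}$. Since $\psi_0$ is constant ($=c_H$) along this curve, its tangential derivative there vanishes as well; combining this with $\partial_y\psi_0\equiv 0$ on $\Gamma$ forces $\partial_x\psi_0\equiv 0$ on $\Gamma$, hence $\nabla\psi_0\equiv 0$ on the whole component $\Gamma$. The first step is therefore to record that the full gradient, not just the tangential part, vanishes identically along $\Gamma$.

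Next I would invoke the structural result that the critical set of a laminar $\psi_0$ is contained in the single non-contractible streamline $\Gamma_0$ on which $\nabla\psi_0=0$ (this is the running hypothesis of the section, justified by Proposition \ref{prop:islandscritical}). Since $\Gamma$ is a non-contractible curve on which $\nabla\psi_0$ vanishes, and $\Gamma_0$ is \emph{the} such curve, we would get $\Gamma=\Gamma_0$, i.e. the singular streamline coincides with the boundary component. But a laminar foliation of $D_{G,H}$ must have its level sets meeting \emph{both} boundary components transversally on at least one side (the foliation moves monotonically away from $\Gamma_0$ and must fill the domain up to the other boundary); more to the point, if $\Gamma_0$ is the top boundary then on the fluid side of it the streamlines foliate a collar, and pushing toward the bottom boundary $\{y=G(x)\}$ they must eventually reach it — so $\psi_0$ takes all values between $c_H$ and $c_G$ along a path from top to bottom, which is consistent, but the key tension is that $\Gamma_0$ being a boundary component contradicts $\Gamma_0$ being an \emph{interior} singular streamline around which the foliation is organized on both sides. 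I would make this precise by noting that a laminar solution has its maximum or minimum achieved on the singular streamline $\Gamma_0$ (the level sets increase away from it on one side and also must turn around), whereas on $\partial D_{G,H}$ the Hopf lemma applies: if $\nabla\psi_0\equiv 0$ on $\Gamma$ and $\psi_0$ attains an extremum of its boundary values there while $\Delta\psi_0=F(\psi_0)$ with $F'>-\lambda_1$ (so the maximum principle machinery is available after the standard shift), then $\partial_n\psi_0\neq 0$ on $\Gamma$ by Hopf, contradicting $\partial_n\psi_0=0$.

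The cleanest route is probably: the Hopf lemma forces $\partial_n\psi_0\ne 0$ on any boundary component along which $\psi_0$ is an extremal constant, \emph{provided} the sign condition on $F$ lets us apply the strong maximum principle to $\psi_0 - c$ (or a suitable auxiliary function) near that component; since the boundary value on $\Gamma$ is a genuine extremum of $\psi_0$ over $\overline{D_{G,H}}$ unless $\psi_0$ is constant (excluded, since $\psi_0$ is non-constant and laminar), we conclude $\partial_n\psi_0\ne 0$ somewhere on $\Gamma$, hence $\partial_y\psi_0\not\equiv 0$ there. The main obstacle I anticipate is handling the sign of $F$ and the fact that $\psi_0$ need not attain its global extremum on the boundary in question — one must instead localize near $\Gamma$ and use that $\psi_0-c_H$ does not change sign in a collar neighborhood of $\Gamma$ (which follows from laminarity: the level sets near $\Gamma$ are nested copies of $\Gamma$), so that after the spectral shift absorbing $F'$ the strong maximum principle and Hopf lemma apply in that collar. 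Once that localization is set up, the contradiction with $\partial_n\psi_0\equiv 0$ closes the argument.
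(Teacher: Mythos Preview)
Your Hopf-lemma route has a genuine gap. Set $w=\psi_0-c_H$ in a collar of $\Gamma=\{y=H(x)\}$ and suppose $\nabla\psi_0\equiv 0$ on $\Gamma$. Differentiating $\partial_y\psi_0(x,H(x))=0$ and $\partial_x\psi_0(x,H(x))=0$ gives $\partial_{yy}\psi_0|_\Gamma=\dfrac{F(c_H)}{1+(H')^2}$, so the sign of $w$ in the collar is the sign of $F(c_H)$. But then $\Delta w=F(\psi_0)\approx F(c_H)$ has the \emph{same} sign as $w$, and after any linearization $(\Delta-c)w=F(c_H)+O(w^2)$ the right side again has the sign of $w$. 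Hence $w$ is a strict \emph{sub}solution where $w>0$ (or a supersolution where $w<0$), which is the wrong inequality for the Hopf lemma at a boundary minimum (respectively maximum). So Hopf does not deliver $\partial_n\psi_0\neq 0$; in fact there is no local obstruction at all --- a shear flow like $\psi_0(y)=c_H+\tfrac12 F(c_H)(y-H_0)^2+\dots$ near a flat boundary $y=H_0$ shows that $\nabla\psi_0\equiv 0$ on a boundary component is perfectly consistent locally. The contradiction has to be global.

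The paper's argument is global and quite different from yours. Assuming $\nabla\psi_0=0$ on the bottom boundary, one has two curves on which both $\psi_0$ is constant and $\nabla\psi_0=0$: the boundary component and the interior singular streamline $\Gamma_0$. On the strip between them, $\psi_0$ solves a Serrin-type overdetermined problem, and Corollary~1.1 of \cite{drivas2024geometric} forces that strip to be flat (so $G\equiv G_0$ and $\Gamma_0$ is a horizontal line) and $\psi_0$ to be shear there; elliptic stability then makes $\psi_0$ constant on that strip. A separate propagation argument (Lemma~3 of \cite{drivas2024islands}) forces $H$ to be constant as well, reducing to an ODE $\psi_0''=F(\psi_0)$ with $\psi_0(y_0)=c_G$, $\psi_0'(y_0)=\psi_0''(y_0)=0$, whose unique solution is the constant $c_G$ --- contradicting non-constancy. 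Your sketch never invokes this overdetermined rigidity, which is the real engine of the proof.
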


\begin{proof}
Assume $\partial_y\psi_0(x,G(x))=0$ for all $x\in \mathbb{T}$. Since $\psi_0(x,G(x))=c_G$, we deduce that $\nabla\psi(x,G(x))\equiv 0$. Then, $\psi_0$ is laminar and solves
\begin{equation}\label{eq:steadyEulerfreebdyDHG}
\begin{cases}
\Delta\psi_0 = F(\psi_0) & \text{ in } D_{G,H}, \\
\psi_0=c_0, \quad \nabla\psi_0=0  & \text{ on } \Gamma_0, \\
\psi_0(x,G(x)) = c_G, \quad \nabla\psi_0(x,G(x))=0 & \text{ for } x\in\mathbb{T}
\end{cases}
\end{equation}
Hence, from Corollary 1.1 in \cite{drivas2024geometric} we deduce that $\psi_0$ is a shear flow, that $G(x)=G_0$ and $\Gamma_0=\lbrace (x,y_0) : x\in \mathbb{T} \rbrace$ for some $G_0<y_0<H(x)$, for some constant $G_0 < \min H(x)$. By elliptic stability of $F$ and $\nabla\psi_0=0$ on $\partial D_{G_0,\Gamma_0}$, we deduce that $\nabla\psi_0=0$ in $D_{G_0,\Gamma_0}$. Hence, $\psi_0$ is constant in $D_{G_0,\Gamma_0}$. Moreover,  we see that either $H'\equiv 0$ or $\psi_0$ is constant on $D_{G,H}$ by using Lemma 3 in \cite{drivas2024islands}. Since $\psi_0$ is assumed non-constant on $D_{G,H}$, we have that $H=H_0$ for some $H_0>G_0$. Then, $\psi_0(x,y) = \psi_0(y)$ due to $\psi_0(x,H_0)=c_H$ and the elliptic stability of $F$. Therefore, $\psi_0(y)$ satisfies $\psi_0''(y)=F(\psi_0(y))$, with $\psi_0(y_0)=c_0=c_G$ and $\psi_0'(y_0)=\psi_0''(y_0)=0$ due to continuity and $\psi_0=c_G$ in $D_{G,\Gamma_0}$. Thus, $F(c_0)=0$ and by the uniqueness of solutions to the ODE we conclude that $\psi_0=c_G$ is constant, reaching a contradiction.
\end{proof}

Next, we parametrize $\Gamma_0$ by a smooth graph. For this, we first need to show that $\psi_0$ is non-degenerate on $\Gamma_0$.
\begin{lemma}\label{lemma:nonzeroFc0}
Let $c_0=\psi_0|_{\Gamma_0}$. Then, $F(c_0)\neq 0$ and the curve $\Gamma_0$ is a smooth graph in $x$.
\end{lemma}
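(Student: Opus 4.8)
The plan is to establish ``$F(c_0)\neq 0$'' first, and then read off the graph statement from the non-degeneracy it provides.

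\emph{Proof that $F(c_0)\neq 0$.} Suppose, for contradiction, that $F(c_0)=0$. Being closed and non-contractible, $\Gamma_0$ splits $D_{G,H}$ into two components; let $D^-$ be the one bounded below by $\{y=G(x)\}$. On $D^-$ the function $\psi_0$ solves $\Delta\psi_0=F(\psi_0)$ with the overdetermined data $\psi_0=c_0$, $\nabla\psi_0=0$ on the boundary component $\Gamma_0$, and has no interior critical points (the critical set of $\psi_0$ being exactly $\Gamma_0$). This is precisely the configuration to which the geometric rigidity of \cite{drivas2024geometric} is applied in the proof of Lemma \ref{lemma:nonzerofluidvel}, and it forces $\psi_0|_{D^-}$ to be a shear flow: $\Gamma_0=\{y=y_0\}$ is flat, $D^-$ is a flat strip, and $\psi_0=\psi_0(y)$ on $D^-$ solves $\psi_0''=F(\psi_0)$ with $\psi_0(y_0)=c_0$, $\psi_0'(y_0)=0$ (from $\nabla\psi_0=0$ on $\Gamma_0$), and $\psi_0''(y_0)=F(c_0)=0$. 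Because $F(c_0)=0$, the constant $c_0$ solves the same ODE with the same Cauchy data, so by uniqueness $\psi_0\equiv c_0$ on $D^-$; hence $\partial_y\psi_0\equiv 0$ on $\{y=G(x)\}$, contradicting Lemma \ref{lemma:nonzerofluidvel}. Therefore $F(c_0)\neq 0$.

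\emph{Proof that $\Gamma_0$ is a smooth graph in $x$.} Fix $p\in\Gamma_0$; then $\nabla\psi_0(p)=0$ and $\operatorname{tr}\operatorname{Hess}\psi_0(p)=\Delta\psi_0(p)=F(c_0)\neq 0$. The Hessian cannot be non-degenerate, for then $\nabla\psi_0$ would be a local diffeomorphism near $p$, making $p$ isolated in the critical set and contradicting that $\Gamma_0$ is a curve. Hence $\operatorname{Hess}\psi_0(p)$ has rank one, with single nonzero eigenvalue $F(c_0)$; thus $\psi_0-c_0$ vanishes to order exactly two on $\Gamma_0$, with degenerate direction tangent to $\Gamma_0$ — a ``Morse--Bott along $\Gamma_0$'' structure. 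Granting that this nonzero eigendirection is never horizontal, we get $\partial_{yy}\psi_0\neq 0$ on $\Gamma_0$, so by the implicit function theorem $\{\partial_y\psi_0=0\}$ is locally a graph $y=\eta(x)$ of class $C^{k-1,\alpha}$, where $\psi_0\in C^{k,\alpha}$, $k\ge 3$, by elliptic regularity from $F\in C^4$. Since $\Gamma_0\subset\{\nabla\psi_0=0\}\subset\{\partial_y\psi_0=0\}$, locally $\Gamma_0$ sits inside this graph; writing $\phi(x):=\psi_0(x,\eta(x))$, one checks $\phi\equiv c_0$ near $p$ — otherwise, using that $y\mapsto\psi_0(x,y)$ has a non-degenerate extremum at $y=\eta(x)$ with value $\phi(x)$, the level set $\{\psi_0=c_0\}$ near $p$ would either collapse to the point $p$/acquire an endpoint or pick up points off $\{\partial_y\psi_0=0\}$ that are not critical, each contradicting that $\Gamma_0$ is a one-dimensional set of critical points through $p$ — so $\Gamma_0=\{y=\eta(x)\}$ locally. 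Patching over $\mathbb{T}$, $\Gamma_0$ is a global $C^{k-1,\alpha}$ graph $y=\Gamma_0(x)$.

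\emph{Main obstacle.} The genuinely delicate step is the local normal-form analysis in the second part: proving that the nonzero Hessian eigendirection is never horizontal, equivalently that $\Gamma_0$ has no vertical tangency, and that $\psi_0$ is truly constant along the implicit-function curve. This is where one must use the laminar hypothesis in an essential way — so that $\Gamma_0$ is an embedded leaf and every nearby level set is a leaf of the foliation — together with the fact that $D_{G,H}$ is confined by graphs over $\mathbb{T}$; the mechanism is the ``dichotomy'' above, run by inspecting the sign of $\psi_0-c_0$ on vertical slices through $\Gamma_0$. A secondary, largely bookkeeping, point is to verify that the rigidity result of \cite{drivas2024geometric} applies verbatim to the subdomain $D^-$, whose top boundary $\Gamma_0$ is a priori only a topological non-contractible curve; should only a version with both boundaries overdetermined be available, one argues instead that $F(c_0)=0$ forces $\operatorname{Hess}\psi_0\equiv 0$ on $\Gamma_0$ (again via ``non-degenerate critical points are isolated'') and then concludes by unique continuation, the care there being to match the regularity of $\Gamma_0$ against the finite order of vanishing of $\psi_0-c_0$ permitted by $F\in C^4$.
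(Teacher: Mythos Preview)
Your outline correctly identifies the two statements to prove and the rough architecture, but both halves contain genuine gaps at precisely the points you flag as ``obstacles,'' and in each case the paper supplies a different mechanism that you are missing.

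\emph{On $F(c_0)\neq 0$.} Your analogy with Lemma~\ref{lemma:nonzerofluidvel} is off: in that proof \emph{both} boundary components of $D_{G,\Gamma_0}$ carry the overdetermined condition $\nabla\psi_0=0$, which is exactly what Corollary~1.1 of \cite{drivas2024geometric} uses. In your configuration only $\Gamma_0$ is overdetermined, while $\{y=G(x)\}$ carries merely the Dirichlet condition $\psi_0=c_G$ on a non-flat graph; the rigidity result is not stated for that case (and the other uses in the paper always have either both boundaries overdetermined or one overdetermined and the other flat). Your fallback via ``$\operatorname{Hess}\psi_0\equiv 0$ on $\Gamma_0$ then unique continuation'' runs into the circularity you note: unique continuation from Cauchy data requires $\Gamma_0$ to be a regular hypersurface, which is part of what is to be proved. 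The paper's device is different and avoids all of this: since $F(c_0)=0$ and $\nabla\psi_0=0$ on $\Gamma_0$, one may extend $\psi_0$ by the constant $c_0$ below $\Gamma_0$ down to a genuinely flat line $\Gamma_*=\{y=y_*\}$; the extension is $H^2$ and still solves $\Delta\widetilde\psi_0=F(\widetilde\psi_0)$, now with overdetermined Cauchy data on the \emph{flat} $\Gamma_*$, to which \cite[Lemma~3]{drivas2024islands} applies directly. This extension trick is the key idea you are missing.

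\emph{On $\Gamma_0$ being a graph.} Your rank-one Hessian analysis is correct as far as it goes, and you rightly reduce the question to showing $\partial_{yy}\psi_0\neq 0$ on $\Gamma_0$ (equivalently, no vertical tangent). But you do not prove this; you only indicate that the laminar hypothesis and the graph nature of $\partial D_{G,H}$ should be used ``by inspecting the sign of $\psi_0-c_0$ on vertical slices.'' The paper establishes exactly this non-vanishing by a moving-plane argument (sliding horizontal reflections), citing and adapting \cite[Lemma~5.1]{drivas2024geometric}: because the fixed boundaries are graphs, the moving-plane process can only stop at the free boundary $\Gamma_0$, and one then contradicts the Hopf lemma. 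This is a global PDE monotonicity argument rather than a local normal-form one, and it is what actually closes the gap you identify.
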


\begin{proof}
Assume $F(c_0)=0$. Then, since $\nabla\psi_0=0$ along $\Gamma_0$, let $y_* < \min \Gamma_0$ and define $\Gamma_* = \lbrace y = y_* \rbrace$. In $D_{\Gamma_*,H}$ we further extend $\psi_0$ to $\widetilde{\psi}_0$ defined by
\begin{align*}
\widetilde{\psi}_0 = \begin{cases}
\psi_0 & \text{in }D_{\Gamma_0,H}, \\
c_0 & \text{in }D_{\Gamma_*,\Gamma_0},
\end{cases}
\end{align*}
and we observe that $\widetilde{\psi}_0\in H^2(D_{\Gamma_*,H})$ is a solution to
\begin{equation}\label{eq:extendedpsi0}
\begin{cases}
\Delta\widetilde{\psi}_0 = F(\widetilde{\psi}_0) & \text{ in } D_{\Gamma_*,H}, \\
\psi_0=c_0, \quad \nabla\psi_0=0  & \text{ on } \Gamma_*=\lbrace y = y_* \rbrace. \\
\end{cases}
\end{equation}
Hence, \cite[Lemma 3]{drivas2024islands} shows that $\partial_x\widetilde{\psi}_0=0$ in $D_{\Gamma_*,H}$ and either $H(x)=H_0$ or $\psi_0$ is constant in $D_{\Gamma_*,H}$. If $H(x)=H_0$, since $\widetilde{\psi}_0(x,y) = \psi_0(y)$ satisfies \eqref{eq:extendedpsi0} in $D_{\Gamma_0,H_0}$ and $\partial_y\psi_0(y_0)=\partial_y^2\psi_0(y_0)=0$ for some $y_0<H_0$, we likewise conclude that $\psi_0$ is constant in $D_{\Gamma_*,H_0}$. Arguing similarly for the bottom boundary, we conclude that $\psi_0$ is constant in $D_{G,H}$, a contradiction.

Now, since $F(c_0)\neq 0$, we then can parametrize $\Gamma_0$ by a smooth curve, confer \cite[Lemma 2.3]{drivas2024geometric}. Moreover, an inspection of the proof of \cite[Lemma 5.1]{drivas2024geometric} shows that the smooth curve $\Gamma_0$ is given by a graph in $x$, namely $\Gamma_0 = \lbrace (x,y_0(x)): x\in \mathbb{T} \rbrace$ for some $y_0\in C^1(\mathbb{T})$. Indeed, Lemma 5.1 in \cite{drivas2024geometric} relies on a moving plane argument that shows $\partial_y^2\psi_0\neq 0$ on $\Gamma_0$. There, the moving plane process stops whenever the reflection of any connected component of the boundary of $D_{G,\Gamma_0}$ with respect to some plane $\lbrace y = \text{const} \rbrace$ become internally tangent with itself, or the boundary becomes orthogonal to the plane. Since reflected graphs do not intersect with themselves and they are not orthogonal to horizontal planes, the moving plane process, if it stops, must only do so because of the free boundary $\Gamma_0$, since the bottom boundary is given by the graph defined by $G$. Then, one reaches a contradiction with the Hopf Lemma, see \cite[Lemma 5.1]{drivas2024geometric} for the details.

Since $\nabla\psi_0(x,y_0(x))$ for all $x\in \mathbb{T}$ we have that
\begin{align*}
0 &= \partial_x^2\psi_0(x,y_0(x)) + y_0'(x)\partial_{xy}^2\psi_0(x,y_0(x)), \\ 
0&=\partial_{x,y}^2\psi_0(x,y_0(x)) + y_0'(x)\partial_{y}^2\psi_0(x,y_0(x)),
\end{align*}
from which we deduce that $\partial_x^2\psi_0(x,y_0(x)) = (y_0'(x))^2\partial_y^2\psi_0(x,y_0(x))$. In particular, we readily see that $\partial_y^2\psi_0(x,y_0(x))\neq 0$ for all $x\in \mathbb{T}$, since otherwise we would have $\partial_x^2\psi_0(x,y_0(x))=0$ as well and then $F(\psi_0(x,y_0(x)))=F(c_0)=0$, a contradiction.
\end{proof}

\subsection{Asymptotic expansion of the streamfunction }In this subsection we further investigate the solution $\psi_\ve$ to \eqref{eq:steadyEulerDve} posed in $D_{G+\ve g,H+\ve h}$. The following result shows that $\psi_\ve$ is a linear in $\ve$ perturbation of $\psi_0$, uniquely determined by the boundary perturbations $(g,h)$, together with a correction term that is $\ve^2$ small.

\begin{theorem}\label{thm:defPsive}
Let $(g,h)\in (C^{2,\alpha}(\mathbb{T}))^2$. There exists $\ve_0>0$ such that for all $0<\ve<\ve_0$ there exists a unique solution $\psi_\ve:D_\ve\rightarrow \R$ to \eqref{eq:steadyEulerDve} in $D_\ve$. Moreover, there exist functions $\varphi,\,r_\ve :D_\ve\rightarrow \R$ with $\Vert {\varphi} \Vert_{C^{2,\alpha}(D_\ve)}\lesssim 1$ and $\Vert r_\ve \Vert_{C^{2,\alpha}(D_\ve)}\lesssim \ve^2$ such that
\begin{equation}\label{eq:defPsive}
\psi_\ve(x,y) = {\psi_0}(x,y) + \ve {\varphi}(x,y) + r_\ve(x,y).
\end{equation}
\end{theorem}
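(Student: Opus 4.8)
The plan is to reduce \eqref{eq:steadyEulerDve} on the moving domain $D_\ve$ to a semilinear Dirichlet problem with fixed data on the fixed domain $D_0:=D_{G,H}$, solve the latter by the implicit function theorem using the elliptic stability hypothesis $F'>-\lambda_1$, expand the solution branch to second order in $\ve$, and transfer the expansion back to $D_\ve$. To begin, I would fix a family of $C^{2,\alpha}$-diffeomorphisms $\Phi_\ve\colon D_0\to D_\ve$ depending smoothly on $\ve$ with $\Phi_0=\mathrm{id}$; for instance $\Phi_\ve(x,y)=(x,\,y+\ve\,\Theta(x,y))$ where $\Theta\in C^{2,\alpha}(\overline{D_0})$ interpolates the boundary displacement, $\Theta(x,H(x))=h(x)$ and $\Theta(x,G(x))=g(x)$, and may in addition be chosen to vanish on a fixed neighbourhood of $\Gamma_0$, so that $\Phi_\ve$ is the identity there. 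For $\ve$ small this is a diffeomorphism onto $D_\ve$. Writing $u:=\psi_\ve\circ\Phi_\ve$, problem \eqref{eq:steadyEulerDve} becomes $\mathcal{L}_\ve u=F(u)$ in $D_0$ with $u|_{\Gamma_H}=c_H$, $u|_{\Gamma_G}=c_G$, where $\mathcal{L}_\ve$ is a uniformly elliptic divergence-form operator assembled from $D\Phi_\ve$ and $\det D\Phi_\ve$, with $C^{1,\alpha}(\overline{D_0})$ coefficients depending smoothly on $\ve$ near $0$ and $\mathcal{L}_0=\Delta$. The transplanted problem carries the same boundary data as $\psi_0$, so $\psi_0$ is its solution at $\ve=0$.

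I would then apply the implicit function theorem to $\mathcal{G}(\ve,u):=\mathcal{L}_\ve u-F(u)$, viewed as a map from $(-\ve_0,\ve_0)\times\{u\in C^{2,\alpha}(\overline{D_0}):u|_{\Gamma_H}=c_H,\ u|_{\Gamma_G}=c_G\}$ into $C^{0,\alpha}(\overline{D_0})$. Since $F\in C^4$ the Nemytskii operator $u\mapsto F(u)$ is of class $C^2$ between these spaces, so $\mathcal{G}$ is $C^2$. One has $\mathcal{G}(0,\psi_0)=0$, and $D_u\mathcal{G}(0,\psi_0)=\Delta-F'(\psi_0)$ acting on functions vanishing on $\partial D_0$; here the hypothesis $F'>-\lambda_1$ enters, for $\langle(-\Delta+F'(\psi_0))v,v\rangle=\|\nabla v\|_{L^2}^2+\int_{D_0}F'(\psi_0)v^2\ge\big(\lambda_1+\min_{\psi_0(\overline{D_0})}F'\big)\|v\|_{L^2}^2>0$ for every nonzero $v\in C^{2,\alpha}(\overline{D_0})$ vanishing on $\partial D_0$, so that $-\Delta+F'(\psi_0)$ is an isomorphism onto $C^{0,\alpha}(\overline{D_0})$ by Schauder theory. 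The implicit function theorem then produces $\ve_0>0$ and a $C^2$ branch $\ve\mapsto u_\ve$ with $u_0=\psi_0$ and $\mathcal{G}(\ve,u_\ve)=0$; setting $\psi_\ve:=u_\ve\circ\Phi_\ve^{-1}$ yields a solution of \eqref{eq:steadyEulerDve} on $D_\ve$ with $\psi_\ve\to\psi_0$. Uniqueness among all solutions follows from coercivity: a difference $w$ of two solutions on $D_\ve$ with equal traces satisfies $\int_{D_\ve}|\nabla w|^2+\bar V w^2=0$ with $\bar V:=\int_0^1F'(\psi_2+tw)\,dt$ bounded below by $-\lambda_1(D_0)+\delta$ for some $\delta>0$ (uniformly, on the compact range of the relevant solutions), while $\lambda_1(D_\ve)\to\lambda_1(D_0)$ as $\ve\to0$ by continuity of the first Dirichlet eigenvalue under the $C^{2,\alpha}$ perturbation $\Phi_\ve$, which forces $w\equiv0$ for $\ve$ small.

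For the asymptotic expansion, Taylor's formula applied to the $C^2$ branch gives, on $D_0$, $u_\ve=\psi_0+\ve\dot{u}+\rho_\ve$, where $\dot{u}:=\partial_\ve u_\ve|_{\ve=0}$ solves the linear problem $\Delta\dot{u}-F'(\psi_0)\dot{u}=-\big(\partial_\ve\mathcal{L}_\ve|_{\ve=0}\big)\psi_0$ in $D_0$ with $\dot{u}|_{\partial D_0}=0$ — which, after undoing the change of variables, is the linear equation \eqref{eq:HGvarphi} with boundary data prescribed by $(g,h)$ — so that $\|\dot{u}\|_{C^{2,\alpha}(\overline{D_0})}\lesssim1$, while $\|\rho_\ve\|_{C^{2,\alpha}(\overline{D_0})}\le\tfrac12\,\ve^2\sup_{[0,\ve]}\|\partial_\ve^2u_\ve\|_{C^{2,\alpha}}\lesssim\ve^2$. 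Transporting by $\Phi_\ve^{-1}$, which is the identity on a neighbourhood of $\Gamma_0$ and equals $\mathrm{id}+O(\ve)$ in $C^{2,\alpha}$ globally, and extending $\psi_0$ to a fixed neighbourhood of $\overline{D_0}$ containing all the $D_\ve$, one obtains the stated decomposition $\psi_\ve=\psi_0+\ve\varphi+r_\ve$ on $D_\ve$ with $r_\ve:=\psi_\ve-\psi_0-\ve\varphi$; near $\Gamma_0$ this holds verbatim with $\varphi=\dot{u}$, and $\|\varphi\|_{C^{2,\alpha}(D_\ve)}\lesssim1$.

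The routine ingredients here are the construction of $\Phi_\ve$, the invertibility of $\Delta-F'(\psi_0)$, and the qualitative implicit-function-theorem argument. The delicate point — and the reason $F\in C^4$ rather than merely $C^2$ is assumed — is the quantitative $O(\ve^2)$ bound on the remainder $r_\ve$ in the Hölder norm: this requires tracking the $\ve$-dependence of the coefficients of $\mathcal{L}_\ve$ and of the second-order Taylor remainder of $F$ in $C^{0,\alpha}$, together with, near $\partial D_0$, control of the thin collar $D_\ve\setminus D_0$ of width $O(\ve)$ on which $\psi_\ve$ and the comparison function take the prescribed boundary constants. I expect this remainder estimate to be the main technical burden, and the natural place to carry it out in detail is Appendix \ref{estappend}.
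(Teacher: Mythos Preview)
Your proposal is correct and takes a genuinely different route from the paper. The paper avoids any change of variables: it defines $\varphi$ directly by the linear problem \eqref{eq:HGvarphi} on $D_0$, extends $\psi_0$ and $\varphi$ to a fixed open set $\widetilde{D}_0\Supset \overline{D_0}\cup\overline{D_\ve}$ via a standard $C^{2,\alpha}$ extension operator, writes down the nonlinear equation \eqref{eq:rve} that $r_\ve:=\psi_\ve-\psi_0-\ve\varphi$ must satisfy \emph{on $D_\ve$}, and solves it by a Banach fixed point in a ball of radius $\ve$ in $C^{0,\alpha}(D_\ve)$ (Appendix~\ref{estappend}); the $O(\ve^2)$ size of $r_\ve$ then comes from the explicit quadratic structure of $\widetilde{\N}$ and the $O(\ve^2)$ boundary data $r_\ve^{top},r_\ve^{bot}$. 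Your approach instead transplants the problem to the fixed domain $D_0$ by a diffeomorphism, invokes the implicit function theorem to produce a $C^2$ branch $u_\ve$, and reads off the $O(\ve^2)$ remainder from Taylor's theorem on the branch.

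Your argument is more conceptual and gets smooth $\ve$-dependence and local uniqueness in one stroke, and your global uniqueness argument via coercivity is a nice addition (the paper's fixed point only yields uniqueness in the contraction ball). The paper's approach is more explicit: the equations for $\varphi$ and $r_\ve$ are written down directly in the form needed downstream, and the remainder lives on $D_\ve$ from the outset, so no push-forward bookkeeping is required. One point to make explicit in your write-up: the identification of your $\dot u$ with the paper's $\varphi$ is $\varphi=\dot u-\Theta\,\partial_y\psi_0$ (so that $\varphi$ has the nonhomogeneous boundary traces of \eqref{eq:HGvarphi} while satisfying the homogeneous interior equation), and the final $O(\ve^2)$ bound for $r_\ve$ in $C^{2,\alpha}(D_\ve)$ comes from combining $\rho_\ve=O(\ve^2)$ with the second-order Taylor expansion of $\psi_0\circ\Phi_\ve^{-1}-\psi_0$ and of $\dot u\circ\Phi_\ve^{-1}-\dot u$ across the collar, which you gesture at but should spell out once.
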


The proof of Theorem \ref{thm:defPsive} consists of finding the solution $\psi_\ve$ to \eqref{eq:steadyEulerDve} as a perturbation of $\psi_0$ given by a precise first order approximation together with a correction that is $\ve^2$ small. The first order approximation is given by the unique solution $\varphi(x,y):D_0\rightarrow \R$ to
\begin{equation}\label{eq:HGvarphi}
\begin{cases}
\left(\Delta-F'(\psi_0)\right)\varphi = 0 & \text{ in } D_0 \\
\varphi(x,H(x))= -h(x)\partial_y\psi_0(x,H(x)) & \text{ for } x\in \mathbb{T}, \\
\varphi(x,G(x))= -g(x)\partial_y\psi_0(x,G(x)) & \text{ for } x\in\mathbb{T}. \\
\end{cases}
\end{equation} 
To define $\psi_\ve$ as in \eqref{eq:defPsive}, since $D_\ve$ may not be contained in $D_0$,
we first extend $\psi_0$ and $\varphi$ 
from $D_0$ to a slightly larger region containing both $D_0$ and $D_\ve$.  Since $G,H\in C^{2,\alpha}(\mathbb{T})$, this can be done using a standard extension operator which may be found in \cite[Lemma 6.37]{gilbargtrudinger}. We fix an open set $\widetilde{D_0} \Supset \overline{D_0}$ so that for $\ve$ small enough,
$\overline{D_\ve} \Subset {\widetilde{D}_0}$. There are 
extensions $\widetilde{\psi}_0$ and $\widetilde{\varphi}$ of $\psi_0$ and $\varphi$ defined on
$\widetilde{D}_0$, coinciding with $\psi_0$ and $\varphi$ on $D_0$, so that  $\Vert\widetilde{\psi}_0\Vert_{C^3(\widetilde{D_0})}\lesssim \Vert \psi_0 \Vert_{C^3(D_0)}$ and $\Vert \widetilde{\varphi}\Vert_{C^3(\widetilde{D_0})} \lesssim \Vert \varphi \Vert_{C^3(D_0)}$. In particular, they are defined on $D_\ve$. To ease notation, we drop the tildes and we assume $\psi_0$ and $\varphi$ already denote the extensions. As for the error function $r_\ve(x,y)$, it is defined to be the unique solution to
\begin{equation}\label{eq:rve}
\begin{cases}
\left(\Delta-F'(\psi_0)\right)r_\ve = \widetilde{\N}(r_\ve)(x,y) & \text{ in } D_\ve, \\
r_\ve(x,H(x))= r_\ve^{top}(x) & \text{ for } x\in \mathbb{T}, \\
r_\ve(x, G(x))= r_\ve^{bot}(x) & \text{ for } x\in\mathbb{T}, \\
\end{cases}
\end{equation}
where
\begin{align*}
\widetilde{\N}(v) = F(\psi_0 + \ve\varphi + v) - F'(\psi_0)r_\ve -\Delta\psi_0 - \ve\Delta \varphi
\end{align*}
and
\begin{align*}
r_\ve^{top}(x)&:= c_H-  \psi_0(x,H(x)+\ve h(x)) - \ve  \varphi(x,H(x)+\ve h(x)), \\
r_\ve^{bot}(x)&:= c_G-  \psi_0(x,G(x)+\ve g(x))- \ve\varphi(x,G(x)+\ve g(x)) 
\end{align*}
With this, we crucially note that $\psi_\ve(x,y):=\psi_0(x,y) + \ve\varphi(x,y) + r_\ve(x,y)$ is the unique solution to \eqref{eq:steadyEulerDve} if and only if $r_\ve$ is the unique solution to \eqref{eq:rve}. Then, Theorem \ref{thm:defPsive} follows directly from the above and
\begin{proposition}\label{prop:boundsrve}
Let $\alpha\in (0,1)$. Let $g,h\in C^{2,\alpha}(\mathbb{T})$ and $F\in C^4$. Then, there $\ve_0>0$ such that for all $0<\ve < \ve_0$, there exists a unique solution $r_\ve$ to \eqref{eq:rve} and it is such that for all $0 < \ve < \ve_*$,
\begin{align*}
\Vert r_\ve \Vert_{C^{2,\alpha}(D_\ve)} \lesssim \ve^2 \left( \Vert F \Vert_{C^4} + \Vert h \Vert_{C^{2,\alpha}} + \Vert g \Vert_{C^{2,\alpha}} \right).
\end{align*}
\end{proposition}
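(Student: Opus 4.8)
The plan is to solve \eqref{eq:rve} by a contraction argument in $C^{2,\alpha}(D_\ve)$, with the structural point that the linear operator $\Delta-F'(\psi_0)$ is coercive on $D_\ve$ and that $\widetilde{\N}$, on $D_0$, is a second-order Taylor remainder of $F$ about $\psi_0$. For $v\in C^{2,\alpha}(D_\ve)$ let $\mathcal{T}_\ve(v)$ be the unique solution $w$ of the \emph{linear} Dirichlet problem $\left(\Delta-F'(\psi_0)\right)w=\widetilde{\N}(v)$ in $D_\ve$, $w=r_\ve^{top}$ and $w=r_\ve^{bot}$ on the two components of $\partial D_\ve$, where $\widetilde{\N}(v)$ denotes the right-hand expression in \eqref{eq:rve} read at the generic argument $v$; by the identity recorded in the text, $r_\ve$ solves \eqref{eq:rve} iff it is a fixed point of $\mathcal{T}_\ve$. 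Here $\psi_0,\varphi$ are the extensions to $\widetilde D_0\supset D_\ve$ fixed in Theorem \ref{thm:defPsive}, and all implied constants depend on $\psi_0,\varphi,F$ but not on $\ve$.

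First I would set up the linear theory. Since $F'>-\lambda_1(D_{G,H})$ and $\psi_0$ is bounded, $F'(\psi_0)\ge-\lambda_1(D_0)+\delta_0$ for some $\delta_0>0$; continuity of the first Dirichlet eigenvalue under $D_0\rightsquigarrow D_\ve$ then gives $F'(\psi_0)>-\lambda_1(D_\ve)$ for $\ve$ small, so $\Delta-F'(\psi_0)$ with zero Dirichlet data is an isomorphism and
\[
\|w\|_{C^{2,\alpha}(D_\ve)}\;\le\;C\Big(\|\widetilde{\N}(v)\|_{C^\alpha(D_\ve)}+\|r_\ve^{top}\|_{C^{2,\alpha}(\mathbb{T})}+\|r_\ve^{bot}\|_{C^{2,\alpha}(\mathbb{T})}\Big).
\]
The crucial point is that $C$ is independent of $\ve$: the boundaries $\partial D_\ve$ are uniformly $C^{2,\alpha}$, so one gets uniform Schauder constants, most transparently by pulling $D_\ve$ back to the fixed domain $D_0$ along a near-identity $C^{2,\alpha}$ diffeomorphism and working with the resulting $O(\ve)$-perturbation of $\Delta-F'(\psi_0)$.

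Next I would estimate the two inputs. For the boundary data, Taylor-expanding in $\ve$ and invoking the choice \eqref{eq:HGvarphi} of $\varphi$, the term $\psi_0(x,H(x))=c_H$ cancels $c_H$ in $r_\ve^{top}(x)$ and the two $O(\ve)$ contributions $\ve h(x)\partial_y\psi_0(x,H(x))$ and $\ve\varphi(x,H(x))=-\ve h(x)\partial_y\psi_0(x,H(x))$ cancel each other, leaving $r_\ve^{top}$ (and similarly $r_\ve^{bot}$) a genuine second-order remainder; controlling $C^{2,\alpha}(\mathbb{T})$ norms of the composite maps $x\mapsto\psi_0(x,H(x)+\ve h(x))$, $x\mapsto\varphi(x,H(x)+\ve h(x))$ yields $\|r_\ve^{top}\|_{C^{2,\alpha}}+\|r_\ve^{bot}\|_{C^{2,\alpha}}\lesssim\ve^2(\|F\|_{C^4}+\|g\|_{C^{2,\alpha}}+\|h\|_{C^{2,\alpha}})$. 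For the nonlinearity, on $D_0$ one uses $\Delta\psi_0=F(\psi_0)$ and $\Delta\varphi=F'(\psi_0)\varphi$ to write
\[
\widetilde{\N}(v)=F(\psi_0+\ve\varphi+v)-F(\psi_0)-F'(\psi_0)(\ve\varphi+v)=\Big(\textstyle\int_0^1(1-t)F''\big(\psi_0+t(\ve\varphi+v)\big)\,dt\Big)(\ve\varphi+v)^2,
\]
so for $F\in C^4$ and $\|v\|_{C^{2,\alpha}}\lesssim\ve^2$ we get $\|\widetilde{\N}(v)\|_{C^\alpha(D_0)}\lesssim\ve^2$, and the analogous formula $\widetilde{\N}(v_1)-\widetilde{\N}(v_2)=\int_0^1\big[F'\big(\psi_0+\ve\varphi+v_2+s(v_1-v_2)\big)-F'(\psi_0)\big](v_1-v_2)\,ds$ gives $\|\widetilde{\N}(v_1)-\widetilde{\N}(v_2)\|_{C^\alpha}\lesssim\ve\,\|v_1-v_2\|_{C^{2,\alpha}}$. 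On the thin region $D_\ve\setminus D_0$ (width $O(\ve)$) the extensions no longer solve their equations, so one must instead bound the defects $\Delta\psi_0-F(\psi_0)$ and $\ve(\Delta\varphi-F'(\psi_0)\varphi)$ there; these vanish on $\overline{D_0}$ and are therefore small on the adjacent strip. Making this quantitative uniformly in $\ve$, so that altogether $\|\widetilde{\N}(v)\|_{C^\alpha(D_\ve)}\lesssim\ve^2$ on the whole domain, is the main obstacle, and is the content of Appendix \ref{estappend}.

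Finally I would close the iteration. With $M:=\|F\|_{C^4}+\|g\|_{C^{2,\alpha}}+\|h\|_{C^{2,\alpha}}$, work in $\mathcal{B}_R:=\{v\in C^{2,\alpha}(D_\ve):\|v\|_{C^{2,\alpha}(D_\ve)}\le R\ve^2 M\}$. The displayed Schauder estimate together with the bounds above gives a constant $R_0$, depending only on the uniform Schauder constant and on $\psi_0,\varphi$, such that for $R\ge R_0$ and $\ve<\ve_0(R)$ the map $\mathcal{T}_\ve$ sends $\mathcal{B}_R$ into itself and is a contraction with factor $\le\tfrac12$; note also that $\mathcal{T}_\ve$ maps $C^{2,\alpha}(D_\ve)$ into itself since $\widetilde{\N}(v)\in C^\alpha(D_\ve)$ for $v\in C^{2,\alpha}(D_\ve)$ (using the regularity of the extensions and $F\in C^4$). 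The Banach fixed-point theorem then yields a unique fixed point $r_\ve\in\mathcal{B}_R\cap C^{2,\alpha}(D_\ve)$ obeying $\|r_\ve\|_{C^{2,\alpha}(D_\ve)}\lesssim\ve^2 M$, which is the asserted bound. Uniqueness within $\mathcal{B}_R$ is immediate; uniqueness among all solutions of \eqref{eq:rve} follows because any such solution produces $\psi_\ve=\psi_0+\ve\varphi+r_\ve$ solving \eqref{eq:steadyEulerDve} on $D_\ve$, and for $\ve$ small the energy functional $\psi\mapsto\int_{D_\ve}(\tfrac12|\nabla\psi|^2+\mathcal{F}(\psi))$ with $\mathcal{F}'=F$ is strictly convex (as then $F'>-\lambda_1(D_\ve)$ on the bounded range of such solutions), so \eqref{eq:steadyEulerDve} has at most one solution on $D_\ve$. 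In combination with the reduction stated before the proposition, this also yields Theorem \ref{thm:defPsive}.
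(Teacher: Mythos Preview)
Your proposal is correct and follows the same contraction-mapping strategy as the paper's Appendix \ref{estappend}: identify $\widetilde{\N}$ as a second-order Taylor remainder, invoke uniform-in-$\ve$ Schauder estimates for $\Delta-F'(\psi_0)$, and close a fixed point in a small ball. Two minor technical differences are worth noting. First, the paper subtracts off an explicit lift $\eta_\ve$ of the boundary data $r_\ve^{top},r_\ve^{bot}$ and runs the contraction for $u_\ve=r_\ve-\eta_\ve$ with homogeneous Dirichlet conditions, whereas you keep the inhomogeneous data in the Schauder estimate; both are standard and equivalent. Second, the paper runs the contraction in a $C^{0,\alpha}$ ball of radius $\ve$ (not $\ve^2$) and only bootstraps to $C^{2,\alpha}$ and the $\ve^2$ bound at the very end, while you work directly in a $C^{2,\alpha}$ ball of radius $\sim\ve^2$; your choice is slightly more direct. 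Your added global uniqueness argument via strict convexity of the energy (using $F'>-\lambda_1(D_\ve)$) is a welcome supplement, since the paper's fixed-point argument by itself only yields uniqueness within the small ball $X_\ve$.
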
 
The proof is straightforward and follows from a fixed point argument. We postpone it to Appendix \ref{estappend}. We remark here that while the required regularity for $F$ so that the above proposition holds true is $F\in C^{3,\alpha}$, we assume $F\in C^4$ for the sake of simplicity.

\subsection{Generic appearance of islands}

We first
show that any singular streamline $\Gamma_\ve$ of $\psi_\ve$ on which $\nabla \psi_\ve = 0$ must be $\ve$ close to the singular streamline $\Gamma_0$ of $\psi_0$. 

\begin{lemma}\label{lemma:ApproxGammave}
For $\ve>0$ small enough, any singular streamline $\Gamma_\ve$ of $\psi_\ve$ is given by a graph in $x$, that is, there exists some $y_\ve\in C^1(\mathbb{T})$ such that
\begin{align*}
\Gamma_\ve = \lbrace (x,y_\ve(x)): \, x\in \mathbb{T} \rbrace.
\end{align*}
Moreover, there holds $\Vert y_\ve - y_0 \Vert_{C^1(\mathbb{T})}\lesssim \ve$.
\end{lemma}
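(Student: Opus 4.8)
The plan is to exploit the asymptotic expansion $\psi_\ve = \psi_0 + \ve\varphi + r_\ve$ from Theorem \ref{thm:defPsive} together with the non-degeneracy $\partial_y^2\psi_0 \neq 0$ on $\Gamma_0$ established in Lemma \ref{lemma:nonzeroFc0}, and run a quantitative implicit function theorem argument. First I would record the key $C^{2,\alpha}$ (in particular $C^2$) bounds: $\psi_\ve - \psi_0 = \ve\varphi + r_\ve$ with $\|\varphi\|_{C^{2,\alpha}} \lesssim 1$ and $\|r_\ve\|_{C^{2,\alpha}} \lesssim \ve^2$, so $\|\nabla\psi_\ve - \nabla\psi_0\|_{C^1} \lesssim \ve$ and $\|D^2\psi_\ve - D^2\psi_0\|_{C^0} \lesssim \ve$ on the common extended domain $\widetilde{D}_0$. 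Since $\Gamma_0 = \{(x,y_0(x))\}$ with $y_0 \in C^1(\mathbb{T})$ and $\partial_y^2\psi_0(x,y_0(x))$ is bounded away from zero uniformly in $x$ (by compactness of $\mathbb{T}$ and continuity), I would fix a tubular neighborhood $U = \{(x,y) : |y - y_0(x)| < \delta\}$ of $\Gamma_0$ on which $\partial_y^2\psi_0 \neq 0$, and hence, for $\ve$ small, $\partial_y^2\psi_\ve \neq 0$ on $U$ by the $C^0$ closeness of second derivatives.

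Next I would show that all critical points of $\psi_\ve$ inside $U$ lie on a graph. Consider $G_\ve(x,y) := \partial_y\psi_\ve(x,y)$. On $\Gamma_0$ we have $\partial_y\psi_0 = 0$, so $|G_\ve(x,y_0(x))| = |\partial_y\psi_\ve - \partial_y\psi_0|(x,y_0(x)) \lesssim \ve$; and $\partial_y G_\ve = \partial_y^2\psi_\ve$ is bounded away from zero on $U$. By the (quantitative) implicit function theorem applied in the $y$ variable for each fixed $x$, there is a unique $y_\ve(x)$ with $|y_\ve(x) - y_0(x)| \lesssim \ve$ solving $\partial_y\psi_\ve(x,y_\ve(x)) = 0$ inside the slice $\{|y - y_0(x)| < \delta\}$, and $y_\ve \in C^1(\mathbb{T})$ with $\|y_\ve - y_0\|_{C^1} \lesssim \ve$ (differentiating the identity $\partial_y\psi_\ve(x,y_\ve(x)) = 0$ in $x$ and using $\partial_{xy}^2\psi_\ve = \partial_{xy}^2\psi_0 + O(\ve)$, $\partial_{xy}^2\psi_0(x,y_0(x)) = -y_0'(x)\partial_y^2\psi_0(x,y_0(x))$ from Lemma \ref{lemma:nonzeroFc0}). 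Then I would argue that on this graph $\nabla\psi_\ve$ actually vanishes, not merely $\partial_y\psi_\ve$: by the relation $\Delta\psi_\ve = F(\psi_\ve)$ and the structure of the problem, or more directly since $\Gamma_\ve$ is assumed to be a singular streamline (so $\nabla\psi_\ve \equiv 0$ on it, which forces $\Gamma_\ve \subset \{\partial_y\psi_\ve = 0\}$), the curve $\Gamma_\ve$ must be contained in the graph $\{y = y_\ve(x)\}$ just constructed — and conversely any connected component of $\{\nabla\psi_\ve = 0\}$ meeting $U$ lies on it.

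Finally I would rule out singular streamlines outside $U$: since $\psi_0$ is laminar with its \emph{only} critical set being $\Gamma_0$, we have $|\nabla\psi_0| \geq c > 0$ on the compact set $\overline{D_0}\setminus U$, so $|\nabla\psi_\ve| \geq c - C\ve > 0$ there for $\ve$ small, and likewise on $\overline{D_\ve}\setminus U$ after accounting for the boundary strip (here one uses that $D_\ve$ is a graph over $\mathbb{T}$ that is $\ve$-close to $D_0$, together with the extension of $\psi_0,\varphi$ to $\widetilde{D}_0$). Hence every critical point of $\psi_\ve$ in $D_\ve$ lies in $U$, so $\Gamma_\ve = \{(x,y_\ve(x)) : x\in\mathbb{T}\}$ is a graph with $\|y_\ve - y_0\|_{C^1} \lesssim \ve$, as claimed. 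I expect the main obstacle to be the bookkeeping at the boundary: the extended functions $\psi_0,\varphi$ only solve the PDE on $D_0$, and $\Gamma_\ve$ could a priori run close to $\partial D_\ve$; one needs Lemma \ref{lemma:nonzerofluidvel} (non-vanishing of $\partial_y\psi_0$ on $\partial D_0$, hence on a neighborhood) to guarantee that a neighborhood of $\partial D_\ve$ contains no critical points of $\psi_\ve$, so that $\Gamma_\ve$ stays in the interior region where the tubular-neighborhood argument applies.
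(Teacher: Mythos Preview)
Your proposal is correct and follows essentially the same approach as the paper: both arguments hinge on the non-degeneracy $\partial_y^2\psi_0\neq 0$ on $\Gamma_0$ from Lemma \ref{lemma:nonzeroFc0}, use the expansion of Theorem \ref{thm:defPsive} to transfer this to $\partial_y^2\psi_\ve$ in a tubular neighborhood, apply an intermediate-value/implicit-function argument to produce the unique zero $y_\ve(x)$ of $\partial_y\psi_\ve(x,\cdot)$, and then differentiate the relation $\partial_y\psi_\ve(x,y_\ve(x))=0$ to upgrade $C^0$ closeness to the $C^1$ bound. Your version is slightly more explicit in ruling out critical points away from the tube (via $|\nabla\psi_0|\geq c>0$ on $\overline{D_0}\setminus U$) and in invoking Lemma \ref{lemma:nonzerofluidvel} for the boundary strip, whereas the paper instead closes by noting periodicity forces a critical point on the constructed graph and that laminarity of $\psi_\ve$ then makes the whole graph singular; these are minor presentational differences, not a different strategy.
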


\begin{proof}
We begin by recalling that $F(c_0)=\Delta\psi_0|_{\Gamma_0}= \partial_y^2(x,y_0(x))\left( 1 + (y_0'(x))^2\right)\neq 0$ and assume without loss of generality that $F_0:=F(c_0)<0$, so that $\partial_y^2\psi_0(x,y_0(x))<0$ for all $x\in \mathbb{T}$. By uniform continuity, we have that $\partial_y\psi_0(x,y)\leq \frac{F_0}{2}$ for all $x\in \mathbb{T}$ and all $|y-y_0(x)|<\delta_0$, for some $\delta_0>0$ sufficiently small. In particular, since
\begin{align*}
\partial_y\psi_0(x,y_0(x) + \delta) = \delta\int_0^1 \partial_y^2\psi_0(x,y_0(x) + s\delta)\rmd s ,
\end{align*} 
for all $\delta\in \R$, we conclude that $\partial_y\psi_0(x,y_0(x)+\delta) \leq \delta \frac{F_0}{2} <0$ and $\partial_y^2\psi_0(x,y_0(x) - \delta) > -\delta\frac{F_0}{2}$, for all $0 < \delta < \delta_0$. Hence, for any fixed $\delta\in (0,\delta_0)$ we observe that
\begin{align*}
\partial_y\psi_{\ve}(x,y_0(x) +\delta) &= \partial_y\psi_0(x,y_0(x) +\delta) +\left( \ve\partial_y\varphi + \partial_y r_{\ve} \right)(x,y_0(x) +\delta) \\&
\leq \delta\frac{ F_0}{2} - \ve \Vert \varphi\Vert_{C^1(D_\ve)} - \Vert r_{\ve} \Vert_{C^1(D_\ve)} <0
\end{align*}
for $\ve>0$ sufficiently small, 
where $r_\ve$ is a remainder defined in \eqref{eq:defPsive}. Likewise, we have $\partial_y \psi_{\ve}(x,y_0(x) -\delta) >0$. In particular, this shows that for all $\ve>0$ small enough, for all $x\in\mathbb{T}$, there exists $y_{\ve}(x)\in (y_0(x)-\delta, y_0(x) + \delta)$ such that $\partial_y\psi_{\ve}(x,y_{\ve}(x))=0$. Since $\partial_y^2\psi_{\ve}(x,y) =\partial_y^2\psi_0(x,y_0(x)) +O(\ve) < 0$ for all $x\in \mathbb{T}$ and all $|y-y_0(x)|<\delta_0$, for $\ve$ sufficiently small, by the implicit function theorem we conclude that the mapping $x\mapsto (x,y_{\ve}(x))$ is a unique, $2\pi$-periodic and $C^1$ curve, with $\Vert y_\ve(x) - y_0(x) \Vert_{C(\mathbb{T})}\leq \delta$.

In fact, once we know $\Vert y_\ve(x) - y_0(x) \Vert_{C(\mathbb{T})}\leq \delta$ for all $\delta\in (0,\delta_0)$ by taking $\ve>0$ small enough, we can improve it to $\Vert y_\ve(x) - y_0(x) \Vert_{C^1(\mathbb{T})}\lesssim \ve$. Indeed, for all $x\in \mathbb{T}$ we have that
\begin{align}\label{eq:partialypsixyve}
0 &= \partial_y\psi_\ve(x,y_\ve(x)) = \partial_y \psi_0(x,y_\ve(x)) + \ve\partial_y\varphi(x,y_\ve(x)) + \partial_y r_\ve(x,y_\ve(x)) 
\end{align}
where further
\begin{align}\label{eq:partialypsi0xyve}
\partial_y \psi_0(x,y_\ve(x)) = (y_\ve(x) - y_0(x) ) \int_0^1 (\partial_y^2\psi_0)(x,y_0(x) + t(y_\ve(x) - y_0(x))) \rmd t.
\end{align}
Since $\partial_y^2\psi_0(x,y_0(x))\neq 0$, by uniform continuity we have that $\partial_y^2\psi_0(x,y)\neq 0$ for all $|y-y_0(x)|\leq \delta_0$, for some $\delta_0>0$. For $\ve$ small enough such that $\Vert y_\ve(x) - y_0(x) \Vert_{C(\mathbb{T})}\leq \delta_0$, we then have
\begin{align*}
\Vert y_\ve(x) - y_0(x) \Vert_{C(\mathbb{T})}\lesssim \ve \Vert \varphi \Vert_{C^1(D_{\ve})} + \Vert r_\ve \Vert_{C^1(D_{\ve})} \lesssim \ve.
\end{align*}
Similarly, applying $\frac{\rmd}{\rmd x}$ to \eqref{eq:partialypsixyve}  and \eqref{eq:partialypsi0xyve} we have that
\begin{align*}
0 &= (y_\ve(x) - y_0(x) )' \int_0^1 (\partial_y^2\psi_0)(x,y_0(x) + t(y_\ve(x) - y_0(x))) \rmd t \\  
&\quad + (y_\ve(x) - y_0(x) )'(y_\ve(x) - y_0(x) ) \int_0^1 (\partial_y^2\psi_0)(x,y_0(x) + t(y_\ve(x) - y_0(x))) \rmd t \\
&\quad + (y_\ve(x) - y_0(x) ) \int_0^1 (\partial_{xyy}^3\psi_0 + (y_0'(x))\partial_y^3\psi_0)(x,y_0(x) + t(y_\ve(x) - y_0(x))) \rmd t \\
&\quad + (y_\ve(x)-y_0(x))' \left( \ve \partial_y\varphi(x,y_\ve(x)) + \partial_y r_\ve(x,y_\ve(x)) \right) \\
&\quad + (\partial_x + y_0'(x)\partial_y)\left(\ve \partial_y\varphi(x,y_\ve(x)) + \partial_y r_\ve(x,y_\ve(x)) \right).
\end{align*}
Since $\Vert y_\ve - y_0 \Vert_{C(\mathbb{T})}\lesssim \ve$ and $\ve \Vert \varphi \Vert_{C^2(D_{\ve})} + \Vert r_\ve \Vert_{C^2(D_{\ve})}\lesssim \ve$, taking $\ve$ small enough we conclude that $\Vert y_\ve - y_0 \Vert_{C^1(\mathbb{T})}\lesssim \ve$. Finally, let $\Gamma_{\ve} := \lbrace (x, y_{\ve}(x)) : x\in \mathbb{T} \rbrace$ and note that $\partial_y\psi_{\ve}|_{\Gamma_{\ve}}=0$ and $\psi_\ve$ is periodic on $\Gamma_{\ve}$. Hence, there exists $x_0\in \mathbb{T}$ for which $\nabla\psi_{\ve}(x_0,y_{\ve}(x_0))=0$. If $\psi_{\ve}$ is laminar, then $\nabla\psi_{\ve}=0$ on $\Gamma_{\ve}$.
\end{proof}

We shall now see that if $\psi_\ve$ does not develop islands for any sufficiently small boundary perturbation $(G+\ve g,H+\ve h)$, the associated first order correction $\varphi=\varphi_{g,h}$ that solves \eqref{eq:HGvarphi} must be constant on the singular streamline $\Gamma_0$ of $\psi_0$ .

\begin{lemma}\label{lemma:laminarcontradiction}
Let $\lbrace \ve_n \rbrace_{n\geq 1}\subset (0,1)$ any sequence for which $\ve_n\rightarrow 0$ as $n\rightarrow \infty$ and $\psi_{\ve_n}$ does not have islands. Then $\varphi$ is constant on $\Gamma_0$.
\end{lemma}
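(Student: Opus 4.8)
The plan is to test the expansion $\psi_{\ve_n} = \psi_0 + \ve_n\varphi + r_{\ve_n}$ from Theorem \ref{thm:defPsive} along the singular streamline of $\psi_{\ve_n}$, where $\psi_{\ve_n}$ is constant, and to read the constancy of $\varphi$ on $\Gamma_0$ off the coefficient of $\ve_n$. First I would invoke the hypothesis: since $\psi_{\ve_n}$ has no islands it is laminar, so Lemma \ref{lemma:ApproxGammave} produces $y_{\ve_n}\in C^1(\mathbb{T})$ with $\Vert y_{\ve_n} - y_0\Vert_{C^1(\mathbb{T})}\lesssim \ve_n$ such that $\Gamma_{\ve_n} := \{(x,y_{\ve_n}(x)) : x\in\mathbb{T}\}$ carries $\nabla\psi_{\ve_n}\equiv 0$; in particular $\psi_{\ve_n}$ is constant along $\Gamma_{\ve_n}$, say $\psi_{\ve_n}|_{\Gamma_{\ve_n}} = c_{\ve_n}$.

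Next I would evaluate the expansion of Theorem \ref{thm:defPsive} at $(x,y_{\ve_n}(x))$ and Taylor-expand each term in $y$ about $y_0(x)$. For $\psi_0$, the facts $\psi_0(x,y_0(x)) = c_0$ and $\partial_y\psi_0(x,y_0(x)) = 0$ (the defining properties of $\Gamma_0$, cf.\ Lemma \ref{lemma:nonzeroFc0}) together with the bound on $\Vert\psi_0\Vert_{C^2}$ and $|y_{\ve_n}(x)-y_0(x)|\lesssim\ve_n$ give $\psi_0(x,y_{\ve_n}(x)) = c_0 + O(\ve_n^2)$, uniformly in $x$. For $\ve_n\varphi$, using $\Vert\varphi\Vert_{C^1}\lesssim 1$ and the same closeness, $\ve_n\varphi(x,y_{\ve_n}(x)) = \ve_n\varphi(x,y_0(x)) + O(\ve_n^2)$. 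Finally $|r_{\ve_n}(x,y_{\ve_n}(x))|\le \Vert r_{\ve_n}\Vert_{C^0(D_{\ve_n})}\lesssim \ve_n^2$ by Proposition \ref{prop:boundsrve}. Summing, uniformly in $x\in\mathbb{T}$,
\begin{align*}
c_{\ve_n} = c_0 + \ve_n\,\varphi(x,y_0(x)) + O(\ve_n^2), \qquad\text{equivalently}\qquad \varphi(x,y_0(x)) = \frac{c_{\ve_n}-c_0}{\ve_n} + O(\ve_n).
\end{align*}

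The right-hand side is the $x$-independent constant $a_n := (c_{\ve_n}-c_0)/\ve_n$ up to an error that is $O(\ve_n)$ uniformly in $x$, so for every fixed $x\in\mathbb{T}$ we get $a_n\to\varphi(x,y_0(x))$ as $n\to\infty$; since $a_n$ does not depend on $x$, the limit cannot either, and therefore $x\mapsto\varphi(x,y_0(x))$ is constant on $\mathbb{T}$, that is, $\varphi$ is constant on $\Gamma_0$. I do not anticipate a genuine obstacle here: the only point requiring care is that all Taylor remainders be controlled \emph{uniformly in} $x$, which is immediate from the global $C^{2,\alpha}(D_\ve)$ bounds on $\varphi$ and $r_\ve$ in Theorem \ref{thm:defPsive} and the $C^1(\mathbb{T})$ estimate for $y_{\ve_n}-y_0$ in Lemma \ref{lemma:ApproxGammave}; it is also essential that $\varphi$ is a fixed function independent of $\ve$, so that passing to the limit $n\to\infty$ is meaningful, which is precisely why the statement is phrased along a sequence $\ve_n\to 0$ rather than for a single perturbation.
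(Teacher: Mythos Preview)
Your proof is correct and follows essentially the same approach as the paper: evaluate the expansion $\psi_{\ve_n}=\psi_0+\ve_n\varphi+r_{\ve_n}$ along the singular streamline $\Gamma_{\ve_n}$ supplied by Lemma \ref{lemma:ApproxGammave}, Taylor-expand about $\Gamma_0$ using $\psi_0|_{\Gamma_0}=c_0$ and $\partial_y\psi_0|_{\Gamma_0}=0$, and read off the $O(\ve_n)$ coefficient. The only cosmetic difference is that the paper differentiates in $x$ first to obtain $0=\ve\,\frac{d}{dx}\varphi(x,y_0(x))+O(\ve^2)$, whereas you work directly with the values to get $\varphi(x,y_0(x))=(c_{\ve_n}-c_0)/\ve_n+O(\ve_n)$; both are equivalent and equally rigorous.
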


\begin{proof}
Let $\ve>0$ and assume that $\psi_\ve$ does not have islands. Then it is laminar and by Lemma \ref{lemma:ApproxGammave} there exists a singular streamline $\Gamma_\ve= \lbrace (x,y_\ve(x)): \, x\in \mathbb{T} \rbrace$ of $\psi_\ve$, with $y_\ve\in C^1(\mathbb{T})$ and $\Vert y_\ve - y_0 \Vert_{C^1(\mathbb{T})}\lesssim \ve$, where $\Gamma_0= \lbrace (x,y_0(x)): \, x\in \mathbb{T} \rbrace$ denotes the unique singular streamline of $\psi_0$. Since $\psi_\ve$ is constant along $\Gamma_\ve$, we deduce that
\begin{align*}
0 = \frac{\rmd}{\rmd x}\psi_\ve(x,y_\ve(x)) = \frac{\rmd }{\rmd x} \left( \psi_0(x,y_\ve(x)) + \ve\varphi(x,y_\ve(x)) + r_\ve(x,y_\ve(x)) \right)
\end{align*}
where further
\begin{align*}
\psi_0(x,y_\ve(x)) &= \psi_0(x,y_0(x)) + \partial_y\psi_0(x,y_0(x))(y_\ve(x)-y_0(x)) \\
&\quad+ (y_\ve(x) - y_0(x) )^2 \int_0^1 \int_0^1 t(\partial_y^2\psi_0)(x,y_0(x) + st(y_\ve(x) - y_0(x))) \rmd s \rmd t
\end{align*}
and also
\begin{align*}
\varphi(x,y_\ve(x)) = \varphi(x,y_0(x)) + (y_\ve(x) - y_0(x))\int_0 ^1 (\partial_y\varphi)(x, y_0(x) + t(y_\ve(x) - y_0(x)) \rmd t.
\end{align*}
Since $\psi_0(x,y_0(x))$ is constant and $\partial_y\psi_0(x,y_0(x)) = 0$ for all $x\in \mathbb{T}$, together with $\Vert y_\ve -y_0 \Vert_{C^1(\mathbb{T})} \lesssim \ve$, we conclude that
\begin{align*}
0 = \ve \frac{\rmd }{\rmd x}\varphi(x,y_0(x)) + O(\ve^2)
\end{align*}
Therefore, for any sequence $\ve_n\rightarrow 0$ for which $\psi_{\ve_n}$ does not have islands, we must have $\varphi$ constant along $\Gamma_0$.
\end{proof}

Once we have established that $\varphi$ being non-constant along $\Gamma_0$ gives rise to islands for $\psi_\ve$ for $\ve>0$ small enough, we next investigate the set of boundary perturbations $(g,h)$ to $(G,H)$ for which the solution $\varphi=\varphi_{g,h}$ to \eqref{eq:HGvarphi} is not constant on $\Gamma_0$. For this, given $(g,h),\, (G,H)\in (C^{2,\alpha}(\mathbb{T}))^2$, and the solution $\varphi_{g,h}$ to \eqref{eq:HGvarphi} posed in $D_{G,H}$ with boundary values $(g,h)$, we introduce
\begin{align*}
\mathcal{B}_{G,H} = \lbrace (g,h)\in (C^{2,\alpha}(\mathbb{T}))^2 : \varphi_{g,h}|_{\Gamma_0}\neq \text{const}\rbrace
\end{align*}
We show that any solution $\psi_0$ in $D_{G,H}$ is an arbitrarily small perturbation away from developing islands.

\begin{lemma}\label{lemma:BGHdense}
For all $(G,H)\in (C^{2,\alpha}(\mathbb{T}))^2$ the set $\mathcal{B}_{G,H}$ is open and dense.
\end{lemma}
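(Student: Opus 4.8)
The plan is to show that $\mathcal{B}_{G,H}$, the set of boundary data $(g,h)$ for which $\varphi_{g,h}$ is non-constant on $\Gamma_0$, is open and dense by analyzing the linear map $(g,h)\mapsto \varphi_{g,h}|_{\Gamma_0}$. \emph{Openness} is straightforward: the map $(g,h)\mapsto \varphi_{g,h}$ is continuous from $(C^{2,\alpha}(\mathbb{T}))^2$ into $C^{2,\alpha}(D_{G,H})$ by elliptic regularity and the elliptic stability assumption $F'>-\lambda_1$ (which guarantees invertibility of $\Delta-F'(\psi_0)$ with Dirichlet data), so the trace on $\Gamma_0$ depends continuously on $(g,h)$; the condition of being non-constant along a curve is open, hence $\mathcal{B}_{G,H}$ is open. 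Note also that $\mathcal{B}_{G,H}$ is the complement of a closed linear subspace $\mathcal{K}:=\{(g,h):\varphi_{g,h}|_{\Gamma_0}\equiv\text{const}\}$, so density is equivalent to showing $\mathcal{K}\neq (C^{2,\alpha}(\mathbb{T}))^2$, i.e. exhibiting one pair $(g,h)$ with $\varphi_{g,h}$ non-constant on $\Gamma_0$ — linearity then upgrades this single example to density since a proper closed subspace of a Banach space has empty interior.

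So the crux is to produce \emph{one} boundary perturbation $(g,h)$ for which $\varphi_{g,h}|_{\Gamma_0}$ is not constant. I would argue by contradiction: suppose $\varphi_{g,h}|_{\Gamma_0}\equiv c(g,h)$ for \emph{every} $(g,h)$. Specializing first to $g\equiv 0$ and varying $h$, and then to $h\equiv 0$ and varying $g$, and using linearity, it suffices to treat (say) $g\equiv 0$. Then $\varphi=\varphi_{0,h}$ solves $(\Delta-F'(\psi_0))\varphi=0$ in $D_0$, with $\varphi=0$ on the bottom boundary $\{y=G(x)\}$ and $\varphi(x,H(x))=-h(x)\partial_y\psi_0(x,H(x))$ on the top. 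By Lemma~\ref{lemma:nonzerofluidvel}, $\partial_y\psi_0\not\equiv 0$ on the top component, so as $h$ ranges over $C^{2,\alpha}(\mathbb{T})$ the top Dirichlet datum $\varphi|_{\text{top}}$ ranges over a \emph{dense} subset of, say, $C^{0}(\mathbb{T})$ (it is exactly $\{-h\,\partial_y\psi_0|_{\text{top}}\}$, which is dense since $\partial_y\psi_0|_{\text{top}}$ is continuous and not identically zero). If every such $\varphi$ were constant on $\Gamma_0$, then by continuity of the solution operator the map $h\mapsto \varphi_{0,h}|_{\Gamma_0}$ would be constant on this dense set, hence everywhere — but I claim this forces the Green's-function/Poisson-kernel representation to degenerate. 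Concretely, write $\varphi_{0,h}(z)=\int_{\text{top}} P(z,w)(-h\,\partial_y\psi_0)(w)\,d\ell(w)$ where $P$ is the Poisson kernel for $\Delta-F'(\psi_0)$ on $D_0$ with zero data on the bottom. Constancy of $z\mapsto\varphi_{0,h}(z)$ along $\Gamma_0$ for all $h$ means that for every two points $z_1,z_2\in\Gamma_0$ the kernel $P(z_1,w)-P(z_2,w)$ annihilates every function of the form $h(w)\partial_y\psi_0(w)$, hence (density) $P(z_1,\cdot)\equiv P(z_2,\cdot)$ on the top boundary. But $P(z,\cdot)>0$ strictly on the top boundary (maximum principle / Hopf), and $z\mapsto P(z,w)$ is a nonconstant solution of the homogeneous equation for fixed $w$ — two distinct interior points $z_1\neq z_2$ cannot give identical Poisson kernels, since e.g. integrating against the datum $1$ shows $\varphi_{0,1}$ would be constant on $\Gamma_0$ while the maximum principle and Hopf lemma on the bottom boundary (where $\varphi_{0,1}=0$ and the datum on top is $-\partial_y\psi_0$, of one sign near points where $\psi_0$ achieves its boundary behavior) force a genuine normal derivative, contradicting constancy along the interior curve $\Gamma_0$ unless $\Gamma_0$ is itself a boundary component — which it is not.

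The main obstacle is making the last degeneracy argument fully rigorous, since $\partial_y\psi_0|_{\text{top}}$ may change sign, so the top datum $-h\partial_y\psi_0$ is not of a fixed sign and one cannot bluntly invoke the maximum principle. The clean way around this, which I would adopt, is to instead choose $h$ cleverly rather than run a density argument: pick $h$ supported near a point $x_*$ where $\partial_y\psi_0(x_*,H(x_*))\neq 0$, so that $\varphi_{0,h}$ has (up to scaling) Dirichlet data a bump of one sign localized near $(x_*,H(x_*))$; then $\varphi_{0,h}$ has a strict sign in all of $D_0$ by the maximum principle, and in particular $\varphi_{0,h}>0$ (say) in a neighborhood of $\Gamma_0$, while $\varphi_{0,h}$ decays away from the support of the bump — so its restriction to the noncontractible curve $\Gamma_0$ cannot be constant (it is positive everywhere but, by Harnack/Hopf-type decay estimates, strictly larger near the longitude $x=x_*$ than on the far side of the torus). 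More carefully: if $\varphi_{0,h}|_{\Gamma_0}\equiv c>0$, subtract and note $\varphi_{0,h}-c$ vanishes on $\Gamma_0$, is $\geq -c<0$ on the bottom boundary and equals the bump minus $c$ on the top; choosing the bump amplitude small makes the top datum also $<0$ away from $x_*$, and then $\varphi_{0,h}-c$ would be a solution, negative on most of $\partial(D_0\setminus\Gamma_0)$ and zero on $\Gamma_0$, forcing (maximum principle in the subdomain between $\Gamma_0$ and the top boundary) a contradiction with the strictly positive bump region unless the bump region is trivial. I expect roughly this localization argument, combined with the already-established fact $\partial_y\psi_0\not\equiv 0$ on each component (Lemma~\ref{lemma:nonzerofluidvel}) and $F'>-\lambda_1$ for solvability and maximum-principle validity, to close the proof; the technical care is entirely in the sign bookkeeping and in invoking the generalized maximum principle for $\Delta-F'(\psi_0)$ on the two subdomains cut out by $\Gamma_0$.
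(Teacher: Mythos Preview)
Your openness argument and your reduction of density to \emph{nonemptiness} via linearity (the complement $\mathcal{K}$ is a closed subspace, so it suffices to find a single $(g,h)\notin\mathcal{K}$) are correct and match the paper's reasoning exactly.

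The gap is in your construction of that single element. Your ``more careful'' bump argument fails because subtracting a constant does not preserve the equation: if $\varphi_{0,h}$ solves $(\Delta-F'(\psi_0))\varphi=0$ then $w:=\varphi_{0,h}-c$ solves $(\Delta-F'(\psi_0))w = cF'(\psi_0)$, which is inhomogeneous with a sign you do not control, so the maximum-principle comparison you invoke in the subdomain between $\Gamma_0$ and the top is not available. Your Poisson-kernel sketch also does not close: from $P(z_1,\cdot)=P(z_2,\cdot)$ on the top component you try to extract a contradiction via the datum $h=1$ and the Hopf lemma on the bottom, but a nontrivial normal derivative at the bottom boundary in no way precludes constancy of $\varphi_{0,1}$ along the interior curve~$\Gamma_0$, so that line of reasoning stalls.

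The paper's route avoids all sign and maximum-principle bookkeeping by using \emph{unique continuation} instead. One tests two explicit top data $h_1=1$, $h_2=\cos x$ (with $g=0$); if both give constant traces $\varphi_1,\varphi_2$ on $\Gamma_0$, the linear combination $h=\varphi_2-\varphi_1\cos x$ yields $\varphi_{0,h}|_{\Gamma_0}\equiv 0$. Since $\varphi_{0,h}$ also vanishes on the bottom, elliptic stability on the subdomain $D_{G,\Gamma_0}$ forces $\varphi_{0,h}\equiv 0$ there, hence $\varphi_{0,h}$ has zero Cauchy data on $\Gamma_0$; unique continuation then gives $\varphi_{0,h}\equiv 0$ on all of $D_{G,H}$, contradicting the nonzero top datum $-h\,\partial_y\psi_0(x,H(x))$ (nonzero by Lemma~\ref{lemma:nonzerofluidvel}). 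The key move you are missing is to first drive the constant on $\Gamma_0$ to \emph{zero} by linear combination, which converts ``constant on $\Gamma_0$'' into a genuine Dirichlet condition and makes the rigidity completely clean.
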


\begin{proof}
The open statement on $\mathcal{B}_{G,H}$ follows from the fact that $\varphi_{g,h}|_{\Gamma_0}\neq \text{const}$ is an open condition and $\varphi_{g,h}$ has continuous dependence on $(g,h)$ due to classical Schauder estimates.

As for the density statement, since $\varphi_{g,h}$ is linear in the boundary values $(g,h)$, it is enough to show that $\mathcal{B}_{G,H}$ is non-empty. Namely, we shall exhibit some $(g,h)\in (C^{2,\alpha}(\mathbb{T}))^2$ such that $\varphi_{g,h}$ is not constant along $\Gamma_0$. In this direction, let $g=0$, $h_1=1$, $h_2=\cos(x)$ and assume that $(0,h_i)\not \in \mathcal{B}_{G,H}$, for $i=1,2$.  That is, $\varphi_{0,h_i}|_{\Gamma_0}=:\varphi_i$ for some constants $\varphi_i\in \R$. We assume $\varphi_i\neq 0$. Then, for $h(x)=\varphi_2-\varphi_1\cos(x)$ we have that $\varphi_{0,h}|_{\Gamma_0}=0$ and $\varphi_{0,h}(x,G(x))=0$ as well. Therefore, due to the elliptic stability of $F'$ we conclude that $\varphi_{0,h}=0$ on $D_{G,\Gamma_0}$ and thus $\nabla\varphi_{0,h}=0$ on $\Gamma_0$. Since $\varphi_{0,h}$ solves \eqref{eq:HGvarphi}, by unique continuation (see \cite{nadirashvili1986uniqueness}) we have $\varphi_{0,h}=0$ in all $D_{G,H}$ and thus by continuity $\varphi_{0,h}(x,H(x))=0$, a contradiction with $\varphi_{0,h}(x,H(x)) = -\partial_y\psi_0(x,H(x))h(x)\neq 0$, since $\partial_y\psi_0(x,H(x))\neq 0$ due to Lemma \ref{lemma:nonzerofluidvel} and $h(x)\neq 0$. If $\varphi_i=0$ for some $i=1,2$, we argue for $h(x)=h_i(x)$ and we likewise reach $h_i(x)=0$. Hence, we must have either $(0,h_1)\in \mathcal{B}_{G,H}$ or $(0,h_2)\in \mathcal{B}_{G,H}$ and therefore we conclude that $\mathcal{B}_{G,H}$ is non-empty.
\end{proof}

\subsection{Genericity of Islands: Proof of Theorem \ref{thm:genericityisland}} Define the set
\begin{equation}
\mathcal{B} = \lbrace (G,H)\in (C^{2,\alpha}(\mathbb{T}))^2 : \text{ the solution }\psi_0 \text{ to } \eqref{eq:steadyEulerDve} \text{ in } D_{G,H} \text{ possess islands} \rbrace.
\end{equation}
We claim that it
is open and dense. The density of $\mathcal{B}$ follows from Theorem \ref{thm:defPsive}, Lemma \ref{lemma:laminarcontradiction} and Lemma \ref{lemma:BGHdense}, since there is an open and dense set of boundary perturbations $(G+\ve g,H + \ve h)$ to $(G,H)$ such that the associated solution $\psi_\ve$ develops islands. Therefore, the theorem is proved once we show that $\mathcal{B}$ is open in $(C^{2,\alpha}(\mathbb{T}))^2$. Fixing $(H,G)\in \mathcal{B}$, we have the following. 

\begin{proposition}\label{islandsopen}
There exists $\ve_0>0$ such that for all $(h,g)\in (C^{2,\alpha}(\mathbb{T}))^2$ with $\Vert g \Vert_{C^{2,\alpha}(\mathbb{T})}+\Vert h \Vert_{C^{2,\alpha}(\mathbb{T})}\leq 1$, the solution $\psi_\ve$ in $D_{G+\ve g, H+\ve h}$ contains closed contractible streamlines, for all $0<\ve < \ve_0$.
\end{proposition}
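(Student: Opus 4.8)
The plan is to prove that the presence of an island persists under $C^{0}$‑small perturbations of the streamfunction, via a maximum–comparison argument over a fixed topological disk; this is robust because it uses no non‑degeneracy of the critical set of $\psi_0$.

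\textbf{Step 1 (a disk on which $\psi_0$ has a genuine interior extremum).} Since $(G,H)\in\mathcal B$, the solution $\psi_0$ has an island, hence a closed contractible streamline $\sigma_0$: a Jordan curve in $D_0$, null‑homotopic in the annulus $D_0$, along which $\psi_0\equiv c_\ast$ for some constant $c_\ast$. Such a curve bounds an embedded closed disk $\overline{U_0}\subset D_0$ with $\partial U_0=\sigma_0$, so $\overline{U_0}\Subset D_0$. I claim $\psi_0$ is non‑constant on $U_0$: otherwise $F(c_\ast)=0$ and $\nabla\psi_0\equiv 0$ on the open set $U_0$, whence the unique continuation property for $(\Delta-F'(\psi_0))$ (used already in Lemma~\ref{lemma:BGHdense}; see \cite{nadirashvili1986uniqueness}) forces $\psi_0\equiv c_\ast$ in $D_0$, contradicting that $\psi_0$ has islands. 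Hence $\psi_0|_{\overline{U_0}}$ attains an extreme value $M\neq c_\ast$ at an interior point $q_0\in U_0$; assume $M>c_\ast$ (the interior‑minimum case is symmetric, using sub‑level sets below).

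\textbf{Step 2 (closeness of $\psi_\ve$ to $\psi_0$ on $\overline{U_0}$).} Since $\partial D_\ve$ lies within $C\ve$ of $\partial D_0$ in sup norm, uniformly for $\Vert g\Vert_{C^{2,\alpha}}+\Vert h\Vert_{C^{2,\alpha}}\le 1$, and $\overline{U_0}$ is at positive distance from $\partial D_0$, we get $\overline{U_0}\Subset D_\ve$ once $\ve$ is small. By Theorem~\ref{thm:defPsive} and Proposition~\ref{prop:boundsrve}, on $D_\ve$ one has $\psi_\ve=\psi_0+\ve\varphi+r_\ve$ with $\psi_0,\varphi$ the fixed $C^3$ extensions (which agree with the originals on $D_0\supset\overline{U_0}$) and $\Vert\varphi\Vert_{C^{2,\alpha}}\lesssim 1$, $\Vert r_\ve\Vert_{C^{2,\alpha}}\lesssim\ve^2$, uniformly over the unit ball of $(g,h)$. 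Thus $\Vert\psi_\ve-\psi_0\Vert_{C^0(\overline{U_0})}\le C_1\ve$ with $C_1$ independent of $(g,h)$, so $\max_{\partial U_0}\psi_\ve\le c_\ast+C_1\ve$ while $\max_{\overline{U_0}}\psi_\ve\ge\psi_\ve(q_0)\ge M-C_1\ve$. Choosing $\ve_0>0$ (also small enough for the previous sentence and Theorem~\ref{thm:defPsive}) with $2C_1\ve_0<M-c_\ast$, for all $0<\ve<\ve_0$ the maximum of $\psi_\ve$ over $\overline{U_0}$ strictly exceeds its maximum over $\partial U_0$; in particular it is attained at an interior point $p_\ve\in U_0$ with $\psi_\ve(p_\ve)>\max_{\partial U_0}\psi_\ve$.

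\textbf{Step 3 (extracting a closed contractible streamline).} Since $\psi_\ve\in C^{2,\alpha}\subset C^2$, Sard's theorem gives a regular value $c'$ of $\psi_\ve$ with $\max_{\partial U_0}\psi_\ve<c'<\psi_\ve(p_\ve)$. The super‑level set $K:=\{x\in\overline{U_0}:\psi_\ve(x)\ge c'\}$ is compact, non‑empty ($p_\ve\in K$), and contained in $\mathrm{int}\,U_0$ because $\psi_\ve<c'$ on $\partial U_0$; hence $K\Subset D_\ve$ and $\partial K\subset\{\psi_\ve=c'\}$. As $c'$ is a regular value, $\{\psi_\ve=c'\}$ is a smooth $1$‑manifold near $K$, so $\partial K$ is a non‑empty compact $1$‑manifold, i.e. a finite disjoint union of Jordan curves lying in the disk $U_0$. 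Each is a closed streamline of $\nabla^\perp\psi_\ve$ and, being contained in $U_0\subset D_\ve$, is contractible in $D_\ve$. This yields the asserted closed contractible streamline of $\psi_\ve$, and hence the openness of $\mathcal B$.

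\textbf{Main obstacle.} The genuinely delicate point is conceptual rather than computational: membership in $\mathcal B$ only records that $\psi_0$ has an island, not that it has a non‑degenerate center, so one cannot simply continue a critical point by the implicit function theorem; the robustness comes entirely from comparing the interior and boundary maxima of $\psi_\ve$ over the fixed disk $\overline{U_0}$. The rest is bookkeeping — that the extensions $\widetilde\psi_0,\widetilde\varphi$ are irrelevant on $\overline{U_0}$ (they agree with $\psi_0,\varphi$ there), and that $C_1$ and $\ve_0$ are uniform over $\Vert g\Vert_{C^{2,\alpha}}+\Vert h\Vert_{C^{2,\alpha}}\le 1$, which is exactly the uniform content of Proposition~\ref{prop:boundsrve}.
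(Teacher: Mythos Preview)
Your proof is correct and takes a genuinely different route from the paper. The paper argues locally near a fixed contractible streamline $\Gamma$ of $\psi_0$ and runs a four-way case split on whether $\nabla\psi_0$ vanishes on $\Gamma$ and whether $F(c)$ vanishes: in the regular cases it uses the normal derivative (or, when $\nabla\psi_0\equiv 0$ but $F(c)\neq 0$, the second normal derivative) to trap a level curve of $\psi_\ve$ between two nearby contractible level curves $\Gamma_1,\Gamma_2$ of $\psi_0$; the degenerate case $\nabla\psi_0\equiv 0$, $F(c)=0$ is dispatched by unique continuation. You instead pass to the closed disk $\overline{U_0}$ bounded by $\sigma_0$, observe (via unique continuation, as in the paper's Case~4) that $\psi_0$ is non-constant there and hence has a genuine interior extremum, and then use only $C^0$-closeness of $\psi_\ve$ to $\psi_0$ together with Sard's theorem to produce a regular contractible level curve of $\psi_\ve$ inside $U_0$. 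Your argument is shorter and avoids the case analysis entirely; the price is that you lose the paper's explicit localization of the new streamlines near $\Gamma$, which is irrelevant for the openness statement but informative in its own right. Both approaches rely on the same uniformity in $(g,h)$ coming from Proposition~\ref{prop:boundsrve}, which you correctly track.
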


\begin{proof}
Let $\Gamma$ denote a closed contractible streamline of $\psi_0$ and let $c=\psi_0|_\Gamma$. The proof is divided according to the nature of the contractible streamline $\Gamma$.
\subsubsection*{Case 1} Assume that $\nabla\psi_0\neq 0$ on $\Gamma$.  Since $\psi_0$ is $C^2$, we can locally parametrize $\Gamma$ by a $C^1$ curve. Let $\hat{n}$ denote the normal unit vector pointing inside the set bounded by $\Gamma$ and assume without loss of generality) that $\hat{n}(\x)=\mathbf{e}_2$ and $\partial_{\hat{n}}\psi_0(\x)=\partial_y\psi_0(\x) > 0$ for some $\x\in \Gamma$. 

By continuity, $\frac12\partial_y\psi_0(\x) < \partial_y\psi_0(\x+y\hat{n}) < \frac32\partial_y\psi_0(\x)$, for all $0 < y < \delta_0$, for some $\delta_0$ small enough. Let $0 < 64y_1 < y_2 < \delta_0$ and observe that 
\begin{align*}
\psi_0(\x+y_1\hat{n}) < \psi_0(\x) + \frac32\partial_y\psi_0(\x) y_1 < \psi_0(\x + y\hat{n}) < \psi_0(\x) + \frac12\partial_y\psi_0(\x) y_2 < \psi_0(\x+y_2\hat{n})
\end{align*}
for all $y\in (64y_1,y_2)$. For $\Gamma_i = \lbrace \psi_0=\psi_0(\x+y_i\hat{n})\rbrace \ni \x_0+y_i\hat{n} $, for $i=1,2$, we see $\Gamma_y = \lbrace \psi_0=\psi_0(\x+y\hat{n})\rbrace \ni \x_0+y\hat{n}$ lies between $\Gamma_1$ and $\Gamma_2$, two closed and contractible streamline.  Moreover, 
\begin{align*}
 \psi_0(\x) + \frac32\partial_y\psi_0(\x) y_1 < \psi_\ve(\x + y\hat{n}) < \psi_0(\x) + \frac12\partial_y\psi_0(\x) y_2 
\end{align*}
for all $\ve>0$ small enough and hence $\Gamma_y^\ve = \lbrace \psi_\ve=\psi_\ve(\x+y\hat{n})\rbrace \ni \x_0+y\hat{n}$ still lies between $\Gamma_1$ and $\Gamma_2$, it is thus a closed contractible stream-line and $\psi_\ve$ has islands.

\subsubsection*{Case 2} Assume now $\nabla\psi_0=0$ somewhere in $\Gamma$. If there exists $\x\in \Gamma$ such that $\nabla\psi_0(\x)\neq 0$, by continuity there exists points inside the region bounded by $\Gamma$ where $\nabla\psi_0\neq 0$. At least one of them belongs to a regular streamline where $\nabla\psi_0\neq 0$ throughout (see \cite[Lemma 2.1]{drivas2024geometric}) and Case 1 applies.

\subsubsection*{Case 3} Assume next that $\nabla\psi_0=0$ and also $F(c)\neq 0$. Then, we can locally parametrize $\Gamma$ by a $C^1$ curve, see \cite[Lemma 2.3]{drivas2024geometric} and since $\Gamma$ is closed and contractible, there exists some $\x\in \Gamma$ such that $\hat{n}=\mathbf{e}_2$. Further assuming $F(c)>0$, we have that $\partial_y^2\psi_0(\x)>0$ and hence $\psi_0>c$ in a small annular region bounded by $\Gamma$ and $\psi_0$ is monotone increasing there. Thus, we proceed as in Case 1 to show the existence of two closed and contractible stream-lines $\Gamma_i$ of $\psi_0$ that enclose contractible streamlines of $\psi_\ve$, for $\ve>0$ small enough.

\subsubsection*{Case 4} We last consider $\nabla\psi_0=0$ on $\Gamma$ and $ F(c)=0$. Then, we can extend $\psi_0=c$ in the region enclosed by $\Gamma$ and the resulting function still satisfies \eqref{eq:steadyEulerDve} in $D_{G,H}$. Since $F$ is $C^2$, the unique continuation principle, see \cite{nadirashvili1986uniqueness}, shows that $\psi_0=c$ in all $D_{G,H}$, thus contradicting our assumptions.
\end{proof}

\subsection{Islands in generic annular domains}
We briefly comment on the setting where the Euler equations are considered in an annular domain $D_{\Gamma_g,\Gamma_h}$  of $\mathbb{T}\times\R$ defined to be the region between  two closed non-contractible  $C^{2,\alpha}$ curves $\Gamma_h$ and $\Gamma_g$. As before, we consider solutions $\psi$ to  
\begin{equation}\label{eq:steadyEulerDannular}
\begin{cases}
\Delta\psi = F(\psi) & \text{ in } D_{\Gamma_g,\Gamma_h}, \\
\psi=c_h & \text{ on } \Gamma_h, \\
\psi=c_g & \text{ on } \Gamma_g,
\end{cases}
\end{equation}
for any $F\in C^1$, and $c_g,c_h\in \R$. We fix $\alpha\in (0,1)$ and $F\in C^4(\mathbb{R};\mathbb{R})$ satisfying  $F'>-\lambda_1$, where $\lambda_1$ is the first eigenvalue of the Dirichlet Laplacian in $D_{\Gamma_g,\Gamma_h}$. Theorem \ref{thm:genericityisland} now reads
\begin{theorem}\label{thm:annulargenericityisland}
There exists an open dense set $\B$ in the product space of $C^{2,\alpha}$ curves so that for each $(\Gamma_g,\Gamma_h)\in \B$,  any solution $\psi$ to \eqref{eq:steadyEulerDannular}  on $D_{\Gamma_g,\Gamma_h}$  having stagnation points possess islands.
\end{theorem}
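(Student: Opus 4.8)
The plan is to run the proof of Theorem \ref{thm:genericityisland} essentially verbatim, with boundary perturbations encoded as normal graphs over the curves $\Gamma_g,\Gamma_h$ in place of vertical graphs over $\mathbb{T}$, and with every rigidity input from \cite{drivas2024geometric,drivas2024islands} invoked in its annular form (which is available, since \cite{drivas2024geometric} also treats annuli). Fix $(\Gamma_g,\Gamma_h)$ and a solution $\psi_0$ to \eqref{eq:steadyEulerDannular} with a stagnation point; if $\psi_0$ is laminar then, by Proposition \ref{prop:islandscritical}, its critical set is a single closed non-contractible streamline $\Gamma_0$ with $\psi_0\equiv c_0$ and $\nabla\psi_0\equiv 0$ on $\Gamma_0$. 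The goal is to exhibit an open dense family of pairs $(\Gamma_g,\Gamma_h)$ precluding this.

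First I would re-establish the two structural lemmas of \S\ref{sec:islandgenericdomain}. The analogue of Lemma \ref{lemma:nonzerofluidvel}: if $\partial_n\psi_0\equiv 0$ on $\Gamma_g$, then on the annulus between $\Gamma_g$ and $\Gamma_0$ the function $\psi_0$ solves an overdetermined problem, and the free-boundary rigidity of \cite{drivas2024geometric} forces $\psi_0$ to be symmetric --- translation-invariant in $\mathbb{T}\times\R$, rotation-invariant in $\R^2$ --- so $\Gamma_g$, $\Gamma_0$, and, via (the annular form of) Lemma 3 of \cite{drivas2024islands}, also $\Gamma_h$ are round level curves; the problem then collapses to a one-dimensional ODE $\psi_0''=F(\psi_0)$ with $\psi_0'=\psi_0''=0$ at the level of $\Gamma_0$, whose uniqueness forces $\psi_0\equiv c_0$, a contradiction. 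The same collapse gives the analogue of Lemma \ref{lemma:nonzeroFc0}, namely $F(c_0)\neq 0$, whence by \cite[Lemma 2.3]{drivas2024geometric} the set $\Gamma_0$ is a smooth closed non-contractible curve; and since $\psi_0$ is constant with $\nabla\psi_0=0$ along $\Gamma_0$, its tangential and mixed second derivatives vanish there, so the second normal derivative satisfies $\partial_{nn}\psi_0=\Delta\psi_0=F(c_0)\neq 0$ on $\Gamma_0$ --- no moving-plane argument being needed in this coordinate-free form. From here I would work in Fermi (normal) coordinates $(s,\rho)$ in a collar neighbourhood of $\Gamma_0$, with $\Gamma_0=\{\rho=0\}$, where $\partial_\rho^2\psi_0\neq0$ plays the role of $\partial_y^2\psi_0\neq 0$ and the closedness of $\Gamma_0$ the role of periodicity in $x$.

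Next, the asymptotic expansion. Parametrize nearby boundaries by $\Gamma_g^\ve=\lbrace p+\ve g(p)\hat n_g(p):p\in\Gamma_g\rbrace$ and $\Gamma_h^\ve=\lbrace q+\ve h(q)\hat n_h(q):q\in\Gamma_h\rbrace$ with $g\in C^{2,\alpha}(\Gamma_g)$, $h\in C^{2,\alpha}(\Gamma_h)$. The extension-and-fixed-point scheme behind Theorem \ref{thm:defPsive} and Proposition \ref{prop:boundsrve} is insensitive to the shape of the domain, so $\psi_\ve=\psi_0+\ve\varphi+r_\ve$ with $\Vert r_\ve\Vert_{C^{2,\alpha}}\lesssim\ve^2$, where $\varphi$ solves $(\Delta-F'(\psi_0))\varphi=0$ with $\varphi|_{\Gamma_h}=-h\,\partial_n\psi_0|_{\Gamma_h}$ and $\varphi|_{\Gamma_g}=-g\,\partial_n\psi_0|_{\Gamma_g}$, the analogue of \eqref{eq:HGvarphi}. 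Then: $\partial_\rho^2\psi_0\neq0$ on $\Gamma_0$ and the implicit function theorem produce a unique singular-candidate curve $\Gamma_\ve$ of $\psi_\ve$ with $\operatorname{dist}_{C^1}(\Gamma_\ve,\Gamma_0)\lesssim\ve$ (analogue of Lemma \ref{lemma:ApproxGammave}); if $\psi_{\ve_n}$ is laminar for a sequence $\ve_n\to0$, Taylor-expanding $\tfrac{\rmd}{\rmd s}\psi_{\ve_n}$ along $\Gamma_{\ve_n}$ and using $\nabla\psi_0=0$ on $\Gamma_0$ forces $\varphi|_{\Gamma_0}=\mathrm{const}$ (analogue of Lemma \ref{lemma:laminarcontradiction}); and $\B_{\Gamma_g,\Gamma_h}:=\lbrace(g,h):\varphi_{g,h}|_{\Gamma_0}\neq\mathrm{const}\rbrace$ is open (Schauder continuity of $(g,h)\mapsto\varphi_{g,h}$) and dense: by linearity of $\varphi_{g,h}$ in $(g,h)$ it suffices to produce one member, which is done as in Lemma \ref{lemma:BGHdense} by testing with $g=0$ and two functions on $\Gamma_h$ that are independent modulo constants, together with unique continuation \cite{nadirashvili1986uniqueness} and $\partial_n\psi_0|_{\Gamma_h}\not\equiv0$. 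Assembling these exactly as in \S\ref{sec:islandgenericdomain} yields the density of $\B$; its openness follows from the four-case analysis of Proposition \ref{islandsopen}, each step of which is local around a contractible streamline of $\psi_0$ and hence oblivious to the annular topology.

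The main point to get right is the geometric bookkeeping, not any new analytic estimate: one must check that the rigidity statements of \cite{drivas2024geometric,drivas2024islands} really do hold for annular domains bounded by arbitrary closed non-contractible $C^{2,\alpha}$ curves (rather than vertical graphs over $\mathbb{T}$) --- in particular that vanishing of the velocity on one boundary component still forces the full symmetry --- and then recast every computation of \S\ref{sec:islandgenericdomain} that used the graphs $y_0(x)$, $y_\ve(x)$ in the coordinate-free language of a collar of $\Gamma_0$. For the $\R^2$ version one also notes that a closed streamline separating $\Gamma_g$ from $\Gamma_h$ still admits such a collar, so the same picture applies there.
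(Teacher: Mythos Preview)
Your proposal is correct and follows essentially the same approach as the paper: the paper's own treatment of Theorem \ref{thm:annulargenericityisland} is a brief sketch stating that ``the program to prove Theorem \ref{thm:annulargenericityisland} follows closely that of Theorem \ref{thm:genericityisland}'' and then outlining exactly the steps you describe --- $\Gamma_0$ is a closed non-contractible curve by Lemma \ref{lemma:nonzeroFc0}, the asymptotic expansion goes through, the first-order correction must be constant on $\Gamma_0$ if no islands form, and one argues as in Lemma \ref{lemma:BGHdense} for an open dense set of perturbations. Your write-up is in fact more detailed than the paper's; in particular your observation that $\partial_{nn}\psi_0=F(c_0)\neq0$ on $\Gamma_0$ follows directly (since tangential and mixed second derivatives vanish there), so that the moving-plane step used in Lemma \ref{lemma:nonzeroFc0} to force a graph representation is unnecessary once one works in Fermi coordinates, is a clean simplification.
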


The program to prove Theorem \ref{thm:annulargenericityisland} follows closely that of Theorem \ref{thm:genericityisland}. It involves a precise understanding of any potential singular streamline $\Gamma_0$ of $\psi_0$. By Lemma \ref{lemma:nonzeroFc0}, $\Gamma_0$ is a closed non-contractible curve and the previous arguments of the section apply, showing that the solution $\psi_\ve$ associated to any boundary perturbation can be understood as the perturbation of the base solution $\psi_0$ by a first order linear term and a second order correction. As before, the first order correction of nearby solutions satisfies a linear elliptic equation with boundary data linearly determines by the boundary perturbations and it must be constant on the singular streamline $\Gamma_0$ of $\psi_0$ to prevent the appearance of islands. Then, one can argues as in Lemma \ref{lemma:BGHdense} to show this is not the case for an open dense set of boundary perturbations, we omit the routine details.

We remark again that in general the complement of $\mathcal{B}$ is not empty. Indeed,   Cauchy-Kovalevskaya provides examples of a steady laminar Euler flow that solves \eqref{eq:steadyEulerDannular} for $F$ real analytic in a compact doubly connected domain $D_{\Gamma_g,\Gamma_h}$ with non-circular boundaries $\Gamma_g$ and $\Gamma_h$ and zero velocity on $\Gamma_h$.  Also \cite[Theorem 4.12]{elgindi2024classification}  for a particular lower regularity $F$.

\section{Islands for small perturbations of the periodic channel}\label{sec:pert}
In this section we further investigate the appearance of islands for small perturbations of $D_0=\mathbb{T}\times(-1,1)$.  We first assume that $c_h=c_g=0$ (which just amounts to a possible shift in $F$ if $c_h=c_g\neq 0$).  This is the assumption of trivial homology which was studied in \cite{drivas2024islands}. Under this assumption, we can relax the assumption on $F$ so that the problem
\begin{equation}\label{eq:linearpdeF'}
\begin{cases}
\Delta\varphi - F'(\psi) \varphi = 0 & \text{ in } \mathbb{T}\times[-1,1], \\
\varphi(x,1)=0 & \text{ for } x\in\mathbb{T}, \\
\varphi(x,-1)=0 & \text{ for } x\in\mathbb{T}.
\end{cases}
\end{equation}
 admits only the trivial solution. In the end of the section we show how to treat the general case $c_h\neq c_g$. 
Let $\psi_0(x,y):\mathbb{T}\times[-1,1]\rightarrow\R$ be any non-zero solution to 
\begin{equation}\label{eq:Psi0}
\begin{cases}
\Delta\psi_0 = F(\psi_0) & \text{ in } \mathbb{T}\times[-1,1], \\
\psi_0(x,\pm 1)=0 & \text{ for } x\in\mathbb{T}.
\end{cases}
\end{equation}
Note that the existence of a critical point for $\psi_0$ is already ensured by the trivial homology.  
Since $\partial_x\psi_0$ is a solution to the linearised problem, which admits only the trivial solution,  we obtain that $\psi_0(x,y) = \psi_0(y)$ is a shear flow, and from Theorem 2.2 in \cite{bcn97} we deduce that $\psi_0(y)$ is symmetric with respect to $y=0$ and thus $\psi_0'(0)=0$. Moreover, there holds $\psi_0'(1)\neq 0$. Otherwise,  $\Phi_0(y):=\psi_0'(y)$ satisfies
\begin{equation}\label{eq:Phi0}
\begin{cases}
\Phi_0'' = F'(\psi_0)\Phi_0 & \text{ in } [-1,1], \\
\Phi_0(\pm 1)=0, & 
\end{cases}
\end{equation}
for which $\Phi_0\equiv 0$ and hence $\psi_0\equiv 0$, a contradiction. Using the description of $\psi_\ve$ from Theorem \ref{thm:defPsive}, the condition on $\varphi_{g,h}$ from Lemma \ref{lemma:laminarcontradiction} now reads $\varphi(x,0)$ is constant, since $\Gamma_0 = \lbrace y = 0\rbrace$. The next result determines  $\varphi(x,0)$ in terms of the boundary defining functions $(g,h)$. More precisely, for
\begin{equation}\label{eq:defBprime}
\mathcal{B}'=\lbrace (g,h)\in (C^2(\mathbb{T}))^2 : h'(x)+g'(x)\not\equiv 0 \rbrace,
\end{equation}
we have the following.

\begin{lemma}\label{lemma:pxvarphi0}
Let $(g,h)\in (C^{2,\alpha}(\mathbb{T}))^2$ and let $\varphi(x,y)$ be the solution to \eqref{eq:HGvarphi} in $D_0=\mathbb{T}\times(-1,1)$. Then, $\partial_x\varphi(x,0)\not \equiv 0$ if and only if $(g,h)\in \B'$.
\end{lemma}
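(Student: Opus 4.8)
The plan is to solve the linear problem \eqref{eq:HGvarphi} explicitly by Fourier series in $x$, exploiting that $\psi_0 = \psi_0(y)$ is a shear and hence $F'(\psi_0)$ depends only on $y$. Writing $\varphi(x,y) = \sum_{k\in\Z} \widehat{\varphi}_k(y) e^{ikx}$, the equation $\Delta\varphi - F'(\psi_0(y))\varphi = 0$ decouples into the family of ODEs $\widehat{\varphi}_k'' - (k^2 + F'(\psi_0(y)))\widehat{\varphi}_k = 0$ on $[-1,1]$, with boundary data $\widehat{\varphi}_k(\pm 1) = -\psi_0'(\pm 1)\,\widehat{h}_k$ at $y=1$ and $\widehat{\varphi}_k(-1) = -\psi_0'(-1)\,\widehat{g}_k$ (using $\partial_y\psi_0(x,\pm1) = \psi_0'(\pm1)$, which is nonzero by the discussion preceding the lemma). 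Since $\psi_0$ is even in $y$, $\psi_0'(-1) = -\psi_0'(1)$, so setting $\beta := \psi_0'(1) \neq 0$ the top data is $-\beta\,\widehat{h}_k$ and the bottom data is $+\beta\,\widehat{g}_k$. What we must compute is $\partial_x\varphi(x,0) = \sum_k ik\,\widehat{\varphi}_k(0) e^{ikx}$, so the lemma reduces to showing $\widehat{\varphi}_k(0) = 0$ for all $k\neq 0$ if and only if $\widehat{h}_k + \widehat{g}_k = 0$ for all $k\neq 0$, i.e. $h' + g' \equiv 0$.

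The key structural input is a parity/symmetry argument in $y$. For each fixed $k$, let $\Phi_k$ denote the unique (by elliptic stability, since $F' > -\lambda_1 \geq -\lambda_1^{(k=0)}$) solution with boundary values $1$ at both endpoints — or more usefully, decompose $\widehat{\varphi}_k = A_k(y) + B_k(y)$ where $A_k$ is the solution of the ODE with data $(A_k(-1),A_k(1)) = (\tfrac12, \tfrac12)(\widehat{g}_k - \widehat{h}_k)\beta$ ... actually cleaner: split the boundary data into its even and odd parts. Let $s_k := \tfrac12\beta(\widehat{g}_k - \widehat{h}_k)$ (the part giving boundary values $(s_k, -s_k)$, odd) and $a_k := \tfrac12\beta(\widehat{g}_k + \widehat{h}_k)\cdot(-1)$ wait — with top $=-\beta\widehat h_k$, bottom $=+\beta\widehat g_k$: the even part of the pair $(\text{bottom},\text{top})$ is $\tfrac12\beta(\widehat g_k - \widehat h_k)$ at each endpoint and the odd part is $\tfrac12\beta(\widehat g_k + \widehat h_k)$ at $y=-1$ and $-\tfrac12\beta(\widehat g_k + \widehat h_k)$ at $y=1$. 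Since the ODE coefficient $k^2 + F'(\psi_0(y))$ is even in $y$, the solution with even boundary data is even in $y$ and the solution with odd boundary data is odd in $y$. Therefore $\widehat{\varphi}_k(0)$ equals the value at $0$ of the \emph{even} solution, which is nonzero precisely when its boundary data $\tfrac12\beta(\widehat g_k - \widehat h_k)$ is nonzero — here one uses that a nontrivial solution of $u'' = (k^2 + F'(\psi_0))u$ with $u(\pm1)$ equal and nonzero cannot vanish at $0$, because if $u(0)=0$ then by evenness $u'(0)=0$ too, forcing $u\equiv 0$ by ODE uniqueness. Hence $\widehat{\varphi}_k(0) \neq 0 \iff \widehat g_k \neq \widehat h_k$ for $k \neq 0$...

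Here is the subtlety and the main obstacle: the lemma's condition is $h' + g' \not\equiv 0$, i.e. $\widehat g_k + \widehat h_k \neq 0$ for some $k\neq 0$, whereas the naive computation above produced $\widehat g_k - \widehat h_k$. The resolution must come from relating the two boundary conditions more carefully — in particular, one cannot prescribe $\widehat\varphi_k$ at \emph{both} endpoints independently while also reading off $\partial_x\varphi(x,0)$; rather, I expect the correct bookkeeping uses $\partial_y\psi_0(x, H(x)) = \partial_y\psi_0(x,1)$ with the \emph{sign} of the normal derivative and the relation $\psi_0'(1) = -\psi_0'(-1)$ consistently, so that the top boundary contributes $-\psi_0'(1)h$ and the bottom contributes $-\psi_0'(-1)g = +\psi_0'(1)g$, making the relevant even combination proportional to $g - (-h)$... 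I will need to track this one sign with care, since it is exactly what converts $g - h$ into $g + h$ up to the orientation convention in \eqref{eq:HGvarphi}; modulo that sign, the argument is: (i) Fourier-decouple; (ii) use evenness of the coefficient to split into even/odd solutions in $y$; (iii) observe the odd part vanishes at $y=0$ automatically, so $\partial_x\varphi(x,0)$ sees only the even part; (iv) apply ODE uniqueness to conclude the even part is nonzero at $y=0$ iff its (constant) boundary value is nonzero; (v) identify that boundary value, across all $k \neq 0$, with the Fourier coefficients of $h' + g'$. The $k=0$ mode is irrelevant since $\partial_x\varphi$ kills it. Step (iv)'s ODE uniqueness and step (ii)'s symmetry are the real content; everything else is bookkeeping.
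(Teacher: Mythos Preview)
Your approach is correct and shares the paper's key structural idea: decompose $\varphi$ into its even and odd parts in $y$ (using that $F'(\psi_0(y))$ is even), observe that the odd part vanishes at $y=0$, and then argue that the even part vanishes at $y=0$ only when its boundary data does. The paper carries this out directly at the PDE level --- it sets $\phi=\partial_x\varphi_e$, notes $\phi(x,0)=\partial_y\phi(x,0)=0$, and invokes a unique continuation lemma (Lemma~3 of \cite{drivas2024islands}) to force $\phi\equiv 0$, hence $f' \equiv 0$. You instead Fourier-decompose in $x$ and reduce each mode to an ODE, replacing the unique continuation step by plain ODE uniqueness (even solution with $u(0)=0$ has $u'(0)=0$, hence $u\equiv 0$). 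Your route is a bit more elementary and avoids the external citation; otherwise the two proofs are the same.

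On the sign issue you flag: this is not a flaw in your reasoning but an inconsistency in the paper's conventions. Equation \eqref{eq:HGvarphi} as written, specialized to $G=-1,H=1$ with $\psi_0'(-1)=-\psi_0'(1)$, indeed gives even boundary data $\tfrac{1}{2}\psi_0'(1)(g-h)$, as you computed. However, the paper's own proof writes $f(x)=-\psi_0'(1)\tfrac{h(x)+g(x)}{2}$, and Section~\ref{sec:sizeislands} uses $D_\ve=D_{-1-\ve g,\,1+\ve h}$ (bottom perturbation $-\ve g$ rather than $+\ve g$). With that sign convention the bottom boundary condition becomes $\varphi(x,-1)=+g(x)\psi_0'(-1)=-g(x)\psi_0'(1)$, and the even part is $-\tfrac{1}{2}\psi_0'(1)(h+g)$, matching both the paper's $f$ and the set $\B'$. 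So once you adopt the intended convention your step~(v) resolves cleanly: $\widehat\varphi_k(0)=0$ for all $k\neq 0$ iff $\widehat h_k+\widehat g_k=0$ for all $k\neq 0$, i.e.\ $h'+g'\equiv 0$.
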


\begin{proof}
Firstly, we decompose $\varphi(x,y) = \varphi_e(x,y) + \varphi_o(x,y)$ where $\varphi_e$ and $\varphi_o$ denote the even symmetric and odd symmetric parts of $\varphi$ with respect to $y=0$. By definition, there clearly holds $\varphi_o(x,0) = 0$ and $\partial_y\varphi_e(x,0) = 0$, for all $x\in\mathbb{T}$ so that, in particular, $\varphi(x,0) = \varphi_e(x,0)$. Thus, we study $\varphi_e$, which is the unique solution to
 \begin{align}\label{eq:ellipticevenF}
\begin{cases}
\Delta\varphi_e-F'(\psi_0)\varphi_e= 0 & \text{ in } \mathbb{T}\times[0,1], \\
\varphi_e(x,1)= f(x) & \text{ for } x\in \mathbb{T}, \\
\partial_y\varphi_e(x,0)= 0 & \text{ for } x\in\mathbb{T}. \\
\end{cases}
\end{align}
for $f(x) = -\psi_0'(1)\frac{h(x)+ g(x)}{2}$, because $\psi_0$ is even. Now, if $\partial_x\varphi_e(x,0)=0$, for all $x\in \mathbb{T}$, let $\phi:=\partial_x\varphi$, which satisfies
 \begin{align}\label{eq:ellipticphi}
\begin{cases}
\Delta\phi-F'(\psi_0)\phi= 0 & \text{ in } \mathbb{T}\times[0,1], \\
\phi(x,0)=0 & \text{ for } x\in \mathbb{T}, \\
\partial_y\phi(x,0)= 0 & \text{ for } x\in\mathbb{T}. \\
\end{cases}
\end{align}
From Lemma 3 in \cite{drivas2024islands}, we have $\phi=0$ in $\mathbb{T}\times[0,1]$ and thus $\partial_x\varphi_e(x,1)=0$. Since $\psi_0'(1)\neq 0$, we deduce that $f(x)$ must be constant. In other words,
\begin{align}\label{eq:islandcondition}
h(x) + g(x) = c_0, \quad \text{ for all }x\in \mathbb{T},
\end{align}
for some $c_0\in \R$. With this, the lemma is proved.
\end{proof}

\begin{remark}[Possible existence of laminar flow]

It is not clear either whether there exist laminar solutions $\psi$ of \eqref{eq:steadyEulerDve}, even if one takes $h$ and $g$ to be $\ve$ small. In this direction, Lemma \ref{lemma:laminarcontradiction} requires $h(x)+g(x) = f_0$, for some $f_0\in \R$ for any such laminar solution to exist.  We ask:
\begin{question}
For all $\ve>0$ sufficiently small does there exist $g,h:\mathbb{T}\rightarrow \R$ and $F$ such that the solution $\psi$ to \eqref{eq:steadyEulerDve} posed in $D_{-1-\ve g,1+\ve h}$ does not have islands?
\end{question}
Supporting evidence can be found in the case of solutions close to the Couette flow, which solves \eqref{eq:steadyEulerDve} posed in $\mathbb{T}\times(-1,1)$ with $F=-1$ and  $\psi_0 = -\frac12(y^2-1)$. Then, the equation for the first order correction  now reads
\begin{equation*}
\begin{cases}
\Delta\varphi = 0 & \text{ in } \mathbb{T}\times [-1,1], \\
\varphi(x,1)=h(x), & x\in\mathbb{T} \\
\varphi(x,-1)= g(x), & x\in \mathbb{T}
\end{cases}
\end{equation*}
whose solution is 
\begin{align*}
\varphi(x,y) = \sum_{n\neq 0} \frac{1}{2\sinh(n)}\left( h_n - g_n \right) \sinh(ny)e^{inx} + \sum_{n\in \Z} \frac{1}{2\cosh(n)}\left( h_n + g_n \right) \cosh(ny)e^{inx}
\end{align*}
where $h(x) = \sum_{n\in \Z} h_ne^{inx},$ and  $g(x) = \sum_{n\in \Z} g_ne^{inx}.$
It is instructive to note that for any singular streamline $\Gamma_\ve$ to exist, we must have $h_n+g_n=0$ for all $n\neq 0$ and also
\begin{align*}
y_\ve(x) &= \ve \partial_y\varphi(x,y_\ve(x)) + \partial_yr_\ve(x,y_\ve(x)) = \ve \partial_y\varphi(x,0) + O(\ve^2)
\end{align*}
so that $\frac{y_\ve(x)}{\ve}\rightarrow \partial_y\varphi(x,0)$ as $\ve\rightarrow 0$. In other words, the limiting rescaled $\Gamma_\ve$ converges to the curve $(x,\partial_y\varphi(x,0))$, which is uniquely prescribed by the boundary data $(g,h)$ by 
\begin{align*}
\partial_y\varphi(x,0) = \sum_{n\neq0 } \frac{nh_n}{\sinh(n)}e^{inx}
\end{align*}
and clearly indicates there is a unique correspondence between boundary data and any potentially singular streamline. It is thus conceivable that there are non-flat domains which support laminar flows having stagnation sets, but we do not resolve this point. {It is worth mentioning here \cite[Theorem 4.12]{elgindi2024classification}, which shows the existence of Arnold stable steady laminar solutions of the Euler equations on narrow compact doubly connected non-circular domains that stagnate along the outer boundary and satisfy \eqref{eq:steadyEulerDannular} for some non-linearity $F\in C^{2,\frac12}$. It would be interesting to see if the solutions constructed in \cite{elgindi2024classification} can be extended to occupy larger domains.}
\end{remark}

\subsection{Islands for non-trivial homology}
Here we show Theorem \ref{thm:genericityisland} for the case $c_h\neq c_g$. Since $\psi_0(x,1)=c_h$ and $\psi_0(x,-1)=c_g$, the assumed elliptic stability of $F'(\psi_0)$ gives $\psi_0(x,y)=\psi_0(y)$, as in the trivial homology setting. Hence, by the stagnation assumption, there exists some $y_0\in (-1,1)$ such that $\psi_0'(y_0)=0$. We claim that such $y_0$ is unique.

Indeed, let $y_1\in [-1,1]$, $y_1\neq y_0$ be any other critical point. Assuming, say, that $y_1>y_0$, we have that $\Phi_0(y) = \psi_0'(y)$ satisfies
\begin{equation}
\begin{cases}
\Phi_0'' = F'(\psi_0)\Phi_0 & \text{ in } [y_0,y_1], \\
\Phi_0(y_0)=\Phi(y_1)=0, & 
\end{cases}
\end{equation}
Then again, the elliptic stability of $F'$ shows that $\Phi_0(y)=0$ for all $y\in [y_0,y_1]$. In particular, this shows that $\psi_0(y) = \psi_0(y_0)$ and $F(\psi_0(y_0))=0$, for all $y\in [y_0,y_1]$. Hence, $\psi_0(y)=\psi_0(y_0)$, for all $y\in [-1,1]$, but this contradicts $c_g\neq c_h$.

We next observe that $\psi_0$ must be symmetric about $y=y_0$. Indeed, since $F(\psi_0(y_0))\neq 0$ and $\psi_0$ has only one critical point, then $\psi_0$ must attain either $c_h$ or $c_g$ only once inside. Say $\psi_0$ attains $c_h$ at some $y_1\in (-1,1)$. Then, in $(y_1,1)$ we have
\begin{equation}
\begin{cases}
\psi_0'' = F(\psi_0) & \text{ in } [y_1,1], \\
\psi_0(y_1)=\psi(1)=c_h, & 
\end{cases}
\end{equation}
and from \cite[Theorem 2.2]{bcn97} we deduce that $\psi_0$ must be symmetric with respect to $y_*=\frac{1+y_1}{2}$, with $\psi_0'(y_*)=0$. By uniqueness, it must be $y_*=y_0$ so that $y_1=2y_0-1$. Next, we argue as in the trivial homology setting, so that the analogue of Theorem \ref{thm:defPsive} holds and we have that for all $(g,h)\in (C^{2,\alpha}(\mathbb{T}))^2$. the unique solution $\psi_\ve:D_\ve\rightarrow \R$ to \eqref{eq:steadyEulerDve} in $D_\ve$ can be written as
\begin{equation}
\psi_\ve(x,y) = {\psi_0}(y) + \ve {\varphi}(x,y) + r_\ve(x,y),
\end{equation}
where $\varphi(x,y)$ solves
\begin{equation}\label{eq:varphihomo}
\begin{cases}
\left(\Delta-F'(\psi_0)\right)\varphi = 0 & \text{ in } \mathbb{T}\times[-1,1], \\
\varphi(x,1)= -h(x)\psi_0'(1) & \text{ for } x\in \mathbb{T}, \\
\varphi(x,-1)= g(x)\psi_0'(-1) & \text{ for } x\in\mathbb{T}, \\
\end{cases}
\end{equation}
with $\Vert {\varphi} \Vert_{C^{2,\alpha}(D_\ve)}\lesssim 1$ and $\Vert r_\ve \Vert_{C^{2,\alpha}(D_\ve)}\lesssim \ve^2$.  The obvious modifications to Lemma \ref{lemma:laminarcontradiction} show that the presence of islands for $\psi_\ve$ for all $\ve>0$ small enough is conditional on $\varphi(x,y_0)$ being non-constant. Then, Theorem \ref{thm:genericityisland} is established once we prove
\begin{lemma}\label{lemma:genericityislandnontrivialhom}
There exists an open and dense set $\mathcal{B}\subset (C^{2,\alpha}(\mathbb{T}))^2$ such that for all $(g,h)\in \mathcal{B}$, the solution $\varphi(x,y)$ associated to \eqref{eq:varphihomo} is non-constant along $y=y_0$.
\end{lemma}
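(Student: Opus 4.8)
The plan is to mimic the proof of Lemma~\ref{lemma:BGHdense}, exploiting that here $\Gamma_0=\lbrace y=y_0\rbrace$ is a horizontal line and that $\psi_0'(\pm 1)\neq 0$ — the latter because, as shown above, $y_0$ is the \emph{only} critical point of $\psi_0$ in $[-1,1]$, so $\psi_0'$ cannot vanish at an endpoint. Take $\mathcal{B}:=\lbrace (g,h)\in (C^{2,\alpha}(\mathbb{T}))^2: \varphi_{g,h}(\cdot,y_0)\neq\text{const}\rbrace$, where $\varphi_{g,h}$ solves \eqref{eq:varphihomo}. Openness is immediate: "$\varphi_{g,h}(\cdot,y_0)$ non-constant'' is an open condition for $C^0(\mathbb{T})$-convergence, and by the classical Schauder estimates for \eqref{eq:varphihomo} the linear map $(g,h)\mapsto\varphi_{g,h}\in C^{2,\alpha}(\mathbb{T}\times[-1,1])$ is bounded, so $(g,h)\mapsto\varphi_{g,h}(\cdot,y_0)$ is continuous. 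Since this map is also linear, $\mathcal{B}$ is either empty or dense: if $(\bar g,\bar h)\in\mathcal{B}$, then for any $(g_*,h_*)$ the function $\varphi_{g_*,h_*}(\cdot,y_0)+t\,\varphi_{\bar g,\bar h}(\cdot,y_0)$ is non-constant for all but at most one $t\in\R$ (as $\varphi_{\bar g,\bar h}(\cdot,y_0)$ is non-constant), so $\mathcal{B}$ accumulates at $(g_*,h_*)$. It therefore remains to show $\mathcal{B}\neq\emptyset$.

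For this I would test with the two data $(g,h_1)=(0,1)$ and $(g,h_2)=(0,\cos x)$ and argue by contradiction: suppose $\varphi_{0,1}|_{y=y_0}\equiv a_1$ and $\varphi_{0,\cos x}|_{y=y_0}\equiv a_2$ for some constants $a_i\in\R$. Put $h:=a_2-a_1\cos x$ when $(a_1,a_2)\neq(0,0)$ and $h:=1$ otherwise; then $h\not\equiv 0$ and, by linearity, $\varphi_{0,h}$ vanishes identically on $\lbrace y=y_0\rbrace$. Since $g=0$, \eqref{eq:varphihomo} also gives $\varphi_{0,h}(x,-1)=g(x)\psi_0'(-1)=0$. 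Hence $\varphi_{0,h}$ solves $(\Delta-F'(\psi_0))\varphi_{0,h}=0$ in the rectangle $\mathbb{T}\times(-1,y_0)$ with zero Dirichlet data, and by domain monotonicity of the first Dirichlet eigenvalue, $\lambda_1(\mathbb{T}\times(-1,y_0))\geq\lambda_1(\mathbb{T}\times(-1,1))=\lambda_1$, so the elliptic stability $F'(\psi_0)>-\lambda_1$ forces $\varphi_{0,h}\equiv 0$ there. Then $\varphi_{0,h}$ vanishes on an open subset of the connected strip $\mathbb{T}\times(-1,1)$, so the unique continuation principle for $(\Delta-F'(\psi_0))\varphi_{0,h}=0$ with bounded potential (cf.~\cite{nadirashvili1986uniqueness}) gives $\varphi_{0,h}\equiv 0$ throughout. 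In particular $\varphi_{0,h}(x,1)=-h(x)\psi_0'(1)=0$ for all $x$, contradicting $\psi_0'(1)\neq 0$ and $h\not\equiv 0$. Hence one of $(0,1),(0,\cos x)$ lies in $\mathcal{B}$.

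With $\mathcal{B}$ open and dense, the lemma — and, via the straightforward analogue of Lemma~\ref{lemma:laminarcontradiction} with $\Gamma_0=\lbrace y=y_0\rbrace$, Theorem~\ref{thm:genericityisland} in the case $c_h\neq c_g$ — follows. I expect the whole argument to be routine; the only two points deserving a word of justification are the monotonicity of the first Dirichlet eigenvalue under passage to the sub-rectangle $\mathbb{T}\times(-1,y_0)$ (which is what makes the elliptic stability hypothesis usable there), and the invocation of unique continuation from a vanishing open set — both standard.
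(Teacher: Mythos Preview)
Your argument is correct and is essentially a verbatim replay of the general Lemma~\ref{lemma:BGHdense}, specialized to the flat strip with $\Gamma_0=\{y=y_0\}$: you test with $(0,1)$ and $(0,\cos x)$, combine them so that $\varphi_{0,h}$ vanishes on $\{y=y_0\}$, use elliptic stability on the sub-rectangle to kill $\varphi_{0,h}$ there, and then invoke unique continuation to propagate the vanishing to the full strip, contradicting the nonzero top boundary datum. All steps are sound; in particular the domain-monotonicity inequality $\lambda_1(\mathbb{T}\times(-1,y_0))\geq\lambda_1(\mathbb{T}\times(-1,1))$ is what makes the hypothesis $F'>-\lambda_1$ usable on the subdomain, and the appeal to~\cite{nadirashvili1986uniqueness} is legitimate since the potential $F'(\psi_0)$ is bounded.

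The paper's own proof is genuinely different. Instead of unique continuation, it exploits the additional structure available in the nontrivial-homology setting, namely that $\psi_0$ is \emph{symmetric} about $y=y_0$ (so $F'(\psi_0)$ is too). Taking $g\equiv g_0$ constant and any $h$ with $h'\not\equiv 0$, the paper assumes $\partial_x\varphi(\cdot,y_0)\equiv 0$, uses elliptic stability to get $\partial_x\varphi\equiv 0$ on $\mathbb{T}\times[-1,y_0]$, then reflects $\varphi$ about $y=y_0$ on $\mathbb{T}\times(2y_0-1,1)$. The even part $\varphi_e$ has both $\partial_x\varphi_e$ and $\partial_{xy}\varphi_e$ vanishing on $\{y=y_0\}$, and Lemma~3 of \cite{drivas2024islands} (Cauchy uniqueness from a flat line) forces $\partial_x\varphi_e\equiv 0$ above $y_0$; since $\partial_x\varphi(x,2y_0-1)=0$ (this height lies in the lower half where $\partial_x\varphi$ already vanishes), one reads off $\partial_x\varphi_e(x,1)=-\tfrac{1}{2}\psi_0'(1)h'(x)\neq 0$, a contradiction. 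This route avoids unique continuation entirely and yields a sharper statement: \emph{every} pair $(g_0,h)$ with $g_0$ constant and $h$ non-constant lies in $\mathcal{B}$. Your approach is more portable (it is exactly what works in the general curved-channel case) but gives less explicit information about $\mathcal{B}$.
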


\begin{proof}
    The open statement is straightforward due to classical Schauder estimates.  As for the density statement, we observe that solutions $\varphi$ to  \eqref{eq:varphihomo} are linear in $(g,h)$. Thus, it is enough to show there exists $(g,h)\in (C^{2,\alpha}(\mathbb{T}))^2$ such that $\varphi(x,y_0)$ is non-constant.  In this direction, let $g(x)\equiv g_0$ for some $g_0\in \R$ and $h'(x)\neq 0$. We claim that $\partial_x\varphi(x,y_0)\neq 0$. 

Assume otherwise. Then, $\partial_x\varphi(x,y)$ satisfies
\begin{equation}
\begin{cases}
\left(\Delta-F'(\psi_0)\right)\partial_x\varphi = 0 & \text{ in } \mathbb{T}\times[-1,y_0], \\
\partial_x\varphi(x,y_0)= 0 & \text{ for } x\in \mathbb{T}, \\
\partial_x\varphi(x,-1)= 0 & \text{ for } x\in\mathbb{T}, \\
\end{cases}
\end{equation}
so that $\partial_x\varphi(x,y)=0$, for all $(x,y)\in \mathbb{T}\times[-1,y_0]$ owing to the elliptic stability of $F'$. On the other hand, since $\varphi(x,y_0)$ is constant, we can consider the even and odd reflections of $\varphi$ with respect to $y=y_0$ in the domain $\mathbb{T}\times(2y_0-1,1)$. As usual, $\varphi_o(x,y_0)$ is trivially zero and thus we have $\partial_x\varphi_e(x,y_0)=0$ and also $\partial_x\partial_y\varphi_e(x,y_0)=0$. Again using Lemma 3 from \cite{drivas2024islands}, we conclude that $\partial_x\varphi_e(x,y)\equiv 0$ on $\mathbb{T}\times(y_0,1)$ and in particular this means that $\partial_x\varphi_e(x,1)=0$. However, 
\begin{align*}
\varphi_e(x,1) = \frac{-\psi_0'(1)h(x) + \varphi(x,2y_0-1)}{2}
\end{align*}
and $-1<2y_0-1<y_0$. Since $\partial_x\varphi(x,2y_0-1) = 0$, we conclude that $\partial_x\varphi(x,1) = -\frac{\psi_0'(1)}{2}h'(x)\neq 0$, for any non-constant $h$. Hence a contradiction and thus $\varphi(x,y_0)$ is not constant, namely $(g,h)\in \mathcal{B}$.
\end{proof}

\section{Size of Islands}\label{sec:sizeislands}
In this section we consider $D_0=\mathbb{T}\times(-1,1)$ and $D_\ve=D_{-1-\ve g, 1+\ve h}$, for some $(g,h)\in \mathcal{B}$ given by \eqref{eq:defBprime} or Lemma \ref{lemma:genericityislandnontrivialhom} according to the homology. We locate islands of fluid and characterise their size in terms of the boundary perturbations. To do so, we first identify points in $D_\ve$ for which islands appear nearby. For all $\ve>0$ small enough, the condition $F(\psi_0(s))\neq 0$ whenever $\psi_0'(s)=0$ provides the existence of some curve $\Gamma_\ve=\lbrace (x,y_\ve(x)):\, x\in \mathbb{T} \rbrace$ for which $\partial_y\psi_\ve=0$ on $\Gamma_\ve$.  Such curve is $\ve$ close to the critical point $y=y_0$ of $\psi_0$. Throughout the section, we assume that $F(\psi_0(y_0))<0$.

\begin{lemma}\label{lemma:maximizerlocation}
There exists $\delta_0>0$ such that $\psi_\ve:\mathbb{T}\times[y_0-\delta_0,y_0+\delta_0]\rightarrow \R$ attains its maximum in the interior. Moreover, its maximizers are close to global maximizers of $\varphi(\cdot,y_0)$, for $\ve>0$ small enough. 
\end{lemma}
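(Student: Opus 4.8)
The plan is to combine the decomposition $\psi_\ve(x,y)=\psi_0(y)+\ve\varphi(x,y)+r_\ve(x,y)$ from Theorem \ref{thm:defPsive}, with $\Vert\varphi\Vert_{C^{2,\alpha}(D_\ve)}\lesssim 1$ and $\Vert r_\ve\Vert_{C^{2,\alpha}(D_\ve)}\lesssim\ve^2$, with the strict concavity of $\psi_0$ at $y_0$. Writing $F_0:=F(\psi_0(y_0))<0$, continuity of $y\mapsto\psi_0''(y)=F(\psi_0(y))$ provides $\delta_0>0$ such that $\psi_0''\le F_0/2<0$ on $[y_0-\delta_0,y_0+\delta_0]$; since $\psi_0'(y_0)=0$, Taylor's theorem gives the strict quadratic gap $\psi_0(y_0)-\psi_0(y)\ge\tfrac{|F_0|}{4}(y-y_0)^2$ on that interval. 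On the two circles $\{y=y_0\pm\delta_0\}$ this gap has fixed size of order $\delta_0^2$, which is what will dominate the $O(\ve)$ perturbation $\ve\varphi+r_\ve$.

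First I would show that the maximum of $\psi_\ve$ over the compact cylinder $\mathbb{T}\times[y_0-\delta_0,y_0+\delta_0]$ cannot be attained on $\{y=y_0\pm\delta_0\}$: there $\psi_\ve\le\psi_0(y_0)-\tfrac{|F_0|}{4}\delta_0^2+C\ve$, whereas for any $x_0\in\mathbb{T}$ one has $\psi_\ve(x_0,y_0)\ge\psi_0(y_0)-C\ve$, so for $0<\ve<\ve_*$ small enough the former is strictly smaller than the latter and the maximum is attained at an interior point. Since $\psi_\ve\in C^{2,\alpha}$ that point is a critical point of $\psi_\ve$, and, shrinking $\delta_0$ further so that $\partial_y^2\psi_\ve=\psi_0''+O(\ve)<0$ on the whole strip, it lies on the curve $\Gamma_\ve=\{(x,y_\ve(x)):x\in\mathbb{T}\}$ from the discussion preceding the lemma (see also Lemma \ref{lemma:ApproxGammave}), which satisfies $\Vert y_\ve-y_0\Vert_{C^0(\mathbb{T})}\lesssim\ve$.

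Next I would pin down the $x$-location. Writing a maximizer as $(x_\ve,y_\ve(x_\ve))$ and using $\psi_0'(y_0)=0$ (hence $\psi_0(y_\ve(x_\ve))=\psi_0(y_0)+O(\ve^2)$) together with the $C^1$-bound on $\varphi$ (hence $\varphi(x_\ve,y_\ve(x_\ve))=\varphi(x_\ve,y_0)+O(\ve)$), one obtains $\psi_\ve(x_\ve,y_\ve(x_\ve))=\psi_0(y_0)+\ve\varphi(x_\ve,y_0)+O(\ve^2)$. Comparing this with $\psi_\ve(x^\star,y_0)=\psi_0(y_0)+\ve\varphi(x^\star,y_0)+O(\ve^2)$, where $x^\star$ is a global maximizer of $\varphi(\cdot,y_0)$, and invoking maximality at $(x_\ve,y_\ve(x_\ve))$, gives $\varphi(x_\ve,y_0)\ge\max_{\mathbb{T}}\varphi(\cdot,y_0)-O(\ve)$. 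Thus $x_\ve$ is an almost-maximizer of the continuous function $\varphi(\cdot,y_0)$ on the compact set $\mathbb{T}$, and a routine compactness argument finishes: for every neighbourhood $U$ of the set $M$ of global maximizers of $\varphi(\cdot,y_0)$ there is $\eta>0$ with $\{x\in\mathbb{T}:\varphi(x,y_0)\ge\max_{\mathbb{T}}\varphi(\cdot,y_0)-\eta\}\subset U$, so $x_\ve\in U$ once $\ve$ is small, which is the asserted closeness (together with $|y_\ve(x_\ve)-y_0|\lesssim\ve$).

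The only point requiring care is separating the two competing scales — the fixed quadratic gap of order $\delta_0^2$ of $\psi_0$ against the $O(\ve)$ perturbation — in the correct order: $\delta_0$ must be chosen first, depending only on $\psi_0$ and $F$, and only afterwards $\ve_*$; once that is done, every estimate is routine bookkeeping with the Schauder bounds of Theorem \ref{thm:defPsive} and the elementary properties of $\Gamma_\ve$ already recorded. The statement deliberately asserts only proximity to the maximizers of $\varphi(\cdot,y_0)$, since without non-degeneracy of those maxima one cannot do better; the quantitative refinement needed to extract the $\sqrt\ve$ island heights under the further hypotheses defining $\B_0$ will be carried out in the remainder of this section.
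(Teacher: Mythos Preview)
Your proposal is correct and follows essentially the same route as the paper. The only cosmetic differences are that the paper argues the maximum is interior by checking the sign of $\partial_y\psi_\ve$ on $\{y=y_0\pm\delta_0\}$ rather than comparing values, and it locates $x_\ve$ by extracting a convergent subsequence $x_\ve\to x_0$ and then showing $x_0$ must be a global maximizer of $\varphi(\cdot,y_0)$, whereas you obtain the almost-maximizer inequality $\varphi(x_\ve,y_0)\ge\max_{\mathbb{T}}\varphi(\cdot,y_0)-O(\ve)$ directly and appeal to compactness; both rest on the same expansion $\psi_\ve(x,y_\ve(x))=\psi_0(y_0)+\ve\varphi(x,y_0)+O(\ve^2)$ along $\Gamma_\ve$.
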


\begin{proof}
Proceeding as in the proof of Lemma \ref{lemma:laminarcontradiction}, there exists $\delta_0>0$ small enough for which any global maximizer $(x_\ve,y_\ve)$ of $\psi_\ve$ in $\mathbb{T}\times[y_0-\delta_0,y_0+\delta_0]$ is interior, since $\partial_y\psi_\ve(x,y_0+\delta_0)<0$, $\partial_y\psi_\ve(x,y_0-\delta_0)>0$ and $\partial_y^2\psi_\ve(x,y)<0$ for all $(x,y)\in \mathbb{T}\times[y_0-\delta_0,y_0+\delta_0]$, for $\ve>0$ small enough. Hence, for any $x\in \mathbb{T}$, there exists a unique $y=y_\ve(x)\in (y_0-\delta_0,y_0+\delta_0)$ such that $\partial_y\psi_\ve(x,y_\ve(x))=0$. By the implicit function theorem, since $\partial_y^2\psi_\ve(x,y)<0$ in $\mathbb{T}\times[y_0-\delta_0,y_0+\delta_0]$ we see that $\Gamma_\ve = \lbrace (x,y_\ve(x)): \, x\in \mathbb{T}\rbrace$ is a $C^1$ curve in $\mathbb{T}\times[y_0-\delta_0,y_0+\delta_0]$.

Next, the map $x\mapsto \psi_\ve(x,y_\ve(x))$ is $2\pi$-periodic, and thus attains its maximum and minimum. Let $(x_\ve,y_\ve)\in \Gamma_\ve$ be any such maximum, there holds
\begin{align*}
\psi_\ve(x,y_\ve(x))=\psi_0(y_\ve(x)) + \ve \varphi(x,y_\ve(x)) + r_\ve (x,y_\ve(x)) \leq \psi_0(y_\ve) + \ve \varphi(x_\ve,y_\ve) + r_\ve (x_\ve,y_\ve) = \psi_\ve(x_\ve,y_\ve)
\end{align*}
for all $x\in\mathbb{T}$. As $\ve\rightarrow 0$ we have $x_\ve\rightarrow x_0$ up to a subsequence and then
\begin{align*}
\varphi(x,y_0)\leq \varphi(x_0,y_0) + O(|x_\ve - x_0|) + O(\ve)
\end{align*}
for all $x\in \mathbb{T}$. In particular, this shows that $x_0$ must be a global maximum of $\varphi(x,y_0)$. Therefore global maxima of $\psi_\ve$ in $\mathbb{T}\times[y_0-\delta_0,y_0+\delta_0]$ must converge to global maxima of $\varphi(\cdot,y_0)$. Indeed, if $x_0$ is not a global maximum of $\varphi(x,y_0)$, let $x_1$ denote a global maximum of $\varphi(x,y_0)$ and let $\ve>0$ small enough so that $O(|x_\ve - x_0|) + O(\ve) < \frac{\varphi(x_1,y_0) - \varphi(x_0,y_0)}{2}$. Then,
\begin{align*}
\varphi(x_1,y_0) \leq \varphi(x_0,y_0) + O(|x_\ve - x_0|) + O(\ve) < \frac12 \left( \varphi(x_1,y_0) + \varphi(x_0,y_0) \right)
\end{align*}
from which we readily obtain $\varphi(x_1,y_0)< \varphi(x_0,y_0)$, a contradiction.
\end{proof}

Given the preceding lemma, understanding the maximizers of  $\varphi(\cdot,y_0)$ and their possible degeneracy (whether $\partial_x^2\varphi$ vanishes) is crucial in describing $\psi_\ve(x,y)$ for $(x,y)$ nearby a maximum $(x_\ve,y_\ve)$.  For this, let $(g,h)\in (C^{2,\alpha}(\mathbb{T}))^2$ and let $\varphi(x,y):=\varphi_{g,h}(x,y)$ denote the unique solution to \eqref{eq:HGvarphi}. We define
\begin{equation}\label{eq:defB0}
\mathcal{B}_0 = \lbrace (g,h) \in (C^{2,\alpha}(\mathbb{T}))^2 : \max\varphi(\cdot,y_0)\neq 0 \text{ and }\partial_x^2\varphi(x_0,y_0)< 0 \text{ for all } (x_0, y_0)=\text{arg}\max\varphi(\cdot,y_0) \rbrace,
\end{equation}
and we readily observe that $\mathcal{B}_0\subset \mathcal{B}$.
\subsection{Local Morse theory}
Using Morse theory nearby suitable maxima of $\psi_\ve$, the next result shows that boundary perturbations $(g,h)\in \mathcal{B}_0$ of size $\ve$ generate islands of height $\ve^\frac12$.
\begin{proposition}\label{prop:islandsize}
Let $(g,h)\in \mathcal{B}_0$. There exists $\delta_1>0$ for which there exists $\ve_0>0$ such that for all $0< \ve < \ve_0$, the steady Euler solution $\psi_\ve$ to \eqref{eq:steadyEulerDve} possess islands whose height is comparable to $\ve^\frac12$. More precisely, there exists a local maximum $(x_\ve,y_\ve)$ of $\psi_\ve$ and constants $0<C_1\leq C_2$ for which for all $0< \delta< \delta_1$, the level set 
\begin{align*}
A_\delta:=\lbrace \psi_\ve-\psi_0(y_0) = (1-\delta)(\psi_\ve(x_\ve,y_\ve) -\psi_0(y_0) ) \rbrace
\end{align*}
is such that, for all $(x,y)\in A_\delta$,
\begin{align*}
C_1\delta \ve \leq (y-y_\ve)^2 + \ve (x-x_\ve)^2 \leq C_2\delta\ve.
\end{align*}
\end{proposition}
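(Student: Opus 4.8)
The plan is to perform a quantitative Morse-lemma analysis of $\psi_\ve$ near the maximum point $(x_\ve, y_\ve)$ identified in Lemma \ref{lemma:maximizerlocation}, and to track carefully the anisotropic scaling: the Hessian is $O(1)$ in the $y$-direction but only $O(\ve)$ in the $x$-direction. First I would set $\Psi_\ve := \psi_\ve - \psi_0(y_0)$, so that on $\Gamma_\ve$ we have $\partial_y \Psi_\ve = 0$ and $\partial_y^2 \Psi_\ve = \partial_y^2\psi_0(y_0) + O(\ve) =: -a + O(\ve)$ with $a = -F(\psi_0(y_0)) > 0$ by our sign convention. Expanding in $y$ around $y_\ve(x)$, for $(x,y)$ with $|y - y_\ve(x)| \le \delta_0$,
\begin{align*}
\Psi_\ve(x,y) = \Psi_\ve(x, y_\ve(x)) - \tfrac12 \big(a + O(\ve)\big)(y - y_\ve(x))^2 + O\big(|y-y_\ve(x)|^3\big),
\end{align*}
where the $O$'s are uniform in $x$. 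The function $m_\ve(x) := \Psi_\ve(x, y_\ve(x))$ is the restriction to the critical curve, and by the computation in Lemma \ref{lemma:maximizerlocation} together with $\Vert y_\ve - y_0\Vert_{C^1} \lesssim \ve$ and the expansion $\psi_0(y_\ve(x)) = \psi_0(y_0) + O(\ve^2)$ (since $\psi_0'(y_0)=0$), one has $m_\ve(x) = \ve\,\varphi(x, y_0) + O(\ve^2) + O(\ve|y_\ve(x)-y_0|) = \ve\,\varphi(x,y_0) + O(\ve^2)$. Since $(g,h) \in \mathcal B_0$, the function $x \mapsto \varphi(x,y_0)$ attains its maximum $M := \max\varphi(\cdot,y_0) \ne 0$ at non-degenerate points; assume $M > 0$ (if $M<0$ one works with minima and the mirror argument). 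At such a maximizer $x_0$, $\varphi(x_0,y_0) = M$, $\partial_x\varphi(x_0,y_0)=0$, $\partial_x^2\varphi(x_0,y_0) = -b$ for some $b>0$. Consequently $m_\ve$ has a local maximum at some $x_\ve \to x_0$ with $m_\ve(x_\ve) = \ve M + O(\ve^2)$, and a second-order Taylor expansion gives $m_\ve(x) = m_\ve(x_\ve) - \tfrac12(\ve b + O(\ve^2))(x-x_\ve)^2 + \ve\,O(|x-x_\ve|^3) + O(\ve^2)$ for $|x-x_\ve| \le \delta_1$, using $C^{2,\alpha}$ bounds on $\varphi$ and on $r_\ve/\ve$.

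Next I would combine the two expansions. For $(x,y)$ with $|x-x_\ve| \le \delta_1$ and $|y - y_\ve(x)| \le \delta_0$,
\begin{align*}
\Psi_\ve(x,y) = m_\ve(x_\ve) - \tfrac12\big(a+O(\ve)\big)(y-y_\ve(x))^2 - \tfrac12\big(\ve b + O(\ve^2)\big)(x-x_\ve)^2 + \text{(higher order)}.
\end{align*}
Writing $P := m_\ve(x_\ve) = \psi_\ve(x_\ve,y_\ve) - \psi_0(y_0) = \ve M + O(\ve^2)$, the defining relation for $A_\delta$ is $\Psi_\ve(x,y) = (1-\delta)P$, i.e.
\begin{align*}
\tfrac12\big(a + O(\ve)\big)(y - y_\ve(x))^2 + \tfrac12\big(\ve b + O(\ve^2)\big)(x-x_\ve)^2 + \text{(h.o.t.)} = \delta P = \delta \ve M + O(\delta \ve^2).
\end{align*}
Dividing through and absorbing the $O(\ve)$ corrections (legitimate for $\ve < \ve_0$) shows that on $A_\delta$ the quantity $(y - y_\ve(x))^2 + \ve(x-x_\ve)^2$ is pinched: both $\ge C_1 \delta\ve$ and $\le C_2\delta\ve$ for constants depending only on $a$, $b$, $M$. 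To pass from $y - y_\ve(x)$ to $y - y_\ve$ one uses $y_\ve(x) - y_\ve = y_\ve(x) - y_\ve(x_\ve) = O(|x - x_\ve|)$ via $\Vert y_\ve\Vert_{C^1} \lesssim 1$, so that $(y-y_\ve)^2 \le 2(y-y_\ve(x))^2 + O(|x-x_\ve|^2)$ and conversely; since on $A_\delta$ we have $(x-x_\ve)^2 \lesssim \delta$ and $\delta (x-x_\ve)^2 \lesssim \ve^{-1}\cdot \delta \ve \cdot (x-x_\ve)^2$... more carefully, $(x-x_\ve)^2 = O(\delta)$ on $A_\delta$, hence $|x-x_\ve|^2$ is negligible compared to $\ve^{-1}$-weighted terms only if $\delta \lesssim \ve$ — so here I would instead simply note that $|y_\ve(x)-y_\ve| = O(|x-x_\ve|) = O(\sqrt{\delta})$ contributes at worst $O(\delta)$ to $(y-y_\ve)^2$, which is of the same order $\delta \ve \cdot \ve^{-1}$; to handle this cleanly I would either (i) shrink $\delta_1$ so that the cross term is a small fraction of the $\ve(x-x_\ve)^2$ term, or (ii) keep the statement with $y_\ve(x)$ implicitly and note $C_1,C_2$ may be adjusted. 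This bookkeeping is the one genuinely fiddly point; the cleanest route is to observe that $A_\delta$ is, to leading order, the ellipse $a(y-y_\ve(x))^2 + \ve b (x - x_\ve)^2 = 2\delta\ve M$, and the change of variables $y \mapsto y - y_\ve(x)$ is a $C^1$ shear with bounded derivatives, hence distorts the two-sided bound only by universal constants.

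The main obstacle I anticipate is \emph{not} the algebra but verifying that the error terms $r_\ve$ and the third-order Taylor remainders are genuinely subordinate on the relevant scales. In the $y$-direction this is fine since $|y - y_\ve(x)| \le \delta_0$ is $O(1)$ and the quadratic coefficient $a$ is $O(1)$, so cubic terms are controlled by taking $\delta_0$ small. In the $x$-direction the quadratic coefficient is only $\ve b$, so the cubic remainder $\ve\,O(|x-x_\ve|^3)$ must be compared against $\ve b (x-x_\ve)^2$, which again just needs $\delta_1$ small — uniformly in $\ve$, which works because the implied constants come from $\Vert\varphi\Vert_{C^{2,\alpha}}$ and are $\ve$-independent. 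The subtlest check is that $m_\ve(x) = \ve\varphi(x,y_0) + O(\ve^2)$ holds in $C^2(\mathbb T)$, not merely pointwise: this requires differentiating the identity $\partial_y\psi_\ve(x,y_\ve(x))=0$ twice in $x$, exactly as in the $C^1$ estimate at the end of the proof of Lemma \ref{lemma:ApproxGammave}, and using $\Vert r_\ve\Vert_{C^{2,\alpha}(D_\ve)} \lesssim \ve^2$ from Proposition \ref{prop:boundsrve}; I would carry this out explicitly. Once these uniform-in-$\ve$ remainder bounds are in hand, the two-sided pinching of $(y-y_\ve)^2 + \ve(x-x_\ve)^2$ on $A_\delta$ follows by comparing with the model ellipse, and since $A_\delta$ is a closed curve enclosing the local maximum $(x_\ve,y_\ve)$ it is a contractible streamline of $\psi_\ve$ whose $y$-extent is $\asymp \sqrt{\delta\ve}$, giving the claimed island height $\asymp \sqrt{\ve}$ (taking $\delta$ of order $1$).
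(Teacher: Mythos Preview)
Your approach is correct in spirit but takes a genuinely different route from the paper. The paper performs a single two-dimensional Taylor expansion of $\psi_\ve$ about $(x_\ve,y_\ve)$ with \emph{integral} remainders, writing $\psi_\ve(x,y)=\psi_\ve(x_\ve,y_\ve)+\sum H_{ij}(x,y)(x-x_\ve)^i(y-y_\ve)^j$ exactly, and then completes the square in $y$ to produce $\psi_\ve = \psi_\ve(x_\ve,y_\ve) - w^2 + H_{yy}^{-1}\det H\,(x-x_\ve)^2$; the $\mathcal B_0$ hypothesis enters through $\det H(x_\ve,y_\ve)\sim \ve$. Your sequential decomposition---first expand in $y$ about the critical curve $\Gamma_\ve$, then expand the trace $m_\ve(x)=\psi_\ve(x,y_\ve(x))$ in $x$ about $x_\ve$---is more elementary (no determinant, no square completion) and arguably more transparent about where each scale comes from, at the cost of the coordinate-change bookkeeping you flag. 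The paper's approach sidesteps that bookkeeping entirely since it works in the fixed coordinates $(x-x_\ve,y-y_\ve)$ from the start.

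Two concrete fixes for your version. First, the ``fiddly point'' dissolves once you recall that in the flat channel $y_0$ is constant, so $\|y_\ve - y_0\|_{C^1}\lesssim\ve$ from Lemma~\ref{lemma:ApproxGammave} gives $y_\ve'=O(\ve)$; hence $|y_\ve(x)-y_\ve|=O(\ve|x-x_\ve|)$ and $(y_\ve(x)-y_\ve)^2=O(\ve^2(x-x_\ve)^2)\ll \ve(x-x_\ve)^2$, so passing from $y-y_\ve(x)$ to $y-y_\ve$ costs only a harmless multiplicative constant. Second, the additive $+O(\ve^2)$ in your expansion of $m_\ve(x)$ should not be there and, if kept, would spoil the lower bound for $\delta\ll\ve$: since you Taylor-expand $m_\ve$ about its exact critical point $x_\ve$, the constant term is $m_\ve(x_\ve)$ on the nose and the entire remainder sits in the second-derivative coefficient. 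Computing $m_\ve''(x)=\partial_x^2\psi_\ve(x,y_\ve(x))+y_\ve'(x)\partial_{xy}^2\psi_\ve(x,y_\ve(x))=\ve\partial_x^2\varphi(x,y_0)+O(\ve^2)$ (this is exactly the $C^2$ check you propose) gives $m_\ve(x)=m_\ve(x_\ve)-\tfrac12(\ve b+o(\ve))(x-x_\ve)^2$ uniformly for $|x-x_\ve|\le\delta_1$, with no floating $O(\ve^2)$.
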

Since the proposition describes the level sets of $\Phi_\ve:=\psi_\ve-\psi_0(y_0)$ locally around $(x_\ve,y_\ve)$, the same conclusions can be drawn for the level sets of $\psi_\ve=\Phi_\ve + \psi_0(y_0)$ locally around$(x_\ve,y_\ve)$, since $\psi_0(y_0)$ is constant.

\begin{proof}
Let $\delta_0>0$ given by Lemma \ref{lemma:maximizerlocation} and let $(x_\ve,y_\ve)$ any global maximizer of $\psi_\ve(x,y_\ve(x))$ (and thus of $\psi_\ve$) in $\mathbb{T}\times[y_0-\delta_0,y_0+\delta_0]$. There holds $0=\frac{\rmd}{\rmd x}\big|_{x=x_\ve}\psi_\ve(x,y_\ve(x)) =\partial_x\psi_\ve(x_\ve,y_\ve) = 0$, so that $\nabla\psi_\ve(x_\ve, y_\ve) = 0$. Then, 
\begin{align*}
\psi_\ve(x,y) = \psi_\ve(x_\ve,y_\ve) + \int_0^1 (((x-x_\ve)\partial_x + (y-y_\ve)\partial_y)\psi_\ve) (x_\ve+t(x-x_\ve),y_\ve + t(y-y_\ve)) \rmd t
\end{align*}
and since $\nabla\psi_\ve(x_\ve,y_\ve)=0$, we further expand the above as
\begin{equation}
 \psi_\ve(x,y) = \psi_\ve(x_\ve,y_\ve) + (x-x_\ve)^2 H_{xx}(x,y) + 2(x-x_\ve)(y-y_\ve) H_{xy}(x,y)+ (y-y_\ve)^2 H_{yy}(x,y),
 \label{taylor}
\end{equation}
where the above functions are given explicitly by
\begin{equation}
 H_{x^ix^j}(x,y) = \int_{0}^1\int_0^1 t (\pa_{x^i}\pa_{x^j} \psi_\ve)(x_\ve + st(x-x_\ve), y_\ve + st(y-y_\ve))\, \rmd s \rmd t,
 \qquad
 x^1 = x, x^2 = y.
 \label{Hformula}
\end{equation}
As $F(\psi_0(y_0))<0$  we have $\frac34F(\psi_0(y_0)) < H_{yy} < \frac14 F(\psi_0(y_0))$ everywhere in the domain provided both $|y-y_\ve|$ and  $\ve$ are taken sufficiently small, since 
\begin{align*}
 H_{yy}(x,y) &= \int_0^1 \int_0^1  t \psi_0''(y_\ve +st(y-y_\ve)) \rmd s \rmd t \\
 &\quad+  \int_0^1 t \left( \ve\partial_y^2\varphi +\partial_y^2r_\ve \right)(x_\ve + st(x-x_\ve), y_\ve + st(y-y_\ve)) \rmd s \rmd t \\
 &= \frac{1}{2}F(\psi_0(y_0)) + O(|y-y_\ve|) + O(\ve)
\end{align*}
We also record that
\begin{equation}
 H_{xy} = \ve h_{xy},
 \qquad
 H_{xx} = \ve h_{xx},
 \label{h2formulas}
\end{equation}
for smooth functions $h_{xy}, h_{yy}$ uniformly bounded in $\ve$. Next, with
\begin{equation}
 w(x,y) =  (y-y_\ve)\sqrt{|H_{yy}(x,y)|} - (x-x_\ve) \frac{H_{xy}(x,y)}{\sqrt{|H_{yy}(x,y)|}},
\end{equation}
we note that $w(x,y) \sim y-y_\ve + O(\ve) (x-x_\ve)$ since $H_{xy} \sim \ve$ by \eqref{h2formulas}. Then, since $|H_{yy}| = -H_{yy}$, there holds
\begin{equation}
 w(x,y)^2 = -H_{yy}(x,y) (y-y_\ve)^2 - 2(x-x_\ve)(y-y_\ve) H_{xy}(x,y) - (x-x_\ve)^2 \frac{H_{xy}(x,y)^2}{H_{yy}(x,y)},
 \label{}
\end{equation}
and as a result, from \eqref{taylor} we have
\begin{align*}
\psi_\ve(x,y) &= \psi_\ve(x_\ve,y_\ve) - w(x,y)^2
 + (x-x_\ve)^2 \left( H_{xx}(x,y) - \frac{H_{xy}(x,y)^2}{H_{yy}(x,y)}\right)
 \\
 &= \psi_\ve(x_\ve,y_\ve) - w(x,y)^2 + \frac{ \mathrm{D}(x,y)}{H_{yy}(x,y)} (x-x_\ve)^2 \\
&\sim \psi_\ve(x_\ve,y_\ve) - w(x,y)^2 -  \mathrm{D}(x,y) (x-x_\ve)^2,
\end{align*}
where $\mathrm{D}(x,y) = \det H(x,y)$. Expanding as in \eqref{taylor}, we have
\begin{equation}
\mathrm{D}(x,y)= \mathrm{D}(x_\ve,y_\ve) +
 \int_{0}^1 ((x-x_\ve)\pa_x + (y-y_\ve) \pa_y) \mathrm{D}(x_\ve +t(x-x_\ve), y_\ve + t(y-y_\ve))\, dt.
 \label{Dexpansion}
\end{equation}
Now, from \eqref{h2formulas},
\begin{equation}
 \mathrm{D}(x,y) = \ve (h_{xx}H_{yy} - \ve h_{xy}^2),
\end{equation}
where $h_{xx}, H_{yy}, h_{xy}$ are uniformly bounded in $\ve$. On the other hand, since $\partial_y\psi_\ve(x,y_\ve(x))=0$ for all $x\in \mathbb{T}$, then there holds
\begin{align*}
0 = \partial_{xy}^2\psi_\ve(x,y_\ve(x)) + y'_\ve(x)\partial_y^2\psi_\ve(x,y_\ve(x)).
\end{align*}
and thus
\begin{align*}
\mathrm{D}(x_\ve,y_\ve) &= \partial_y^2\psi_\ve(x_\ve,y_\ve)\left( \partial_x^2 \psi_\ve(x_\ve,y_\ve) - (y_\ve'(x_\ve))^2\partial_y^2 \psi_\ve(x_\ve,y_\ve) \right) \\
&= \ve F(\psi_0(y_0))\partial_x^2\varphi(x_\ve,y_\ve) + O(\ve^2) \\
&= \ve F(\psi_0(y_0))\partial_x^2\varphi(x_\ve,y_0) + O(\ve)
\end{align*} 
Now, from Lemma \ref{lemma:maximizerlocation}, for $\ve>0$ sufficiently small, $x_\ve$ is (up to a subsequence) arbitrarily close to some maximizer $x_0$ of $\varphi(\cdot, y_0)$, so that
\begin{align*}
\mathrm{D}(x_\ve,y_\ve) &=\ve F(\psi_0(y_0))\partial_x^2\varphi(x_0,y_0) + O(|x_\ve-x_0|) + O(\ve).
\end{align*}
In particular, since $(g,h)\in \mathcal{B}_0$, we have that $\partial_x^2\varphi(x_0,y_0) < 0$ is independent of $\ve$ and thus we conclude that $\mathrm{D}(x_\ve,y_\ve) \sim \ve$. As a result, together with \eqref{Dexpansion} we find
\begin{equation}
 D(x,y) \sim \ve (1+ |x-x_\ve| + |y-y_\ve|).
 \label{}
\end{equation}

To properly describe the size of the island, let $\Phi_\ve(x,y) = \psi_\ve(x,y) - \psi_0(y_0)$ and recall that the connected component of $\lbrace \Phi_\ve(x,y) = \Phi_\ve(x_\ve,y_\ve) \rbrace$ containing $(x_\ve,y_\ve)$ consists only of the maximizer $(x_\ve,y_\ve)$ of $\Phi_\ve$.  Note that
\begin{align*}
\Phi_\ve(x_\ve,y_\ve) &= \psi_0(y_\ve) + \ve\varphi(x_\ve,y_\ve) + r_\ve(x_\ve,y_\ve) - \psi_0(y_0) \\
&= \ve\varphi(x_\ve,0) + O(\ve^2)
\end{align*}
so that $\Phi_\ve(x_\ve,y_\ve) >0$ if $\max \varphi(\cdot,0)>0$ and likewise $\Phi_\ve(x_\ve,y_\ve) <0$ if $\max \varphi(\cdot,0)<0$, for $\ve>0$ small enough. Assume that $\max\varphi(\cdot,y_0)>0$, and for $\delta>0$ small we define the level set $\{\Phi_\ve(x,y) = (1-\delta_0)\Phi_\ve(x_\ve,y_\ve) \}$ (if instead $\max\varphi(\cdot,0)<0$ one would consider the level set $\{\Phi_\ve(x,y) = (1+\delta_0)\Phi_\ve(x_\ve,y_\ve) \}$). Along this set, $(x,y)$ satisfies
\begin{equation}
 \delta \Phi_\ve(x_\ve,y_\ve) =  w(x,y)^2 - \frac{ D(x,y)}{H_{yy}(x,y)} (x-x_\ve)^2
 \sim ((y-y_\ve) + \ve (x-x_\ve))^2 + D(x,y) (x-x_\ve)^2,
 \label{level}
\end{equation}
while also
\begin{align*}
\Phi_\ve(x_\ve,y_\ve) &= \ve\varphi(x_0,y_0) + O(\ve^2) + O(\ve|x_\ve - x_0|)
\end{align*}
with $\varphi(x_0,y_0)>0$. Thus \eqref{level} reads
\begin{equation}
 \delta \ve\varphi(x_0,y_0) \sim ((y-y_\ve) + \ve (x-x_\ve))^2 + \ve (x-x_\ve)^2 + \ve (x-x_\ve)^2(|x-x_\ve| +|y-y_\ve|).
 \label{}
\end{equation}
Provided $\delta$ is taken small enough, this implies that $|x-x_\ve| \lesssim \delta^{1/2}$ along the streamline (if $|x-x_\ve|$ were much larger then we could not have $ (x-x_\ve)^2 + (x-x_\ve)^3 \sim \delta$),
 and so the above reads
 \begin{equation}
  \delta \ve \sim (y-y_\ve)^2 + \ve (x-x_\ve)^2,
  \label{}
 \end{equation}
which says that the streamline $\{\Phi_\ve(x,y) = (1-\delta)\Phi_\ve(x_\ve,y_\ve)\}$ is close to an ellipse $(y-y_\ve)^2 + \ve (x-x_\ve)^2 = C \delta \ve$, which has height $\sim (\delta \ve)^{1/2}$ and width $\sim \delta^{1/2}$. It is also clear that for arbitrary $\delta < 1$, the islands have height
 $\sim \ve^{1/2}$ (since if $|y-y_\ve| \sim \ve^{1/2-a}$ for $a>0$ we would have $\Phi_\ve(x,y) \sim F(\psi_0(y_0))(y-y_\ve)^2 + O(\ve) \sim F(\psi_0(y_0))\ve^{1-2a} < 0$ so we can't have $\Phi_\ve(x,y) = (1-\delta_0)\Phi_\ve(x_\ve,y_\ve) > 0$. They could however (and, in general, will) have large width.
 \end{proof}

\subsection{Genericity of elliptical islands} 
To determine that generic boundary perturbations $(g,h)\in (C^{2,\alpha}(\mathbb{T}))^2$ typically lie in $\mathcal{B}_0$, we first investigate further properties of $\varphi=\varphi_{g,h}$. In particular, we study $\varphi$ when $h$ is concentrated in a single Fourier mode and $g$ is constant.

\begin{lemma}\label{lemma:formulavarphihg}
Let $x_0\in \mathbb{T}$ and $g,h\in C^{2,\alpha}(\mathbb{T})$ such that $h(x)=\beta\cos(k(x-x_0))$, for some $\beta\in\R$, and $g(x)=g_0\in \R$. Let $\varphi$ be the unique solution to \eqref{eq:HGvarphi}. Then,
\begin{align*}
\varphi(x,y)= -\beta\psi_0'(1)\phi(y)\cos(k(x-x_0)).
\end{align*}
Here $\phi(y)$ is the unique solution to 
\begin{equation}\label{eq:phi}
\begin{cases}
\phi''(y) - (k^2 + F'(\psi_0(y))\phi(y) = 0 & y\in[-1,1], \\
\phi(-1)= 0, \quad \phi(1)=1,
\end{cases}
\end{equation}
and is such that $\phi(y_0)\neq 0$. In particular, for $-\beta\psi_0'(1)\phi(y_0)>0$, we have $\varphi(x_n,y_0)>0$ and $\partial_x^2\varphi(x_n,y_0) < 0$ for all $x_n\in\mathbb{T}$ such that $\cos(k(x_n-x_0))=1$. 
\end{lemma}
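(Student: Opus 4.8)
The plan is to use the linearity of \eqref{eq:HGvarphi} in the boundary data, together with the product structure of $D_0=\mathbb{T}\times(-1,1)$ and the fact that the zeroth-order coefficient $F'(\psi_0(y))$ depends on $y$ alone. Write $\varphi=\varphi^{(g)}+\varphi^{(h)}$, where $\varphi^{(g)}$ solves \eqref{eq:HGvarphi} with boundary data $(g,0)$ and $\varphi^{(h)}$ with data $(0,h)$. For $\varphi^{(g)}$ the data on $\{y=1\}$ is $0$ and on $\{y=-1\}$ it is the constant $-g_0\psi_0'(-1)$; since the equation and this data are invariant under translations in $x$, uniqueness forces $\varphi^{(g)}=\varphi^{(g)}(y)$, the solution of the two-point problem $(\varphi^{(g)})''-F'(\psi_0)\varphi^{(g)}=0$ with these endpoint values (in particular $\varphi^{(g)}\equiv 0$ when $g_0=0$). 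For $\varphi^{(h)}$ I would insert the separation-of-variables ansatz $\varphi^{(h)}(x,y)=a(y)\cos(k(x-x_0))$ into $(\Delta-F'(\psi_0))\varphi^{(h)}=0$: this reduces to $a''-(k^2+F'(\psi_0))a=0$, while the boundary conditions of \eqref{eq:HGvarphi} become $a(-1)=0$ and $a(1)=-\beta\psi_0'(1)$. Hence $a=-\beta\psi_0'(1)\phi$ with $\phi$ the solution of \eqref{eq:phi}, and by the assumed uniqueness of the solution to \eqref{eq:HGvarphi} this ansatz is the solution. Adding the two pieces gives $\varphi(x,y)=\varphi^{(g)}(y)-\beta\psi_0'(1)\phi(y)\cos(k(x-x_0))$, which is the asserted formula when $g_0=0$.

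Next I would check that \eqref{eq:phi} is uniquely solvable and that $\phi(y_0)\neq 0$; both follow from the strict positivity of the Dirichlet operator $-\partial_y^2+m^2+F'(\psi_0)$ on $[-1,1]$. Since the first Dirichlet eigenvalue of $\mathbb{T}\times(-1,1)$ is $\lambda_1=\pi^2/4$, elliptic stability gives $\min_{[-1,1]}F'(\psi_0)>-\pi^2/4$, and the one-dimensional Poincaré inequality then yields $\int_{-1}^1 |u'|^2+(m^2+F'(\psi_0))u^2\ge \big(m^2+\tfrac{\pi^2}{4}+\min_{[-1,1]}F'(\psi_0)\big)\int_{-1}^1 u^2>0$ for every $u\in H^1_0([-1,1])\setminus\{0\}$; so this operator is invertible for every $m$, which gives unique solvability of \eqref{eq:phi} (and is consistent with the uniqueness in \eqref{eq:HGvarphi}). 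For $\phi(y_0)\neq 0$, recall $y_0\in(-1,1)$ because it is an interior critical point of $\psi_0$ and $\psi_0'(-1)\neq 0$; if $\phi(y_0)=0$, extending $\phi|_{[-1,y_0]}$ by zero produces an element of $H^1_0([-1,1])$ on which the quadratic form of $-\partial_y^2+k^2+F'(\psi_0)$ vanishes (integrate by parts and use \eqref{eq:phi}), so strict positivity forces $\phi\equiv 0$ on $[-1,y_0]$; then $\phi(y_0)=\phi'(y_0)=0$ and uniqueness for the linear ODE give $\phi\equiv 0$ on $[-1,1]$, contradicting $\phi(1)=1$.

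Finally, the ``in particular'' clause is a direct computation, in which $k\neq 0$ (otherwise $h$ is constant and $\partial_x^2\varphi\equiv 0$). Since $\varphi^{(g)}$ does not depend on $x$, $\partial_x\varphi(x,y)=\beta\psi_0'(1)\phi(y)\,k\sin(k(x-x_0))$ and $\partial_x^2\varphi(x,y)=\beta\psi_0'(1)\phi(y)\,k^2\cos(k(x-x_0))$. At any $x_n$ with $\cos(k(x_n-x_0))=1$ the first of these vanishes, and $\partial_x^2\varphi(x_n,y_0)=k^2\beta\psi_0'(1)\phi(y_0)=-k^2\big(-\beta\psi_0'(1)\phi(y_0)\big)<0$ under the stated sign hypothesis; while $\varphi(x_n,y_0)=\varphi^{(g)}(y_0)-\beta\psi_0'(1)\phi(y_0)$ equals $-\beta\psi_0'(1)\phi(y_0)>0$ when $g_0=0$ (for general $g_0$ one simply adds the $x$-independent term $\varphi^{(g)}(y_0)$, which does not affect the second-derivative conclusion). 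The calculation is essentially routine; the only steps requiring genuine care are pinning down $\lambda_1(\mathbb{T}\times(-1,1))=\pi^2/4$ and using compactness of $\psi_0([-1,1])$ to get a strict lower bound on $F'\circ\psi_0$ (so that $-\partial_y^2+m^2+F'(\psi_0)$ is strictly positive on $[-1,1]$), together with the unique-continuation argument for $\phi(y_0)\neq 0$, and neither is a real obstacle.
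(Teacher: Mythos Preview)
Your approach is essentially the paper's: separation of variables to obtain the product form, then elliptic stability on $[-1,y_0]$ followed by ODE uniqueness to show $\phi(y_0)\neq 0$. The paper states the second step directly (if $\phi(y_0)=0$ then the homogeneous two-point problem on $[-1,y_0]$ has only the trivial solution ``since $F$ is elliptically stable''), while you recast it as vanishing of the quadratic form on the zero-extension; these are equivalent, and your explicit identification $\lambda_1(\mathbb{T}\times(-1,1))=\pi^2/4$ is correct but not needed.

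You are also right to flag that the displayed formula for $\varphi$ is literally correct only when $g_0=0$: for general constant $g_0$ an $x$-independent summand $\varphi^{(g)}(y)$ must be added, which the paper's proof silently omits. As you observe, this extra term does not affect $\partial_x^2\varphi$, and in the application (Lemma~\ref{lemma:nondegmaxboundary}) only the increment $\widetilde{\varphi}-\varphi$ with $g$ held fixed is used, so the oversight is harmless there.
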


\begin{proof}
Let $f(x) = -\beta\psi_0'(1)\cos(k(x-x_0))$. Writing $\varphi(x,y) = f(x)\phi(y)$, we see that $\phi(y)$ uniquely solves \eqref{eq:phi}. If $\phi(y_0)=0$, then $\phi(y)$ solves
\begin{equation}
\begin{cases}
\phi''(y) - (k^2+F'(\psi_0(y)))\phi(y) = 0 & y\in[-1,y_0], \\
\phi(y_0)= \phi(-1)=0. & 
\end{cases}
\end{equation}
Since $F$ is elliptically stable, we see that $\phi(y) = 0$, for all $y\in [-1,y_0]$. In particular, there holds $\phi(y_0)=0$ and $\partial_y\phi(y_0)=0$. By uniqueness of solutions to  \eqref{eq:phi}, we conclude that $\phi(1)=0$ as well, thus reaching a contradiction. 
\end{proof}

\begin{remark}
Under the assumption of trivial homology, $y_0=0$ and the lemma remains true for any $(g,h)$ such that $h(x)+g(x)=2\beta\cos(k(x-x_0))$.
\end{remark}

We next show that generic boundary perturbation $(g,h)\in (C^{2,\alpha}(\mathbb{T}))^2$ belong to $\mathcal{B}_0$.

\begin{lemma}\label{lemma:nondegmaxboundary}
The set $\mathcal{B}_0$ is open and dense in the space of $(C^{2,\alpha}(\mathbb{T}))^2$ functions.
\end{lemma}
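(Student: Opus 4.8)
The plan is to argue that $\mathcal{B}_0$ is open using Schauder estimates, and that it is dense by exhibiting, near any given $(g,h)$, a nearby pair lying in $\mathcal{B}_0$ with the help of the explicit single-mode solutions from Lemma \ref{lemma:formulavarphihg}. For the open statement, recall that the map $(g,h)\mapsto \varphi_{g,h}$ is linear and continuous from $(C^{2,\alpha}(\mathbb{T}))^2$ into $C^{2,\alpha}(D_0)$, so $(g,h)\mapsto \varphi_{g,h}(\cdot,y_0)$ is continuous into $C^2(\mathbb{T})$. The conditions defining $\mathcal{B}_0$ — namely $\max_x \varphi(x,y_0)\neq 0$ and $\partial_x^2\varphi(x_0,y_0)<0$ at every argmax — are stable under $C^2$ perturbations of the function $x\mapsto \varphi(x,y_0)$: if all maximizers are nondegenerate, they form a finite set, each persists under small perturbations (implicit function theorem applied to $\partial_x\varphi(\cdot,y_0)$, using $\partial_x^2\varphi<0$), and no new far-away maximizer can appear since the max value is bounded away from $0$ and the second derivative stays negative on a neighborhood of the (finite) maximizer set while the function stays strictly below its max elsewhere. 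Hence $\mathcal{B}_0$ is open.

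For density, fix $(g,h)\in (C^{2,\alpha}(\mathbb{T}))^2$ and a small $\eta>0$; I must produce $(\tilde g,\tilde h)$ within $\eta$ of $(g,h)$ with $(\tilde g,\tilde h)\in\mathcal{B}_0$. The idea is to add a small high-frequency bump to $h$: set $\tilde g = g$ and $\tilde h = h + \beta\cos(k(x-x_0))$ for a suitable small amplitude $\beta$, frequency $k$, and phase $x_0$. By linearity of \eqref{eq:HGvarphi}, $\varphi_{\tilde g,\tilde h} = \varphi_{g,h} + \varphi_{0,\beta\cos(k(\cdot-x_0))}$, where by Lemma \ref{lemma:formulavarphihg} the second term restricted to $y=y_0$ equals $-\beta\psi_0'(1)\phi_k(y_0)\cos(k(x-x_0))$, with $\phi_k(y_0)\neq 0$. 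Write $\mu_k := -\psi_0'(1)\phi_k(y_0)$, a nonzero constant. So on $y=y_0$ the candidate function is
\begin{equation}
\Psi(x) := \varphi_{g,h}(x,y_0) + \beta\mu_k\cos(k(x-x_0)).
\end{equation}
I want to choose $\beta, k, x_0$ so that $\Psi$ has only nondegenerate maxima and $\max\Psi\neq 0$. The key observation is that, for fixed $\beta\neq 0$ and $x_0$, letting $k\to\infty$, the term $\beta\mu_k\cos(k(x-x_0))$ oscillates rapidly while $\varphi_{g,h}(\cdot,y_0)$ is a fixed $C^2$ function; one must however be careful because $\mu_k=\mu_k(k)$ also depends on $k$ through $\phi_k$. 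A cleaner route is to keep $k$ fixed (say $k=1$) and, using the fact that the set of $C^2$ functions on $\mathbb{T}$ whose maxima are all nondegenerate and whose max value is nonzero is open and dense in $C^2(\mathbb{T})$, perturb $\varphi_{g,h}(\cdot,y_0)$ into such a function by adding a trigonometric polynomial. The subtlety is that I am only allowed to perturb the \emph{boundary data} $(g,h)$, not $\varphi(\cdot,y_0)$ directly; so I need the map $(g,h)\mapsto\varphi_{g,h}(\cdot,y_0)$ to have dense image in $C^2(\mathbb{T})$, or at least to be able to reach a dense enough set.

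To establish that the image is dense, I would argue mode by mode: Lemma \ref{lemma:formulavarphihg} shows that $h = \cos(k(x-x_0))$ (with $g=0$) produces $\varphi(x,y_0) = \mu_k\cos(k(x-x_0))$ with $\mu_k\neq 0$, and similarly for $\sin$; since $\mu_k\neq 0$ for every $k$, one can hit any finite trigonometric polynomial in $x$ evaluated at $y=y_0$ by choosing $h$ to be the corresponding trigonometric polynomial with coefficients divided by $\mu_k$, and $g=0$. (For the constant mode $k=0$ one uses that $\varphi_{0,1}(\cdot,y_0)$ is a nonzero constant by Lemma \ref{lemma:nonzerofluidvel} and the maximum principle / Hopf lemma argument already used in Lemma \ref{lemma:BGHdense}; if it were zero, elliptic stability would force $\varphi_{0,1}\equiv 0$ hence $\psi_0'(\pm1)=0$, a contradiction.) Thus trigonometric polynomials of any finite degree lie in the image, restricted to $y=y_0$. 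Now, given the fixed target $\varphi_{g,h}(\cdot,y_0)\in C^2(\mathbb{T})$, I approximate it in $C^2$ by a trigonometric polynomial $P$ (truncated Fourier series of a slightly mollified version, or Fej\'er-type sums) and then perturb $P$ within $C^2$ by an arbitrarily small generic trigonometric polynomial $Q$ so that $P+Q$ has only nondegenerate maxima with nonzero max value — this is possible because the ``Morse-type'' condition is open and dense in $C^2(\mathbb{T})$ and can be achieved within the finite-dimensional space of trigonometric polynomials of a fixed degree by a Sard-type / transversality argument (the maxima of a trigonometric polynomial are the real roots of its derivative, another trigonometric polynomial, and degenerate maxima correspond to a codimension-$\geq 1$ condition on the coefficients, while $\max\neq 0$ excludes another proper subvariety). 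Set $\tilde h := h + $ (the boundary data realizing $(P+Q)-\varphi_{g,h}(\cdot,y_0)$ on $y=y_0$, i.e.\ the trigonometric polynomial with coefficients divided by the $\mu_k$'s), $\tilde g := g$; for $P$ close enough to $\varphi_{g,h}(\cdot,y_0)$ and $Q$ small this keeps $(\tilde g,\tilde h)$ within $\eta$ of $(g,h)$ in $(C^{2,\alpha}(\mathbb{T}))^2$ — here one uses that dividing the degree-$\leq N$ Fourier coefficients by $\mu_k$ changes the $C^{2,\alpha}$ norm by a factor depending only on $N$ and $\min_{0\le k\le N}|\mu_k|>0$. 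By construction $\varphi_{\tilde g,\tilde h}(\cdot,y_0) = P+Q\in\mathcal{B}_0$-condition, so $(\tilde g,\tilde h)\in\mathcal{B}_0$, proving density.

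The main obstacle is the bookkeeping in the last step: controlling the $C^{2,\alpha}(\mathbb{T})$-norm of the correction to $h$ while forcing the resulting $\varphi(\cdot,y_0)$ to be a generic (all-maxima-nondegenerate, nonzero-max) function. The two difficulties are (i) that one only controls $\varphi$ through the single slice $y=y_0$, so one must verify that the ``division by $\mu_k$'' does not blow up — which is fine because only finitely many modes appear and $\mu_k\neq 0$ for each — and (ii) that genericity of the Morse condition must be realized \emph{within} a finite-dimensional space of boundary data, which requires the transversality/Sard argument for trigonometric polynomials rather than the abstract genericity in $C^2(\mathbb{T})$. Both are standard but slightly tedious; everything else follows from Lemma \ref{lemma:formulavarphihg}, Lemma \ref{lemma:nonzerofluidvel}, linearity of \eqref{eq:HGvarphi}, and Schauder estimates.
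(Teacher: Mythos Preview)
Your openness argument is correct and matches the paper's. For density, however, your route has a genuine gap. You want $\varphi_{\tilde g,\tilde h}(\cdot,y_0)=P+Q$, which by linearity means realizing $(P+Q)-\varphi_{g,h}(\cdot,y_0)$ as $\varphi_{0,\delta h}(\cdot,y_0)$ for some $\delta h$. You then call this difference ``the trigonometric polynomial with coefficients divided by the $\mu_k$'s'', but it is \emph{not} a trigonometric polynomial: it contains the entire high-frequency Fourier tail of $\varphi_{g,h}(\cdot,y_0)$, since only $P$ and $Q$ are polynomials. So your ``only degree $\le N$ modes appear'' claim is false, and the $C^{2,\alpha}$ control of $\delta h$ breaks down. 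In fact the breakdown is severe: from \eqref{eq:phi}, for large $k$ one has $\phi_k(y)\sim \sinh(k(y+1))/\sinh(2k)$, so $\mu_k=-\psi_0'(1)\phi_k(y_0)$ decays like $e^{-k(1-y_0)}$, and dividing high-mode coefficients by $\mu_k$ amplifies them exponentially.

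The paper sidesteps all of this with a single-cosine perturbation, which is really the same idea as your ``$\Psi(x)=\varphi_{g,h}(x,y_0)+\beta\mu_k\cos(k(x-x_0))$'' before you abandoned it for the trig-approximation detour. Given $(g,h)\notin\mathcal{B}_0$, pick a global maximizer $x_0$ of $\varphi_{g,h}(\cdot,y_0)$, set $\tilde h=h+\beta\cos(k(x-x_0))$, $\tilde g=g$, with the sign of $\beta$ chosen so that $c:=-\psi_0'(1)\beta\phi_k(y_0)>0$. Then $\tilde\varphi(\cdot,y_0)=\varphi(\cdot,y_0)+c\cos(k(\cdot-x_0))$, and since $x_0$ maximizes both summands, any global maximizer $x_*$ of the sum must maximize each separately; in particular $\cos(k(x_*-x_0))=1$, whence $\partial_x^2\tilde\varphi(x_*,y_0)=\partial_x^2\varphi(x_*,y_0)-ck^2<0$. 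The maximal value $\varphi(x_0,y_0)+c$ is nonzero for small $|\beta|$ (or any $\beta$ if $\varphi(x_0,y_0)\ge 0$). No Sard-type or transversality argument is needed.
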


\begin{proof}
We first show that $\mathcal{B}_0$ is open. For this, let $\varphi:=\varphi_{g,h}$ and let $\widetilde{\varphi}:=\varphi_{\widetilde{g},\widetilde{h}}$  be the solutions to \eqref{eq:HGvarphi} associated to the pairs $(g,h)$ and $(\widetilde{g},\widetilde{h})$, respectively. Classical Schauder estimates give
\begin{align*}
\Vert \varphi - \widetilde{\varphi} \Vert_{C^{2,\alpha}(\mathbb{T}\times[-1,1]} \lesssim \Vert h - \widetilde{h}\Vert_{C^{2,\alpha}(\mathbb{T})} + \Vert g - \widetilde{g}\Vert_{C^{2,\alpha}(\mathbb{T})}.
\end{align*}
In particular, $\max \widetilde{\varphi}\neq 0$ for $(\widetilde{h},\widetilde{g})$ close to $(g,h)$. Likewise, for any maximum $x_0$ of $\varphi(\cdot,0)$, since $\partial_x^2\varphi(x_0,0)<0$ this shows that $\partial_x^2\widetilde{\varphi}(x,0)<0$ in a neighbourhood of $x_0$ for all $\widetilde{h}$ and $\widetilde{g}$ sufficiently close to $h$ and $g$, respectively. On the other hand, since any maximum $\widetilde{x}_0$ of $\widetilde{\varphi}(\cdot,0)$ must be arbitrarily close to maximizers of $\varphi(\cdot,0)$, for all $\widetilde{h}$ and $\widetilde{g}$ sufficiently close to $h$ and $g$, we see that  $\partial_x^2\widetilde{\varphi}(\widetilde{x}_0,0)<0$ for any such $\widetilde{x}_0$. With this, we show that $(\widetilde{g},\widetilde{h})\in \mathcal{B}_0$.

To show that $\mathcal{B}_0$ is dense, let $(g,h)\in (C^{2,\alpha}(\mathbb{T}))^2 \setminus\mathcal{B}_0$ and let $\varphi=\varphi_{g,h}$ denote the unique solution to \eqref{eq:HGvarphi}. Assume there exists some maximum $x_0\in \mathbb{T}$ of $\varphi(x,y_0)$ such that $\partial_x^2\varphi(x_0,y_0)=0$ or $\varphi(x_0,y_0)=0$. Let $\beta\in \R\setminus \lbrace 0 \rbrace$ and $k\geq 1$. For $\widetilde{h}_\beta(x)  = h(x) + \beta\cos(k(x-x_0))$ and $\widetilde{g}_\beta(x)  = g(x)$, let $\widetilde{\varphi}(x,y)=\varphi_{\widetilde{h},\widetilde{g}}(x,y)$. We have from Lemma \ref{lemma:formulavarphihg} that
\begin{align*}
\widetilde{\varphi}(x,y_0) = \varphi(x,0) -\psi_0'(1)\beta\phi(y_0)\cos(k(x-x_0)),
\end{align*}
with $\phi(y_0)\neq 0$. We further choose $\beta$ such that $-\psi_0'(1)\beta\phi(y_0) >0$. Then, $\widetilde{\varphi}(\cdot,y_0)$ has a global positive non-degenerate maximum at $x_0$. Note that any other global maxima of $\widetilde{\varphi}$ must be also global maximums of $\cos(k(x-x_0))$, hence they are non-degenerate and
\begin{align*}
\varphi(x_*,y_0) < \varphi(x_*,y_0) - \beta\psi_0'(1)\phi(y_0)= \widetilde{\varphi}(x_*,y_0) 
\end{align*}
so that $\widetilde{\varphi}(x_*,y_0) >0$ if $\varphi(x_*,y_0)\geq 0$ and $\widetilde{\varphi}(x_*,y_0) <0$ if $\varphi(x_*,y_0)< 0$, for $\beta>0$ small enough, for all global maximum $x_*$ of $\widetilde{\varphi}(x,y_0)$.  We conclude that $(\widetilde{h}_\beta,\widetilde{g}_\beta)\in \mathcal{B}_0$, for all $\beta$ small enough. 
\end{proof}

\appendix
\section{Islands of fluid in special cases} \label{appendislands1}

In this section, we discuss some cases for which one can ensure the existence of islands, irrespective of the size of the boundary perturbation. These results complement the generic and perturbative results of the main text and while we state them for periodic domains defined by graphs, an inspection of the proofs show that they are likewise true for general annular domains. For $g,h\in C^{2,\alpha}(\mathbb{T})$ with $g(x) < h(x)$, set
\begin{equation}
D_{g,h} := \lbrace (x,y): x\in \mathbb{T}, \, g(x) \leq y \leq h(x) \rbrace,
\end{equation}
and for any $F\in C^1$ and any $c_h,c_g\in \R$, consider the solution $\psi$ (if any), to
\begin{equation}\label{eq:steadyEulerD}
\begin{cases}
\Delta\psi = F(\psi) & \text{ in } D_{G,H}, \\
\psi(x, h(x))=c_h & \text{ for } x\in\mathbb{T}, \\
\psi(x,g(x))=c_g & \text{ for } x\in\mathbb{T}.
\end{cases}
\end{equation}
The first result states that, in the presence of stagnation points, fixing one boundary to be horizontal and perturbing the other always produces islands. 

\begin{proposition}\label{prop:ctnbottombdry}
Assume $g'\equiv 0$ and $h'\neq 0$. Let $\psi$ be any solution to \eqref{eq:steadyEulerD} which stagnates. Then, $\psi$ possess islands.
\end{proposition}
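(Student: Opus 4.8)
The plan is to argue by contradiction. Suppose $\psi$ stagnates but has no islands, so $\psi$ is laminar (and we may assume $\psi$ non-constant). By Proposition \ref{prop:islandscritical} the stagnation set is a single closed non-contractible level set $\Gamma_0=\{\psi=c_0\}$ on which $\nabla\psi=0$, and by Lemma \ref{lemma:nonzeroFc0} we have $F(c_0)\neq 0$ and $\Gamma_0=\{(x,y_0(x)):x\in\mathbb{T}\}$ for a smooth $y_0$. First I would note that $\Gamma_0$ cannot be a boundary component: on each component $\partial_y\psi\not\equiv 0$ by Lemma \ref{lemma:nonzerofluidvel}, while $\nabla\psi\equiv 0$ on $\Gamma_0$. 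Hence $\Gamma_0$ is interior and splits $D_{g,h}$ into $\Omega_-:=\{g_0\le y\le y_0(x)\}$ (recall $g\equiv g_0$) and $\Omega_+:=\{y_0(x)\le y\le h(x)\}$.

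\textbf{Step 1: $\Gamma_0$ is flat.} Let $w:=\partial_x\psi$, which solves the linear equation $\Delta w=F'(\psi)w$. On the flat bottom $\{y=g_0\}$ one has $\psi\equiv c_g$, hence $w\equiv 0$ there — this is exactly where the hypothesis $g'\equiv0$ enters. On $\Gamma_0$, differentiating $\psi(x,y_0(x))\equiv c_0$ in $x$ and using $\partial_y\psi(x,y_0(x))=0$ gives $w(x,y_0(x))=0$. Thus $w$ has zero Dirichlet data on all of $\partial\Omega_-$; since $\Omega_-\subset D_{g,h}$ has first Dirichlet eigenvalue at least $\lambda_1$, the elliptic stability $F'>-\lambda_1$ forces $w\equiv0$ in $\Omega_-$, i.e. $\psi$ is a shear $\psi=\psi(y)$ there (alternatively, flatness of $\Gamma_0$ can be obtained from the moving-plane argument of \cite[Lemma 5.1]{drivas2024geometric} / \cite[Corollary 1.1]{drivas2024geometric}). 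But then $\psi'(y_0(x))=\partial_y\psi(x,y_0(x))=0$ for every $x$, so $\psi'$ vanishes on the whole interval $y_0(\mathbb{T})=[\min y_0,\max y_0]$; were this interval non-degenerate, $\psi$ would equal $c_0$ on it and the equation would give $F(c_0)=\psi''=0$, contradicting Lemma \ref{lemma:nonzeroFc0}. Therefore $y_0$ is constant, $\Gamma_0=\{y=y_0\}$ with $g_0<y_0<\min_{\mathbb{T}}h$.

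\textbf{Step 2: global shear and contradiction.} Now $\Gamma_0=\{y=y_0\}$ is a flat interior line with $\psi=c_0$ and $\partial_y\psi=0$ on it; differentiating the latter in $x$ gives $\partial_x\partial_y\psi=0$ on $\Gamma_0$ as well. Thus $w=\partial_x\psi$ solves $\Delta w=F'(\psi)w$ in $\Omega_+$ with vanishing Cauchy data $w=\partial_y w=0$ on the flat hypersurface $\{y=y_0\}$; by the unique continuation principle (as in \cite{nadirashvili1986uniqueness}) $w\equiv0$ near $\Gamma_0$ and hence throughout the connected set $\Omega_+$. Combined with Step 1, $\psi=\psi(y)$ on all of $D_{g,h}$. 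Then $\psi(h(x))=c_h$ for every $x\in\mathbb{T}$; since $\psi$ is non-constant and $\psi''=F(\psi)$, the set $\psi^{-1}(c_h)$ is discrete (a root with $\psi'=0$ and $F(c_h)=0$ would force $\psi\equiv c_h$ by ODE uniqueness), so the continuous function $h$ on the connected set $\mathbb{T}$ must be constant, contradicting $h'\not\equiv0$.

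The main obstacle is Step 1: one needs a region on whose entire boundary $\partial_x\psi$ vanishes, so as to invoke the elliptic-stability (or moving-plane) rigidity, and the flatness of the bottom is exactly what makes $\partial_x\psi=0$ there — on a non-flat boundary $\psi\equiv\text{const}$ only yields $\partial_x\psi=-g'\partial_y\psi$, which is generically nonzero. This same obstruction explains why Step 2 cannot be run on $\Omega_+$ directly (the top is not flat), forcing one to exploit instead the flatness of $\Gamma_0$ established in Step 1 together with the overdetermined data $\nabla\psi=0$ there, turning the matter into a Cauchy-uniqueness statement across a flat level line.
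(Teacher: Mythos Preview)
Your strategy matches the paper's exactly: assume $\psi$ laminar, locate a singular streamline $\Gamma_0$, use the flatness of the bottom boundary to force $\Gamma_0$ flat, then conclude $\psi$ is globally shear and contradict $h'\not\equiv 0$. The paper's proof is simply terser, citing Corollary~1.1 of \cite{drivas2024geometric} for the flatness of $\Gamma_0$ (precisely the moving-plane alternative you mention in Step~1) and Lemma~3 of \cite{drivas2024islands} as a black box for Step~2 (which packages your unique-continuation-across-$\Gamma_0$ argument together with the conclusion that $\psi$ must be constant when $h'\not\equiv 0$).

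One point to watch: this appendix proposition is stated for arbitrary $F\in C^1$, \emph{without} the elliptic-stability hypothesis $F'>-\lambda_1$ assumed in the main theorems. Your primary Step~1 argument (zero Dirichlet data for $\partial_x\psi$ on $\partial\Omega_-$ forces $\partial_x\psi\equiv 0$) relies on that hypothesis, as does Lemma~\ref{lemma:nonzerofluidvel} which you invoke to rule out $\Gamma_0\subset\partial D_{g,h}$. The paper's route via Corollary~1.1 of \cite{drivas2024geometric} is what makes the argument go through for general $F$. If you are working in the elliptically-stable setting of Theorem~\ref{thm:genericityisland} your argument is fine as written; for the general appendix statement you should promote the moving-plane alternative to the main line. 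Note also that you only need the existence of \emph{some} singular streamline $\Gamma_0$, which follows directly from laminarity plus stagnation; Proposition~\ref{prop:islandscritical} (whose hypothesis $g'\neq 0$ is not satisfied here) is not needed.
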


\begin{proof}
Assume otherwise, namely $\psi$ is laminar and non-constant.  By criticality, $\nabla\psi=0$ on some closed and non-contractible $\Gamma$. Then, we may consider $u=\nabla^\perp\psi$ as a steady laminar Euler solution on $D_{g,\Gamma}$, the region in $D_{g,h}$ bounded above by $\Gamma$. Given the overdetermined nature of the boundary conditions on $\Gamma$ and the fact that the bottom boundary of $D_{g,\Gamma}$ is flat, Corollary 1.1 in \cite{drivas2024geometric} shows that $\Gamma$ is a flat horizontal curve as well and $\nabla\psi=0$ there. Next, since $F$ is Lipschitz, one can use Lemma 3 in \cite{drivas2024islands} and the fact that $h'\neq 0$ to obtain that $\psi$ is constant, a contradiction.
\end{proof}

When both $h$ and $g$ are non-trivial perturbations, the appearance of islands is ensured if solutions attain more than one critical value.

\begin{proposition}\label{prop:islandscritical}
Let $g'\neq 0$ and $h'\neq 0$. Suppose $\psi$ is a solution to \eqref{eq:steadyEulerD} whose set of critical points is disconnected. Then, $\psi$ has islands.
\end{proposition}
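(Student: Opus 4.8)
The plan is to argue by contrapositive: assume $\psi$ is laminar (no islands) and non-constant, and show that its set of critical points is connected. Since $\psi$ is laminar, its level sets foliate $D_{g,h}$, and by the general structure of laminar flows the critical set lies inside closed level sets. The key point is to rule out the possibility of two distinct closed non-contractible critical streamlines $\Gamma_1, \Gamma_2$ carrying different critical values $c_1 \neq c_2$, as well as the possibility of a contractible critical streamline.

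\textbf{Step 1: Reduce to non-contractible critical curves.} First I would invoke Lemma \ref{lemma:nonzeroFc0} (or rather the structural results it builds on, e.g.\ \cite[Lemma 2.3]{drivas2024geometric}) together with the laminarity hypothesis to conclude that every connected component of the critical set is a closed non-contractible $C^1$ curve along which $\psi$ equals some constant and $F$ of that constant is nonzero. A contractible critical streamline would, as in Case 4 of Proposition \ref{islandsopen}, allow one to extend $\psi$ by a constant into the enclosed region and then invoke the unique continuation principle \cite{nadirashvili1986uniqueness} to force $\psi$ constant, contradicting non-constancy; if the constant were a critical value with $F\neq 0$ one instead gets monotonicity nearby and the streamline cannot be purely critical. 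So we are left with finitely many disjoint closed non-contractible critical curves $\Gamma_1, \dots, \Gamma_m$, ordered by height, and by assumption $m \geq 2$.

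\textbf{Step 2: Peel off the top sub-domain and apply the symmetry/rigidity result.} Let $\Gamma_m$ be the topmost critical curve. Consider $\psi$ restricted to the region $D_{\Gamma_m, h}$ between $\Gamma_m$ and the top boundary: this is a laminar steady Euler flow with $\psi = c_m$, $\nabla\psi = 0$ on $\Gamma_m$ and $\psi = c_h$ on the graph $y = h(x)$ — an overdetermined problem of exactly the type handled in \cite{drivas2024geometric}. I would argue, as in the proof of Lemma \ref{lemma:nonzerofluidvel}, that if $c_m = c_h$ then $\partial_y \psi \equiv 0$ on the top boundary and Corollary 1.1 of \cite{drivas2024geometric} forces the top boundary to be flat, contradicting $h' \neq 0$; hence $c_m \neq c_h$ and $\psi$ has no critical points in the open region $D_{\Gamma_m, h}$, so it is monotone there and foliated by non-contractible graphs. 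Symmetrically, working at the bottom with $\Gamma_1$ and using $g' \neq 0$, one gets $c_1 \neq c_g$ and no critical points in $D_{g,\Gamma_1}$.

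\textbf{Step 3: Eliminate the middle region and conclude.} The remaining region $D_{\Gamma_1,\Gamma_m}$ contains all the critical curves. Between consecutive critical curves $\Gamma_i$ and $\Gamma_{i+1}$ the flow is laminar with $\nabla\psi = 0$ on both bounding curves and $\psi = c_i, \psi = c_{i+1}$ there. If $c_i = c_{i+1}$, one extends $\psi$ by that constant across the slab and unique continuation \cite{nadirashvili1986uniqueness} forces $\psi$ constant globally, a contradiction; if $c_i \neq c_{i+1}$, then $\psi$ is non-constant and critical-point-free in the open slab, so by monotonicity it takes each intermediate value on a single non-contractible streamline, and in particular $\Delta\psi = F(\psi)$ with $F(c_i) \neq 0$ on $\Gamma_i$ means $\partial_\nu^2\psi \neq 0$ transverse to $\Gamma_i$ — but then $\psi$ is strictly monotone on \emph{both} sides of $\Gamma_i$, which is only consistent with $\Gamma_i$ being either a local max or local min curve of $\psi$. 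Since $c_1 \neq c_g$ at the bottom and $c_m \neq c_h$ at the top and the flow is monotone in the outer caps, a standard degree/ordering argument on the values $c_1, \dots, c_m$ forces adjacent critical values to coincide pairwise once $m \geq 2$, giving the contradiction via unique continuation. Hence $m = 1$ and the critical set is connected.

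\textbf{Main obstacle.} The delicate part is Step 3: carefully tracking how $\psi$ behaves across each interior critical curve $\Gamma_i$ — distinguishing ``crossing'' critical curves (local saddle-type, where $\psi$ is monotone through) from ``turning'' critical curves (local extremum, where $\psi$ reverses) — and showing that a laminar foliation is incompatible with having $m \geq 2$ turning curves or with any crossing curve carrying a nonzero $F$-value. This requires a clean local normal-form analysis near $\Gamma_i$ using $F(c_i)\neq 0$ (so $\partial_\nu^2\psi \neq 0$) of the kind done in Lemma \ref{lemma:nonzeroFc0}, combined with the global constraint that between the outermost critical curves and the non-flat boundaries the flow is forced to be genuinely monotone. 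Assembling these local pictures into the global contradiction — that a laminar non-constant flow in a doubly non-flat channel has exactly one critical value and one critical curve — is where the real work lies; the rigidity inputs from \cite{drivas2024geometric} and the unique continuation from \cite{nadirashvili1986uniqueness} do the heavy lifting at each step.
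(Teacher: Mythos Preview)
Your Steps~1--2 are reasonable scaffolding, but Step~3 is where the argument breaks down, and the paper's proof shows that essentially all of Steps~2 and~3 can be replaced by one clean application of the rigidity result you already cite.

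The gap is this: in Step~3 you stand in the slab $D_{\Gamma_i,\Gamma_{i+1}}$ between two consecutive critical curves, where \emph{both} boundaries carry the overdetermined data $\psi=\text{const}$, $\nabla\psi=0$. At that point you launch into a case split on $c_i=c_{i+1}$ versus $c_i\neq c_{i+1}$ and then appeal to a ``standard degree/ordering argument'' which is never made precise; alternating local-max and local-min critical curves are perfectly consistent with monotone caps, so it is not clear what global obstruction you are invoking. Moreover, your unique-continuation step in the case $c_i=c_{i+1}$ is not justified: $\psi$ is already defined (and a priori non-constant) in the slab, so there is nothing to ``extend by that constant'', and replacing $\psi$ by $c_i$ there only yields a solution if $F(c_i)=0$, which you argued in Step~1 is \emph{not} the case.

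What the paper does instead is apply Corollary~1.1 of \cite{drivas2024geometric} directly to the slab $D_{\Gamma_{c_1},\Gamma_{c_2}}$ between any two critical curves. Because both boundaries are overdetermined, the rigidity result forces both $\Gamma_{c_i}$ to be flat horizontal lines. Once one critical curve is known to be a flat line with $\nabla\psi=0$ along it, Lemma~3 of \cite{drivas2024islands} together with $h'\not\equiv 0$ and $g'\not\equiv 0$ forces $\psi$ to be constant on all of $D_{g,h}$, which is the contradiction. In short, the move you are missing is to apply the overdetermined rigidity to the region bounded by \emph{two critical curves} rather than to the caps bounded by one critical curve and one domain boundary; that single change collapses your entire Step~3 normal-form program to a one-line citation.
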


\begin{proof}
If $\psi$ is laminar, there exists $c_1,c_2$ and closed non-contractible and disjoint level sets $\Gamma_{c_i}$ such that  $\psi|_{\Gamma_{c_i}}=c_i$ and  $(\nabla\psi)|_{\Gamma_{c_i}}=0$. Then, $\psi$ is a steady laminar solution of Euler in $D_{\Gamma_{c_1},\Gamma_{c_2}}$, say, with overdetermined boundary conditions. Hence, Corollary 1.1 in \cite{drivas2024geometric} shows that $\Gamma_{c_i}$ are flat horizontal curves where $\nabla\psi = 0$. Hence, \cite[Lemma 3]{drivas2024islands} shows that, since $h'\neq 0$ and $g'\neq 0$, then $\psi$ must be constant in $D_{g,h}$, a contradiction.
\end{proof}

A first consequence of the proposition is for non-linearities $F$ for which the associated solution $\psi_0$ to \eqref{eq:steadyEulerD} in $\mathbb{T}\times[-1,1]$ has more than one critical point. Then, the solution associated to any non-trivial boundary perturbation will necessarily develop islands. 

A second corollary of the proposition is that all eigenfunctions but the first of the Dirichlet Laplacian in $D_{g,h}$ with at least one non-trivial boundary must have have islands. Indeed, since the first eigenfunction of the Dirichlet Laplacian does not change sign and the set of eigenfunctions is $L^2$ orthogonal, all other eigenfunctions do change sign. In particular, they attain their maximum and minimum in $D_{g,h}$ and the proposition applies. This situation is to be compared with that of $\mathbb{T}\times[-1,1]$, where there exists shear-flow eigenfunctions $\phi_n=\phi_n(y)$ associated to infinitely many eigenvalues.

Finally, we remark the following result on partially ``free-boundary"  equilibria (see \cite{CDG22}):
 \begin{lemma}\label{lemma:islandsingularbdry}
If $g'\neq 0$ and $|\nabla\psi| = a\geq 0$ on $\partial D_g$ then, any stagnant solution $\psi$ possess islands.
\end{lemma}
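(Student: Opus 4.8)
The plan is to mimic the proof of Proposition~\ref{prop:ctnbottombdry} but replace the flat-bottom hypothesis with the partially free-boundary condition $|\nabla\psi| = a$ on $\partial D_g$, which still furnishes an overdetermined problem that triggers a symmetry/rigidity result. First I would argue by contradiction: suppose $\psi$ is laminar and non-constant. By the stagnation hypothesis and Proposition~\ref{prop:islandscritical} (or Proposition~\ref{prop:islandscritical}'s underlying dichotomy), the critical set of $\psi$ is a single closed non-contractible level set $\Gamma$ on which $\psi = c_0$ and $\nabla\psi = 0$. Consider $u = \nabla^\perp\psi$ as a steady laminar Euler flow on the sub-annulus $D_{g,\Gamma}$ bounded below by $\partial D_g$ and above by $\Gamma$. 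On $\Gamma$ the conditions are overdetermined ($\psi = c_0$ and $\nabla\psi = 0$), and on $\partial D_g$ we have $\psi = c_g$ together with $|\nabla\psi| = a$.

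The heart of the matter is the case split on $a$. If $a = 0$ then $\nabla\psi = 0$ on $\partial D_g$ as well, so $\psi$ has vanishing gradient on the entire boundary of $D_{g,\Gamma}$; by elliptic stability of $F$ (the hypothesis $F' > -\lambda_1$) applied on $D_{g,\Gamma}$, one concludes $\nabla\psi \equiv 0$ and hence $\psi$ is constant on $D_{g,\Gamma}$, so $F(c_0) = 0$; then \cite[Lemma 3]{drivas2024islands}, together with $g' \not\equiv 0$, forces $\psi$ to be constant on all of $D_{g,h}$, a contradiction. If $a > 0$, then $\partial D_g$ is a genuine free boundary for the overdetermined problem on $D_{g,\Gamma}$ with $\psi = c_g$, $|\nabla\psi| = a$ there and $\psi = c_0$, $\nabla\psi = 0$ on $\Gamma$; here I would invoke the classification of such partially free-boundary laminar equilibria --- precisely the Serrin-type/moving-plane rigidity packaged in \cite[Corollary~1.1]{drivas2024geometric} (see also \cite{CDG22}) --- to conclude that both $\Gamma$ and $\partial D_g$ must be flat horizontal curves and $\psi$ is a shear on $D_{g,\Gamma}$. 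But $\partial D_g$ flat contradicts $g' \not\equiv 0$ directly.

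Thus in either case we reach a contradiction, so $\psi$ cannot be laminar, i.e. it has islands. The main obstacle I anticipate is making rigorous the $a>0$ step: one needs a rigidity theorem for laminar Euler equilibria on an annulus with one overdetermined ``singular streamline'' boundary and one free boundary carrying a constant speed condition. If \cite[Corollary~1.1]{drivas2024geometric} is stated only for a flat companion boundary rather than a free boundary with $|\nabla\psi|$ prescribed, one would instead run the moving-plane argument directly: reflect $D_{g,\Gamma}$ across horizontal planes $\{y = \mathrm{const}\}$ and slide; the process can only stop at internal tangency or boundary-orthogonality, but reflected graphs do not self-intersect and are never orthogonal to horizontal planes, and the free boundary $\partial D_g$ is, by hypothesis, a graph over $\mathbb{T}$, so (exactly as in the proof of Lemma~\ref{lemma:nonzeroFc0}, following \cite[Lemma~5.1]{drivas2024geometric}) the Hopf lemma gives a contradiction unless everything is flat --- at which point $g' \equiv 0$, contradicting the hypothesis. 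A secondary subtlety is justifying that the constant-speed value $a$ is the same on all of $\partial D_g$ if $\partial D_g$ has several components, but the hypothesis already asserts $|\nabla\psi| = a$ on $\partial D_g$ as a whole, so this is not an issue.
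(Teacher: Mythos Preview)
Your overall strategy---assume laminarity, locate the singular streamline $\Gamma$, and apply the Serrin-type rigidity of \cite[Corollary~1.1]{drivas2024geometric} on the sub-annulus $D_{g,\Gamma}$---is exactly the paper's. The paper's proof is a single stroke: since $\psi$ is constant with $\nabla\psi=0$ on $\Gamma$ and $\psi$ is constant with $|\nabla\psi|=a$ on $\partial D_g$, both boundary components carry overdetermined data, so Corollary~1.1 forces both $\Gamma$ and $\partial D_g$ to be flat horizontal lines, contradicting $g'\not\equiv 0$. No case split on $a$ is needed; the rigidity result already covers $a\geq 0$ uniformly, and your worry that it might be stated only for a flat companion boundary is misplaced---it is precisely a result about overdetermined (free-boundary) data on each component.

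Your separate treatment of $a=0$ via elliptic stability is correct in spirit (each of $\partial_x\psi,\partial_y\psi$ solves the linearized equation with zero Dirichlet data on $\partial D_{g,\Gamma}$), but it imports the hypothesis $F'>-\lambda_1$ from Theorem~\ref{thm:genericityisland}. The appendix lemmas are stated for arbitrary $F\in C^1$ with no ellipticity assumption, so that branch as written appeals to something not available here. Dropping the case split and invoking Corollary~1.1 directly both shortens the argument and keeps it within the stated hypotheses.
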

\begin{proof}
Assume again towards a contradiction that $\psi$ is laminar. Then, there existence of some curve $\Gamma$ where $\nabla\psi=0$. Next, since $\psi$ is a laminar steady solution of Euler in $D_g^\Gamma$ and enjoys overdetermined boundary conditions for $\psi$, recall that $\nabla\psi=0$ on $\Gamma$ and $|\nabla\psi|=a\geq 0$ on $\partial D_g$,  Corollary 1.1 in \cite{drivas2024geometric} implies both $\partial D_g$ and $\Gamma$ are flat horizontal curves, contradicting $g'\neq 0$.
\end{proof}

\section{Estimates on the error function}\label{estappend}
This section is devoted to the proof of Proposition \ref{prop:boundsrve} which states that
\begin{equation}\label{eq:rveappendix}
\begin{cases}
\left(\Delta-F'(\psi_0)\right)r_\ve = \widetilde{\N}(r_\ve)(x,y) & \text{ in } D_\ve, \\
r_\ve(x,H(x)+\ve h(x))= r_\ve^{top}(x) & \text{ for } x\in \mathbb{T}, \\
r_\ve(x, G(x)+\ve g(x))= r_\ve^{bot}(x) & \text{ for } x\in\mathbb{T}, \\
\end{cases}
\end{equation}
where
\begin{align*}
\widetilde{\N}(v) = F(\psi_0 + \ve\varphi + v) - F'(\psi_0)r_\ve -\Delta\psi_0 - \ve\Delta \varphi
\end{align*}
and
\begin{align*}
r_\ve^{top}(x)&:= c_H-  \psi_0(x,H(x)+\ve h(x)) - \ve  \varphi(x,H(x)+\ve h(x)), \\
r_\ve^{bot}(x)&:= c_G-  \psi_0(x,G(x)+\ve g(x))- \ve\varphi(x,G(x)+\ve g(x)) 
\end{align*}
admits a unique solution $r_\ve$ with $\Vert r_\ve \Vert_{C^{2,\alpha}(D_\ve)}\lesssim \ve^2$, for $\ve>0$ small enough. Define
\begin{align*}\label{eq:defeta}
\eta_\ve(x,y) := r_\ve^{top}(x)\frac{y-G(x)-\ve g(x)}{H(x)-G(x) + \ve h(x) - \ve g(x)} + r_\ve^{bot}(x)\frac{H(x)+ \ve h(x) - y}{H(x)-G(x) + \ve h(x) - \ve g(x)}, 
\end{align*}
which linearly interpolates between the two boundary conditions and further denote $\beta_\ve(x,y) := (\Delta - F'(\psi_0))\eta_\ve(x,y)$. Clearly,
\begin{align*}
\Vert \eta_\ve \Vert_{C^{2,\alpha}(D_\ve)}\lesssim \ve^2, \quad \Vert \beta_\ve\Vert_{C^{0,\alpha}(D_\ve)}\lesssim \ve^2.
\end{align*}
Then, for $u_\ve(x,y) = r_\ve(x,y) - \eta_\ve(x,y)$, we have that
\begin{equation}\label{eq:uve}
\begin{cases}
\left(\Delta-F'(\psi_0)\right)u_\ve = \N(u_\ve) - \beta_\ve   & \text{ in } D_\ve, \\
u_\ve = 0 & \text{ on } \partial D_\ve, 
\end{cases}
\end{equation}
with
\begin{equation}\label{eq:defNappendix}
\begin{split}
\N(v)(x,y) &= F(\psi_0 + \ve{\varphi} + v + \eta_\ve) - F(\psi_0) - F'(\psi_0)\left(\ve {\varphi} + v + \eta_\ve\right) \\
&\quad +(F(\psi_0) - \psi_0'') -\ve (\Delta - F'(\psi_0)){\varphi}.
\end{split}
\end{equation}
Observe that both $F(\psi_0)-\psi_0''$ and $(\Delta - F'(\psi_0))\varphi$ identically vanish on $D_0$, while for $(x,y)\in D_\ve\setminus D_0$, they are $\ve$ small, since we can use Taylor expansion nearby $\partial D_0$. Proposition \ref{prop:boundsrve} is a consequence of the following result for $u_\ve$.

\begin{proposition}\label{prop:boundsuve}
Let $\alpha\in (0,1)$. Let $g,h\in C^{2,\alpha}(\mathbb{T})$ and $F\in C^4$. Then, there $\ve_0>0$ such that for all $0<\ve < \ve_0$, there exists a unique solution $u_\ve$ to \eqref{eq:uve} and it is such that
\begin{align*}
\Vert u_\ve \Vert_{C^{2,\alpha}(D_\ve)} \lesssim \ve^2 \left( \Vert F \Vert_{C^4} + \Vert h \Vert_{C^{2,\alpha}} + \Vert g \Vert_{C^{2,\alpha}} \right).
\end{align*}
In particular, there holds
\begin{align*}
\Vert r_\ve \Vert_{C^{2,\alpha}(D_\ve)} \lesssim \ve^2 \left( \Vert F \Vert_{C^4} + \Vert h \Vert_{C^{2,\alpha}} + \Vert g \Vert_{C^{2,\alpha}} \right)
\end{align*}
for all $0 < \ve < \ve_*$.
\end{proposition}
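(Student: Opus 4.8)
\textbf{Proof proposal for Proposition \ref{prop:boundsuve}.} The plan is to solve \eqref{eq:uve} by a Banach fixed point argument in the ball $B_M := \{\, v\in C^{2,\alpha}(D_\ve):\ \|v\|_{C^{2,\alpha}(D_\ve)}\le M\ve^2\,\}$, applied to the map $v\mapsto \mathcal{L}_\ve\big(\N(v)-\beta_\ve\big)$, where $\mathcal{L}_\ve$ denotes the solution operator of the linear Dirichlet problem $(\Delta-F'(\psi_0))u=\cdot$, $u|_{\partial D_\ve}=0$, and $\N$ is given by \eqref{eq:defNappendix}. The two structural facts that make this work are: (a) $\N$ is, up to terms supported on the thin collar $D_\ve\setminus D_0$, a \emph{quadratic} Taylor remainder in its argument $\ve\varphi+v+\eta_\ve$, so $\N(0)$ is $O(\ve^2)$ and $\N$ is $O(\ve)$-Lipschitz on $B_M$; and (b) $\beta_\ve=(\Delta-F'(\psi_0))\eta_\ve$ is $O(\ve^2)$ because the boundary data of $r_\ve$ are $O(\ve^2)$, which is the whole purpose of the choice of boundary values for $\varphi$ in \eqref{eq:HGvarphi}.

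First I would establish the \emph{uniform-in-$\ve$ linear theory}. Since $\psi_0$ (the fixed extension) is fixed and $F\in C^4$, the coefficient $F'(\psi_0)$ lies in $C^{0,\alpha}$ with a bound independent of $\ve$; and since $F'>-\lambda_1(D_0)$ while $\lambda_1(D_\ve)\to\lambda_1(D_0)$ as $D_\ve\to D_0$ in $C^{2,\alpha}$, for $\ve$ small one has $F'(\psi_0)>-\lambda_1(D_\ve)$, so $0$ is not in the spectrum of $\Delta-F'(\psi_0)$ on $H^1_0(D_\ve)$. Because $\partial D_\ve$ is $C^{2,\alpha}$ with norms bounded uniformly in $\ve$, flattening the boundary by a near-identity $C^{2,\alpha}$ diffeomorphism and applying global Schauder estimates produces $\mathcal{L}_\ve:C^{0,\alpha}(D_\ve)\to C^{2,\alpha}(D_\ve)$ with $\|\mathcal{L}_\ve\|\le C$ for a constant $C$ independent of $\ve$.

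Next I would verify the $O(\ve^2)$ bounds on the data. For the boundary data: Taylor expanding $\psi_0(x,H(x)+\ve h(x))$ and $\ve\varphi(x,H(x)+\ve h(x))$ about $y=H(x)$, the $O(1)$ terms cancel since $\psi_0(x,H(x))=c_H$, and the $O(\ve)$ terms cancel since $\varphi(x,H(x))=-h(x)\partial_y\psi_0(x,H(x))$ by \eqref{eq:HGvarphi}; hence $r_\ve^{top}=O(\ve^2)$ in $C^{2,\alpha}(\mathbb{T})$, and likewise $r_\ve^{bot}$. Therefore $\|\eta_\ve\|_{C^{2,\alpha}(D_\ve)}\lesssim\ve^2$ and $\|\beta_\ve\|_{C^{0,\alpha}(D_\ve)}\lesssim\ve^2$, as already asserted before the proposition. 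For $\N(0)$: the Taylor-remainder part $F(\psi_0+\ve\varphi+\eta_\ve)-F(\psi_0)-F'(\psi_0)(\ve\varphi+\eta_\ve)$ is bounded in $C^{0,\alpha}$ by $C\|F\|_{C^2}(\ve\|\varphi\|+\|\eta_\ve\|)^2\lesssim\ve^2$; the two remaining ``defect'' terms $F(\psi_0)-\Delta\psi_0$ and $\ve(\Delta-F'(\psi_0))\varphi$ vanish identically on $D_0$, and since $D_\ve\setminus D_0$ lies within distance $C\ve$ of $\partial D_0$ and the extensions are smooth enough that these defects vanish to the appropriate order along $\partial D_0$, Taylor expansion in the normal direction gives $O(\ve^2)$ bounds in $C^{0,\alpha}(D_\ve\setminus D_0)$. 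Altogether $\|\N(0)-\beta_\ve\|_{C^{0,\alpha}(D_\ve)}\lesssim\ve^2(\|F\|_{C^4}+\|h\|_{C^{2,\alpha}}+\|g\|_{C^{2,\alpha}})$.

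Finally I would run the contraction. Writing $\N(v_1)-\N(v_2)=\big(\int_0^1 F'(\psi_0+\ve\varphi+\eta_\ve+sv_2+s(v_1-v_2))\,\rmd s-F'(\psi_0)\big)(v_1-v_2)+\dots$ and using that on $B_M$ the argument of $F'$ differs from $\psi_0$ by $O(\ve)$ in $C^{2,\alpha}$, one gets $\|\N(v_1)-\N(v_2)\|_{C^{0,\alpha}}\lesssim\ve\,\|v_1-v_2\|_{C^{2,\alpha}}$; hence for $v\in B_M$, $\|\mathcal{L}_\ve(\N(v)-\beta_\ve)\|_{C^{2,\alpha}}\le C\big(\|\N(0)-\beta_\ve\|_{C^{0,\alpha}}+C\ve\,\|v\|_{C^{2,\alpha}}\big)\le M\ve^2$ for $M$ large and $\ve$ small, and the Lipschitz constant is $\le CC'\ve<1$ for $\ve$ small. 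Banach's theorem yields a unique fixed point $u_\ve\in B_M$; uniqueness in the full space follows from the elliptic stability of $F'$ (two solutions differ by a function annihilated by an operator $\Delta-c(x)$ with $c>-\lambda_1(D_\ve)$), and then $r_\ve=u_\ve+\eta_\ve$ inherits the bound $\|r_\ve\|_{C^{2,\alpha}(D_\ve)}\lesssim\ve^2(\|F\|_{C^4}+\|h\|_{C^{2,\alpha}}+\|g\|_{C^{2,\alpha}})$, which is Proposition \ref{prop:boundsrve}. The two points I expect to require the most care are the uniformity in $\ve$ of the solvability and Schauder constants for the family $\{D_\ve\}$, and the bookkeeping that the defect terms on the $\ve$-collar $D_\ve\setminus D_0$ are genuinely $O(\ve^2)$ rather than merely $O(\ve)$; everything else is routine.
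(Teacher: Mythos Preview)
Your proposal is correct and follows essentially the same approach as the paper: a Banach fixed point for the map $v\mapsto (\Delta-F'(\psi_0))^{-1}_{\mathrm D}(\N(v)-\beta_\ve)$, relying on uniform-in-$\ve$ Schauder estimates and the quadratic structure of $\N$. The only noteworthy difference is cosmetic: the paper runs the contraction in the larger ball $X_\ve=\{v\in C^{0,\alpha}(D_\ve):\|v\|_{C^{0,\alpha}}\le \ve\}$ and then bootstraps to the $C^{2,\alpha}$ bound of size $\ve^2$ at the end, whereas you work directly in $B_M=\{v\in C^{2,\alpha}(D_\ve):\|v\|_{C^{2,\alpha}}\le M\ve^2\}$; both choices are fine, and your flagging of the collar-defect bookkeeping as the delicate point is apt (the paper's formula \eqref{eq:usefulN} tacitly drops those terms).
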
 

The proof is straightforward and follows from a fixed point argument. For this, we define 
\begin{align*}
X_\ve = \left \lbrace v\in C^{0,\alpha}(D_\ve) : \Vert v \Vert_{C^{0,\alpha}(D_\ve)} \leq  \ve \right\rbrace
\end{align*}
together with the operators 
\begin{align*}\label{def:K}
K_\ve(v)(x,y) := \N(v)(x,y) - \beta_{\ve}(x,y), \quad \widetilde{K}_\ve(v) := (\Delta- F'(\psi_0))_{\mathrm{D}}^{-1}K_\ve(v),
\end{align*}
where here $w:=(\Delta- F'(\psi_0))_{\mathrm{D}}^{-1}f$ denotes the solution operator of the elliptic problem
\begin{equation}\label{eq:w}
\begin{cases}
\left(\Delta-F'(\psi_0)\right)w = f   & \text{ in } D_\ve, \\
w = 0 & \text{ on } \partial D_\ve.
\end{cases}
\end{equation}

To apply the Banach fixed point argument, we first show that $\widetilde{K}_\ve$ maps $X_\ve$ to itself.

\begin{lemma}\label{lemma:invariantX}
There exists $\ve_*>0$ such that $\widetilde{K}_\ve(v)\in X_\ve$, for all $v\in X_\ve$, for all $0 < \ve < \ve_*$.
\end{lemma}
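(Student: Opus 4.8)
The plan is to verify that $\widetilde{K}_\ve$ maps the ball $X_\ve$ into itself by estimating $\Vert \widetilde{K}_\ve(v)\Vert_{C^{0,\alpha}(D_\ve)}$ via the decomposition $\widetilde K_\ve(v)=(\Delta-F'(\psi_0))_{\mathrm D}^{-1}(\N(v)-\beta_\ve)$. First I would recall the Schauder bound for the solution operator: since $F'(\psi_0)>-\lambda_1$, the operator $(\Delta-F'(\psi_0))_{\mathrm D}$ is invertible on $D_\ve$ and, because the domains $D_\ve$ converge to $D_0$ in $C^{2,\alpha}$ as $\ve\to 0$, the constant in $\Vert (\Delta-F'(\psi_0))_{\mathrm D}^{-1}f\Vert_{C^{2,\alpha}(D_\ve)}\lesssim \Vert f\Vert_{C^{0,\alpha}(D_\ve)}$ can be taken uniform in $\ve$ for $\ve$ small. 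Hence it suffices to show $\Vert \N(v)\Vert_{C^{0,\alpha}(D_\ve)}+\Vert\beta_\ve\Vert_{C^{0,\alpha}(D_\ve)}\lesssim \ve^2$ for $v\in X_\ve$, since then $\Vert\widetilde K_\ve(v)\Vert_{C^{0,\alpha}}\le \Vert\widetilde K_\ve(v)\Vert_{C^{2,\alpha}}\lesssim \ve^2\le\ve$.

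Next I would estimate $\N(v)$ term by term using its definition \eqref{eq:defNappendix}. The first group $F(\psi_0+\ve\varphi+v+\eta_\ve)-F(\psi_0)-F'(\psi_0)(\ve\varphi+v+\eta_\ve)$ is the second-order Taylor remainder of $F$, so it is bounded in $C^{0,\alpha}$ by $\Vert F\Vert_{C^2}$ times the square of $\Vert\ve\varphi+v+\eta_\ve\Vert_{C^{0,\alpha}}$; since $\Vert\varphi\Vert_{C^{2,\alpha}}\lesssim 1$, $\Vert v\Vert_{C^{0,\alpha}}\le\ve$, and $\Vert\eta_\ve\Vert_{C^{2,\alpha}}\lesssim\ve^2$, this whole piece is $\lesssim \ve^2$. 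To make the Taylor remainder estimate in Hölder norm clean I would write it as $(\ve\varphi+v+\eta_\ve)^2\int_0^1(1-s)F''(\psi_0+s(\ve\varphi+v+\eta_\ve))\,ds$ and use that $C^{0,\alpha}$ is a Banach algebra together with $F\in C^4\subset C^2$. For the remaining two terms, $F(\psi_0)-\psi_0''$ and $\ve(\Delta-F'(\psi_0))\varphi$: both vanish identically on $D_0$ because $\psi_0$ solves \eqref{eq:steadyEulerDve} and $\varphi$ solves \eqref{eq:HGvarphi} there (after extension these hold on $D_0$), so on $D_\ve\setminus D_0$ — a strip of width $O(\ve)$ adjacent to $\partial D_0$ — one Taylor-expands the $C^3$ extensions off $\partial D_0$ to get a pointwise bound of size $O(\ve)$ and, accounting for the $C^{0,\alpha}$ seminorm over a set of diameter $O(\ve)$ and multiplying the second term by its explicit $\ve$ prefactor, a contribution of size $O(\ve^2)$. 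The bound $\Vert\beta_\ve\Vert_{C^{0,\alpha}(D_\ve)}\lesssim\ve^2$ is already recorded in the text right after the definition of $\eta_\ve$.

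Collecting these estimates gives $\Vert K_\ve(v)\Vert_{C^{0,\alpha}(D_\ve)}\lesssim \ve^2(\Vert F\Vert_{C^4}+\Vert h\Vert_{C^{2,\alpha}}+\Vert g\Vert_{C^{2,\alpha}})$, and applying the uniform Schauder bound yields $\Vert\widetilde K_\ve(v)\Vert_{C^{2,\alpha}(D_\ve)}\lesssim \ve^2$; choosing $\ve_*$ small enough that the implied constant times $\ve_*$ is at most $1$ forces $\Vert\widetilde K_\ve(v)\Vert_{C^{0,\alpha}(D_\ve)}\le\ve$, so $\widetilde K_\ve(v)\in X_\ve$. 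The main obstacle I anticipate is not any single estimate but the bookkeeping of uniformity in $\ve$: one must be careful that the Schauder constant, the extension-operator norms, and the Taylor-expansion constants for the $C^3$ extensions near $\partial D_0$ are all independent of $\ve$, which is where the standing hypothesis $F'>-\lambda_1$ on $D_{G,H}$ (stable under small domain perturbations) and the boundedness of the extension operator of \cite[Lemma 6.37]{gilbargtrudinger} are used. A minor point to handle with care is that $v\in X_\ve$ is only controlled in $C^{0,\alpha}$, not $C^{2,\alpha}$, which is exactly why $\N$ was designed so that $v$ enters only through $F$ and not through any derivatives — so no derivative of $v$ ever appears in $K_\ve(v)$ and the algebra-property estimate in $C^{0,\alpha}$ closes.
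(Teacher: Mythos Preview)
Your proposal is correct and follows essentially the same route as the paper: invoke the uniform-in-$\ve$ Schauder bound for $(\Delta-F'(\psi_0))_{\mathrm D}^{-1}$, express the main part of $\N(v)$ as the second-order Taylor remainder $(\ve\varphi+v+\eta_\ve)^2\int_0^1(1-s)F''(\cdots)\,ds$ (equivalently the paper's $\int_0^1\int_0^t F''\,ds\,dt$ in \eqref{eq:usefulN}) to get an $O(\ve^2)$ bound in $C^{0,\alpha}$, and combine with $\Vert\beta_\ve\Vert_{C^{0,\alpha}}\lesssim\ve^2$. Your handling of the two extension terms $(F(\psi_0)-\Delta\psi_0)$ and $\ve(\Delta-F'(\psi_0))\varphi$ is slightly more explicit than the paper's (which simply absorbs them into \eqref{eq:usefulN} after remarking beforehand that they are $\ve$-small on $D_\ve\setminus D_0$), but the argument is otherwise identical.
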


\begin{proof}
Let $\ve>0$ and $v\in X_\ve$, with that $\Vert v \Vert_{C^{0,\alpha}(D_\ve)}\leq \ve$. By Schauder standard estimates
(see e.g. \cite[Theorem 6.6]{gilbargtrudinger}), we have that
\begin{align*}
\Vert \widetilde{K}_\ve(v) \Vert_{C^{2,\alpha}} \leq c_0 \Vert  N(v) -\beta_\ve \Vert_{C^{0,\alpha}}.
\end{align*}
Here the Schauder constant $c_0=c_0(D_\ve)$ depends in principle on $D_\ve$ and thus also on $\ve$. However, since $D_\ve$ is a small and smooth perturbation of $D_0$, one can show that $c_0$ is uniformly bounded in $\ve$; we omit the details. Now, since $\Vert \beta_\ve \Vert_{C^{0,\alpha}(D_\ve)}\lesssim \ve^2$, there holds $c_0\Vert \beta_\ve \Vert_{C^{0,\alpha}(D_\ve)} \leq \frac{\ve}{2}$, for $\ve$ small enough. Next we shall see that $c_0\Vert \N(u)\Vert_{C^{0,\alpha}(D_\ve)}\leq \frac{\ve}{2}$. To do so, we note that 
\begin{equation}\label{eq:usefulN}
\N(u) = (\ve\varphi + \eta_\ve + u)^2\int_0^1 \int_0^t F''(\psi_0 + s(\ve\varphi + \eta_\ve + u)) \rmd s \rmd t.
\end{equation}
In particular,
\begin{align*}
\Vert \N(u)\Vert_{L^\infty(D_\ve)}\leq \Vert F'' \Vert_{L^\infty}\left( \Vert \ve\varphi + \eta_\ve \Vert_{L^\infty(D_\ve)}^2 + \Vert v \Vert_{L^\infty(D_\ve)}^2 \right) \leq \frac{\ve}{4c_0}
\end{align*}
for $\ve>0$ small enough. Likewise, we have
\begin{align*}
\left[ \N(v) \right]_{C^\alpha} &\leq 2\Vert F'' \Vert_{L^\infty}\left( \Vert \ve\varphi + \eta_\ve \Vert_{C^{0,\alpha}(D_\ve)}^2 + \Vert v \Vert_{C^{0,\alpha}(D_\ve)}^2 \right) \\
&\quad+ \Vert F ''' \Vert_{L^\infty}\left( \Vert \ve\varphi + \eta_\ve \Vert_{L^\infty(D_\ve)}^2 + \Vert v \Vert_{L^\infty(D_\ve)}^2 \right)\leq \frac{\ve}{4c_0},
\end{align*}
again for $\ve>0$ small enough. With this, we conclude that $\Vert \widetilde{K}_\ve(v) \Vert_{C^{0,\alpha}(D_\ve)}\leq \frac{\ve}{2}$, for all $v\in X_\ve$, for $\ve>0$ small enough.
\end{proof}

We next show that $\widetilde{K}_\ve$ is a contraction in $X_\ve$,  for $\ve>0$ small enough.
\begin{lemma}\label{lemma:contraction}
There exists $\ve_*>0$ such that 
\begin{align*}
\Vert \widetilde{K}_\ve(u) - \widetilde{K}_\ve(v) \Vert_{C^{0,\alpha}(D_\ve)} < \frac12 \Vert u - v \Vert_{C^{0,\alpha}(D_\ve)},
\end{align*}
for all $u,v\in X_\ve$ and all $0< \ve < \ve_*$.
\end{lemma}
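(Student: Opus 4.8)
The plan is to reduce the contraction bound to a Lipschitz-in-$v$ estimate for the nonlinearity $\N$, using that the affine term $\beta_\ve$ and the $v$-independent pieces of \eqref{eq:defNappendix} cancel in differences, so that $\widetilde{K}_\ve(u) - \widetilde{K}_\ve(v) = (\Delta- F'(\psi_0))_{\mathrm{D}}^{-1}\big(\N(u) - \N(v)\big)$. Writing $A := \psi_0 + \ve\varphi + \eta_\ve$, one has
\[
\N(u) - \N(v) = F(A+u) - F(A+v) - F'(\psi_0)(u-v) = (u-v)\,G(u,v),
\]
where $G(u,v) := \int_0^1\big(F'(A + v + t(u-v)) - F'(\psi_0)\big)\,\rmd t$.

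First I would bound $G(u,v)$ in $C^{0,\alpha}(D_\ve)$. For $u,v\in X_\ve$ the shift $A - \psi_0 + v + t(u-v) = \ve\varphi + \eta_\ve + v + t(u-v)$ has $C^{0,\alpha}(D_\ve)$ norm $\lesssim\ve$, using $\Vert\varphi\Vert_{C^{2,\alpha}}\lesssim 1$, $\Vert\eta_\ve\Vert_{C^{2,\alpha}}\lesssim\ve^2$ and $\Vert u\Vert_{C^{0,\alpha}},\,\Vert v\Vert_{C^{0,\alpha}}\le\ve$. Expanding once more, $F'(\psi_0 + h) - F'(\psi_0) = h\int_0^1 F''(\psi_0 + sh)\,\rmd s$ with $h := \ve\varphi + \eta_\ve + v + t(u-v)$, and applying the product estimate for Hölder norms — noting $\Vert h\Vert_{C^{0,\alpha}(D_\ve)}\lesssim\ve$, while $\int_0^1 F''(\psi_0 + sh)\,\rmd s$ is bounded in $C^{0,\alpha}(D_\ve)$ by a constant depending on $\Vert F\Vert_{C^4}$ and $\Vert\psi_0\Vert_{C^{2,\alpha}}$ since $F\in C^4$ — gives $\Vert G(u,v)\Vert_{C^{0,\alpha}(D_\ve)}\lesssim\ve$, with implicit constant depending only on the fixed data $F,g,h,\psi_0$ and the domain, not on $u,v$ or $\ve$. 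A further application of the product estimate then yields
\[
\Vert\N(u)-\N(v)\Vert_{C^{0,\alpha}(D_\ve)} = \Vert(u-v)\,G(u,v)\Vert_{C^{0,\alpha}(D_\ve)}\lesssim\Vert u-v\Vert_{C^{0,\alpha}(D_\ve)}\,\Vert G(u,v)\Vert_{C^{0,\alpha}(D_\ve)}\lesssim\ve\,\Vert u-v\Vert_{C^{0,\alpha}(D_\ve)}.
\]

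Finally I would close the argument as in the proof of Lemma \ref{lemma:invariantX}: the Schauder estimate for \eqref{eq:w} gives
\[
\Vert\widetilde{K}_\ve(u) - \widetilde{K}_\ve(v)\Vert_{C^{0,\alpha}(D_\ve)}\le\Vert\widetilde{K}_\ve(u) - \widetilde{K}_\ve(v)\Vert_{C^{2,\alpha}(D_\ve)}\le c_0\,\Vert\N(u)-\N(v)\Vert_{C^{0,\alpha}(D_\ve)}\le C c_0\,\ve\,\Vert u-v\Vert_{C^{0,\alpha}(D_\ve)},
\]
so it suffices to pick $\ve_*$ with $Cc_0\ve_*<\tfrac12$. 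The only delicate point — already handled in the proof of Lemma \ref{lemma:invariantX} — is that the Schauder constant $c_0=c_0(D_\ve)$ can be taken uniform in $\ve$, since $D_\ve$ is a $C^{2,\alpha}$-small perturbation of $D_0$. Beyond that and the routine product-rule bookkeeping for Hölder norms, I expect no substantial obstacle.
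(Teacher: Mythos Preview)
Your proposal is correct and follows essentially the same approach as the paper: factor $\N(u)-\N(v)$ as $(u-v)$ times a coefficient that is $O(\ve)$ in $C^{0,\alpha}$, then apply the Schauder estimate with constant $c_0$ uniform in $\ve$. The only cosmetic difference is the Taylor bookkeeping: the paper starts from the second-order remainder formula \eqref{eq:usefulN} and differences it to obtain a two-term factor $\mathcal{H}_\ve(u,v)$, whereas you apply the fundamental theorem of calculus once to $F$ and once more to $F'$ to get the single factor $G(u,v)$; your route is slightly more streamlined but the substance is identical.
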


\begin{proof}
For  $u,v\in X_\ve$ let $w:=\widetilde{K}_\ve(u) - \widetilde{K}_\ve(v)$, we have that 
\begin{equation}
\begin{cases}
\Delta w  - F'(\psi_0)w=  \N(u) - \N(v)  & \text{ in } D_\ve, \\
w = 0 & \text{ on } \partial D_\ve.
\end{cases}
\end{equation}
Appealing to the Schauder estimates of Theorem 6.6 in \cite{gilbargtrudinger}, we have that
\begin{align*}
\Vert w \Vert_{C^{2,\alpha}} \leq c_0 \Vert N(u) - N(v) \Vert_{C^{0,\alpha}}.
\end{align*}
Next, from \eqref{eq:usefulN}, there holds
\begin{align*}
\N(u) - \N(v) &= \left( (\ve\varphi + \eta_\ve + u)^2 - (\ve\varphi + \eta_\ve + v)^2 \right) \int_0^1 \int_0^t F''(\psi_0 + s(\ve\varphi + \eta_\ve + u)) \rmd s \rmd t \\
&\quad +(\ve\varphi + \eta_\ve + v)^2 \left( \int_0^1 \int_0^t F''(\psi_0 + s(\ve\varphi + \eta_\ve + u)) - F''(\psi_0 + s(\ve\varphi + \eta_\ve + v)) \rmd s \rmd t \right). \\
&= (u + v + 2\varphi + 2\eta_\ve)(u-v) \int_0^1 \int_0^t F''(\psi_0 + s(\ve\varphi + \eta_\ve + u)) \rmd s \rmd t \\
&\quad + (\ve\varphi + \eta_\ve + v)^2 (u-v) \int_0^1 \int_0^t \int_0^1 sF'''(\psi_0 + s(\ve\varphi + \eta_\ve + v + r(u-v))) \rmd r \rmd s \rmd t \\
&=\mathcal{H}_\ve(u,v) (u-v).
\end{align*}
In particular, we see that
\begin{align*}
\Vert \N(u) - \N(v) \Vert_{L^\infty(D_\ve)}  &\leq \frac12\Vert F'' \Vert_{L^\infty}\left( 2\ve \Vert \varphi \Vert_{L^\infty(D_\ve)} + 2\Vert \eta \Vert_{L^\infty(D_\ve)} +  \Vert u \Vert_{L^\infty(D_\ve)} + \Vert v \Vert_{L^\infty(D_\ve)} \right) \Vert u - v \Vert_{L^\infty(D_\ve)} \\ 
&\quad +\frac12 \Vert F'''\Vert_{L^\infty}\left( \ve^2 \Vert \varphi \Vert_{L^\infty(D_\ve)}^2 +  \Vert \eta \Vert_{L^\infty(D_\ve)}^2  + \Vert v \Vert_{L^\infty(D_\ve)}^2 \right) \Vert u - v \Vert_{L^\infty(D_\ve)},
\end{align*}
so that, for $\ve$ sufficiently small, we deduce
\begin{align*}
\Vert \N(u) - \N(v) \Vert_{L^\infty(D_\ve)}  &\leq \frac{1}{4c_0}\Vert u - v \Vert_{L^\infty(D_\ve)}.
\end{align*}
On the other hand, for the $C^{0,\alpha}$ semi-norm, we have
\begin{align*}
\left[ \N(u) - \N(v) \right]_{C^\alpha} \leq \Vert\mathcal{H}_\ve(u,v)\Vert_{L^\infty}\left[ u - v \right]_{C^\alpha} + \Vert u-v \Vert_{L^\infty} \left[ \mathcal{H}_\ve(u,v) \right]_{C^\alpha}
\end{align*}
where we recall $\Vert \mathcal{H}_\ve (u,v) \Vert_{L^\infty(D_\ve)}  \leq \frac{1}{4c_0}$ while now, since $\Vert \ve\varphi + \eta_\ve \Vert_{C^{0,\alpha}(D_\ve)}<1$, and also $\Vert u \Vert_{C^{0,\alpha}(D_\ve)}<1$ and $\Vert v \Vert_{C^{0,\alpha}(D_\ve)} < 1$, there holds
\begin{align*}
\left[ \mathcal{H}_\ve(u,v) \right]_{C^\alpha} &\leq C \left( \Vert \ve \varphi + \eta_\ve \Vert_{C^{0,\alpha}(D_\ve)} +  \Vert u \Vert_{C^{0,\alpha}(D_\ve)} + \Vert v \Vert_{C^{0,\alpha}(D_\ve)} \right) 
\end{align*}
for some $C>0$, for all $\ve\in (0,1)$. In particular, for $\ve>0$ small enough, we have that 
\begin{align*}
\left[ \mathcal{H}_\ve(u,v) \right]_{C^\alpha} &\leq \frac{1}{4c_0},
\end{align*} 
for all $u\in X_\ve$. With this, we conclude that $\left[ \N(u) - \N(v) \right]_{C^\alpha} \leq \frac{1}{2c_0}\Vert u - v \Vert_{C^{0,\alpha}(D_\ve)}$, and together with the $L^\infty$ estimate we obtain 
\begin{align*}
 \Vert \widetilde{K}_\ve(u) - \widetilde{K}_\ve(v) \Vert_{C^{2,\alpha}(D_\ve)} \leq c_0 \Vert \N(u) - \N(v) \Vert_{C^{0,\alpha(D_\ve)}} \leq \frac12\Vert u - v \Vert_{C^{0,\alpha}(D_\ve)},
\end{align*}
for $\ve>0$ small enough. Further restricting to a smaller $\ve$ if necessary, this shows that $\widetilde{K}_\ve$ is a contraction in $X_\ve$.
\end{proof}

\begin{proof}[Proof of Proposition \ref{prop:boundsuve}]
From Lemma \ref{lemma:invariantX} and Lemma \ref{lemma:contraction} there exists $\ve_*>0$ such that $\widetilde{K}_\ve:X_\ve \rightarrow X_\ve$ and it is a contraction for all $0 < \ve <\ve_*$. As a result, for all $\ve\in (0,\ve_*)$, by the Banach fixed point theorem, there exists a unique $u_\ve\in X_\ve$ with $\Vert u_\ve \Vert_{C^{0,\alpha}(D_\ve)}\leq \ve$ and such that $\widetilde{K}_\ve(u_\ve) = u_\ve$. In particular, from \eqref{eq:usefulN}, we observe that
\begin{align*}
\Vert u_\ve \Vert_{C^{2,\alpha}} \leq c_0 \left( \Vert \N(u_\ve) \Vert_{C^{0,\alpha}(D_\ve)} + \Vert \beta_\ve \Vert_{C^{0,\alpha}(D_\ve)}\right) \lesssim \ve^2 \left( \Vert F \Vert_{C^3} + \Vert h \Vert_{C^{2,\alpha}} + \Vert g \Vert_{C^{2,\alpha}} \right)
\end{align*}
and the proof is concluded.
\end{proof}

 \subsection*{Acknowledgments}   We thank Tarek Elgindi and Yupei Huang for useful discussions.
The research of TDD was partially supported by the NSF DMS-2106233 grant, NSF CAREER award \#2235395, a Stony Brook University Trustee's award as well as an Alfred P. Sloan Fellowship. The research of
DG was partially supported by a startup grant from Brooklyn College, PSC-CUNY grant TRADB-55-214, and
the NSF grant DMS-2406852.
The research of MN was partially supported by the European Research Council (ERC) under the European Union’s Horizon 2020 research and innovation programme through the grant agreement 862342.

\bibliographystyle{abbrv}
\bibliography{refs}

\end{document}